\numberwithin{equation}{section}
\def\divides{{\mathchoice{\mathrel{\bigm|}}{\mathrel{\bigm|}}{\mathrel{|}}{\mathrel{|}}}}
\def\notdivides{\mathrel{\kern-3pt\not\!\kern3.5pt\bigm|}}
\def\smallnotdivides{\mathrel{\kern-2pt\not\!\kern3.5pt\vert}}
\newcommand\dirichlet{\mathsf d}
\newcommand{\fix}{\operatorname{\mathsf F}}
\newcommand{\orbit}{\operatorname{\mathsf O}}
\newcommand{\mertens}{\operatorname{\mathsf M}}
\newcommand{\aut}{\operatorname{Aut}}
\newcommand{\ord}{\operatorname{ord}}
\newcommand{\module}{\operatorname{mod}}
\DeclareMathOperator{\genlin}{GL}
\theoremstyle{plain}
\newtheorem{theorem}{Theorem}
\newtheorem{lemma}[theorem]{Lemma}
\newtheorem{corollary}[theorem]{Corollary}
\newtheorem{proposition}[theorem]{Proposition}%
\theoremstyle{definition}
\newtheorem{definition}[theorem]{Definition}
\theoremstyle{remark}
\newtheorem{example}[theorem]{Example}
\newtheorem{problem}{Problem}
\renewcommand{\le}{\leqslant}
\renewcommand{\ge}{\geqslant}
\renewcommand{\epsilon}{\varepsilon}
\def\bigo{\operatorname{O}}    
\def\T{\mathbb T}
\def\({\left(}\def\){\right)}
\begin{document}

\title{Dynamical invariants for group automorphisms}

\author{Richard Miles}
\address{[RM] School of Mathematics, University of East Anglia, Norwich NR4 7TJ, UK}

\author{Matthew Staines}
\address{[MS] School of Mathematics, University of East Anglia, Norwich NR4 7TJ, UK}

\author{Thomas Ward}
\address{[TW] Department of Mathematical Sciences, Durham University, DH1 3LE, UK}
\email{t.b.ward@durham.ac.uk}

\keywords{topological entropy, zeta function, group automorphism, Mertens' theorem, orbit Dirichlet series.}

\subjclass[2010]{Primary 37C35; Secondary 37P35, 11J72.}

\begin{abstract}
We discuss some of the issues that arise in attempts to classify
automorphisms of compact abelian groups from a dynamical
point of view. In the particular case of automorphisms of
one-dimensional solenoids, a complete description is given
and the problem of determining the range of certain invariants
of topological conjugacy is discussed. Several new
results and old and new open problems are described.
\end{abstract}

\maketitle

\section{Introduction}
\label{Suffering long time angels enraptured by Blake}

Automorphisms and rotations
of compact abelian metric groups provide the simplest
examples of dynamical systems, and their special structure makes
them amenable to detailed analysis. In particular, their strong
homogeneity properties allow global invariants (notably the
topological entropy) to be calculated locally.

Here we wish to discuss the problem of classifying
compact group automorphisms viewed
as dynamical
systems in the most naive sense. Denote by~$\mathcal{G}$ the space
of all pairs~$(G,T)$, where~$T$ is an automorphism of
a compact metric abelian group~$G$, and let~$\sim$ denote a dynamically
meaningful notion of equivalence between two such systems.
Then the problem we wish to discuss has three facets:
\begin{itemize}
\item \emph{Classification}: can the quotient
space~$\mathcal{G}/\mathord\sim$ be described?
\item \emph{Range}: for some invariant~$\pi$
of such systems, what is
\[
\{\pi(G,T)\mid(G,T)\in\mathcal{G}\}?
\]
\item \emph{Fibre}: for a given value~$f$ of some invariant
or collection of invariants~$\pi$,
what can be said about
\[
\{(G,T)\in\mathcal{G}\mid\pi(G,T)=f\}?
\]
\end{itemize}
We will describe some of the issues that arise in
formulating these questions, recalling in particular some
of the well-known difficulties in the classification problem
both from an ergodic theory and a set theory
point of view.
For the range problem we will describe some recent work and
present some new arguments concerned with orbit growth in
particular. Many but not all of the problems identified here are
well-known.

The same questions make sense for group actions. Given a countable
group~$\Gamma$, there is an associated space~$\mathcal{G}_{\Gamma}$
comprising all pairs~$(G,T)$, where~$T:\Gamma\to\aut(G)$ is
a representation of~$\Gamma$ by automorphisms of the
compact metric abelian group~$G$. While our emphasis
is not on any setting beyond~$G=\mathbb Z$, one or two
examples will be given to indicate where the theory
has new features for other group actions.

An element~$(G,T)\in\mathcal{G}$ (by assumption) carries
a rotation-invariant metric giving the
topology, a~$\sigma$-algebra~$\mathcal{B}_G$ of Borel sets,
and a probability measure (Haar measure)~$m_G$ preserved by~$T$.
The most natural notion of equivalence
between~$(G_1,T_1)$ and~$(G_2,T_2)$
in~$\mathcal{G}$
is a commutative diagram
\[
\begin{CD}\label{...}
G_1@>T_1>>G_1\\
@V\phi VV @ VV\phi V\\
G_2@>>T_2>G_2\\
\end{CD}
\]
where the equivariant map~$\phi$ is required to be:
\begin{itemize}
\item a continuous isomorphism of groups, giving the equivalence of \emph{algebraic isomorphism};
\item a homeomorphism, giving the equivalence of \emph{topological conjugacy}; or
\item an almost-everywhere defined isomorphism between the measure spaces
\[
(G_1,\mathcal{B}_{G_1},m_{G_1})
\]
and
\[
(G_2,\mathcal{B}_{G_2},m_{G_2}),
\]
giving the equivalence of
\emph{measurable isomorphism}.
\end{itemize}
Clearly algebraic isomorphism implies topological conjugacy.
Entropy arguments show that topological conjugacy implies
measurable isomorphism. An easy consequence of the type of constructions
discussed later --- and an instance of the type of question one
might ask about the structure of~$\mathcal{G}/\mathord\sim$ --- is that each measurable
isomorphism equivalence class splits into uncountably many topological
conjugacy classes.
We have not included the
equivalence relation that is in some ways the most natural in
dynamics (\emph{finitary isomorphism}, essentially
measurable isomorphism by a map that is continuous off an
invariant
null set) because there is little that can be said about it
beyond those examples where it is almost self-evident how
to construct such a map.

We will also be interested in various invariants for~$(G,T)\in\mathcal{G}$.
The most significant of these is the entropy,
\begin{equation}\label{equation:entropy}
h(T)=\lim_{\epsilon\searrow0}\lim_{n\to\infty}
-\frac{1}{n}\log m_{G}\left(
\bigcap_{j=0}^{n-1}T^{-j}B_{\epsilon}(0)
\right)
\end{equation}
where~$B_{\epsilon}(0)$ denotes an open metric ball around the
identity of~$G$. A manifestation of the homogeneity of the
dynamics of group automorphisms is that this quantity
coincides with the topological entropy and with the
measure-theoretic entropy with respect to Haar measure (see Bowen~\cite{MR0274707}
or~\cite{ELW}), and is therefore an invariant of
any of the notions of equivalence above.

Topological conjugacy also preserves closed orbits
and periodic points. This combinatorial data is all contained in the
dynamical zeta function
\begin{equation}\label{equation:zetafunction}
\zeta_{T}(z)=\exp\sum_{n\ge1}\frac{\fix_T(n)}{n}{z^n},
\end{equation}
where~$\fix_T(n)=\vert\{g\in G\mid T^ng=g\}\vert$,
with radius of convergence
\[
1/\limsup_{n\to\infty}\fix_T(n)^{1/n}.
\]

\begin{definition}\label{definition:handzetaequivalence}
For elements~$(G,T),(G',T')\in\mathcal{G}$ write
\[
(G,T)\sim_{h}(G',T')
\]
if~$h(T)=h(T')$, and
\[
(G,T)\sim_{\zeta}(G',T')
\]
if~$\zeta_{T}(z)=\zeta_{T'}(z)$
as formal power series.
\end{definition}

The primary growth rate measures are
\[
p(T)=\lim_{n\to\infty}\frac{1}{n}\log\fix_T(n),
\]
if this exists (we
write~$p^{+}(T)$ and~$p^{-}(T)$
for the upper and lower limits in general)
and
\[
\pi_T(X)=\sum_{n\le X}\orbit_T(n),
\]
where~$\orbit_T(n)$ denotes the number of closed
orbits of length~$n$.
As we will see, for many (in
several precise senses, most) examples
the limit defining~$p$ will not exist, and a more averaged measure of
orbit growth comes from the dynamical Mertens' theorem,
giving asymptotics for the quantity
\[
\mertens_T(N)=\sum_{n\le N}\frac{\orbit_T(n)}{{\rm e}^{h(T)n}}
=\sum_{n\le N}\sum_{d\divides n}\frac{\mu(n/d)\fix_T(d)}{n{\rm e}^{h(T)n}},
\]
where the expression on the right is simply obtained by
M\"obius inversion, as
\[
\fix_T(n)=\sum_{d\divides n}d\orbit_T(n).
\]

In general, for an action~$T$ of a countable group~$\Gamma$,
the periodic point data for the action is a map
from the space of finite-index subgroups of~$\Gamma$
to~$\mathbb N\cup\{\infty\}$,
associating to each subgroup~$L<\Gamma$
the cardinality of the set of~$L$-periodic points
\begin{equation}\label{We skipped the light fandango}
\fix_{T}(L)=\{x\in X\mid T_{\ell}x=x\mbox{ for all }\ell\in L\}.
\end{equation}
There is an extension of the dynamical zeta function
to the setting of~$\mathbb Z^d$-actions due to Lind~\cite{MR1411232}, but
it does not carry all the periodic point data
unless~$d=1$. Some of the questions considered
here have been studied for full-shift actions
of nilpotent groups in~\cite{MR2465676},
and for specific examples of~$\mathbb Z^2$-actions
by automorphisms of compact connected groups
in~\cite{MR2650793}.

Several of the questions we will discuss
involve adopting a point of view on what is a
natural family of dynamical systems, and what
is a dynamically meaningful notion of equivalence.
In this context
it is reasonable to restrict attention to ergodic
group automorphisms throughout, and in some cases
the more interesting questions arise if we
also insist that the group be connected.
From this point of view the natural starting point
is to study automorphisms of one-dimensional
solenoids, and much of what we will discuss
will be concerned with these. A remarkable consequence
of the work of Markus and Meyer is that
all of these seemingly exotic
one-dimensional
solenoids appear generically in Hamiltonian flows on compact
symplectic manifolds of
sufficiently high dimension~\cite{MR556887}
(we thank Alex Clark for pointing this out). Thus,
despite the seemingly
strange phenomena and
arithmetic questions that arise here, the dynamical
systems discussed here arise naturally in smooth
dynamical situations. Another reason to restrict
much of our attention to one-dimensional
solenoids is cowardice. In one natural direction of
extension, the $S$-integer construction builds
families of compact group automorphisms starting
with algebraic number fields or function fields
of positive characteristic (see~\cite{MR1461206} for the
details), and the arithmetic questions
that arise become more intricate but are
in principle amenable
to the same methods in those settings. Another
natural direction is to simply replace `one'
by~$d\ge2$, and study automorphisms of~$d$-dimensional
solenoids. In this complete generality the difficulties
are more formidable, and this is discussed
further at the end of Section~\ref{subsectionsubgroupsoftherationals}.

The assumption of ergodicity is a mild requirement
and avoids degeneracies -- automorphisms of compact abelian groups
cannot be non-ergodic in dynamically interesting ways.
Ergodicity guarantees (completely) positive entropy,
and the structure of compact group automorphisms
with zero entropy has been described by Seethoff~\cite{MR2617673}.

\section{Algebraic isomorphism}

Deciding if~$(G_1,T_1)$ and~$(G_2,T_2)$ are algebraically
isomorphic is not a dynamical question at all. The equivalence
implies that~$\phi:G_1\to G_2$ is an isomorphism of groups, so
we are asking if the groups are isomorphic to a single
group~$G$, and then if the two elements~$T_1$
and~$\phi^{-1}T_2\phi$ are conjugate as elements of the
group~$\aut(G)$ of continuous automorphisms of~$G$. If the
systems are expansive (or satisfy the descending chain
condition) and connected, then the problem of determining
conjugacy in the automorphism group is algorithmically
decidable by a result of Grunewald and Segal~\cite{MR805682}
(see Kitchens and Schmidt~\cite[Sec.~6]{MR1036904}). In general
this cannot be expected: if the group is infinite-dimensional
then elements of the automorphism group in general correspond
to matrices of infinite rank.

\begin{example}\label{example:torus}
Toral automorphisms are conjugate if the
corresponding matrices have the same characteristic polynomial
and share an additional number-theoretic invariant
related to ideal classes in the splitting field
of the characteristic polynomial (see
Latimer and Macduffee~\cite{MR1503108} for
the details). This additional invariant is
already visible even on the~$2$-torus. For example,
if
\[
A=\begin{pmatrix}3&10\\1&3\end{pmatrix},
B=\begin{pmatrix}3&5\\2&3\end{pmatrix}
\]
then it is easy to check that if~$Q\in{\rm M}_{2}(\mathbb Z)$
has~$QA=BQ$ we have~$5\divides\det Q$.
\end{example}

A flavour of the sort of examples that will arise in this
setting is given by the next problem.

\begin{problem}\label{splittingdiagram}
Given a monic polynomial~$\chi\in\mathbb Z[x]$ of degree~$d$, describe
the combinatorial properties of the following
poset. At the bottom level~$n=0$ there is a
vertex for each
of the finitely many
algebraic
conjugacy classes of the
set
\[
M_{\chi}=\{A\in{\rm GL}_d(\mathbb Z)\mid\det(tI_d-A)=\chi(t)\},
\]
which may be enumerated using data from the ideal
class groups of the splitting fields of the
various irreducible factors of~$\chi$ using~\cite{MR1503108}.
At level~$n\ge1$ there is a
vertex for each of
the conjugacy classes of the elements of~$M_{\chi}$
over the ring~$\mathbb Z[\frac{1}{p_j}:j\le n]$ where~$p_1,p_2,\dots$
are the rational primes in their natural order.
The edges reflect conjugacy classes merging
in the larger group at each level, and the top of
the poset has a vertex for each conjugacy class
of the set~$M_{\chi}$ in~${\rm GL}_d(\mathbb Q)$,
which are described by the rational canonical
forms of matrices with characteristic polynomial~$\chi$.
\end{problem}

For example, Problem~\ref{splittingdiagram} is readily solved
for Example~\ref{example:torus} as follows. The
polynomial~$\chi(x)=x^2-6x-1$ is irreducible over~$\mathbb Q$
with splitting field~$\mathbb Q(\sqrt{10})$, with ideal class
group isomorphic to~$\mathbb Z/2\mathbb Z$. The representative
matrices in Example~\ref{example:torus} are not conjugate
over~$\mathbb Z[\frac12]$ and~$\mathbb Z[\frac16]$, but become
conjugate over~$\mathbb Z[\frac{1}{30}]$, resulting in the
diagram in Figure~\ref{figure:splitting}.

\begin{figure}[htbp]\begin{center}
\psfrag{0}{$n=0$}\psfrag{1}{$1$}\psfrag{2}{$2$}\psfrag{3}{$3$}\psfrag{4}{$4$}
\scalebox{1.0}[1.0]{\includegraphics{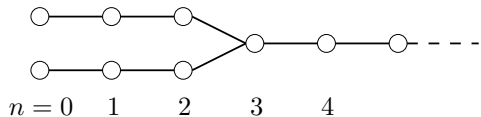}}
\caption{\label{figure:splitting}Conjugacy for the
characteristic polynomial $x^2-6x-1$.}
\end{center}
\end{figure}

\section{Topological conjugacy}

As we will see, for connected groups this notion of
equivalence collapses to algebraic isomorphism. In fact
there is a rigidity phenomenon at work, which means
that in many situations not only does the existence of
a topological conjugacy force there to be an algebraic
isomorphism, it is required to be algebraic itself.

\begin{theorem}[Adler and Palais~\cite{MR0193181}]\label{theoremadlerpalais}
If~$T:\T^d\to\T^d$ and~$S:G\to G$ are topologically
conjugate ergodic automorphisms via a
homeomorphism~$\phi$ from~$\T^d$ to~$G$, then~$G\cong\T^d$ and the
map~$\phi$ is itself an automorphism of~$\T^d$ composed
with rotation by a fixed point of~$S$.
\end{theorem}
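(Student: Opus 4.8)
The plan is to exploit the homogeneity of~$\T^d$ together with the rigidity of the conjugating homeomorphism. First I would normalize: replacing~$\phi$ by~$x\mapsto\phi(x)-\phi(0)$ does not affect the property of being a topological conjugacy between~$T$ and a conjugate of~$S$ (since translations commute with... no — rather, if~$\phi$ conjugates~$T$ to~$S$, then~$\psi(x)=\phi(x)-\phi(0)$ conjugates~$T$ to~$y\mapsto S(y+\phi(0))-\phi(0)$, which differs from~$S$ by translation by the~$S$-fixed point~$\phi(0)-S\phi(0)$ composed with conjugation... I should be careful here). So the first real step is to reduce, via a translation of the target group, to the case~$\phi(0)=0$ and show that the residual translation is by a fixed point of~$S$; the fixed-point condition appears precisely because conjugating~$S$ by a translation~$t_a$ gives~$t_a S t_{-a}=t_{a-Sa}S$, which equals~$S$ iff~$a=Sa$. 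This accounts for the "rotation by a fixed point" clause, and reduces everything to proving: a \emph{base-point-preserving} topological conjugacy between~$T\in\aut(\T^d)$ and~$S\in\aut(G)$ is forced to be a continuous group isomorphism.

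The heart of the matter is then to promote a pointed homeomorphism to a homomorphism. The key observation is that~$\T^d$ is connected, locally connected, and finite-dimensional, and that an ergodic automorphism of such a group has a dense set of periodic points whose combinatorial structure (the full system of finite subgroups~$\fix_T(n)$) is transported isomorphically by~$\phi$. The strategy I would follow is: (i) show~$G$ is connected (a homeomorphic image of a connected space), finite-dimensional, and abelian — abelianness being the subtle point, extracted from the lattice structure of the periodic-point subgroups, which are finite abelian groups sitting densely and coherently inside~$G$; (ii) realize the universal cover: lift~$\phi$ to a homeomorphism~$\tilde\phi\colon\mathbb R^d\to\widetilde G$ of universal covers intertwining the lifts~$\tilde T,\tilde S$ of the automorphisms; (iii) on the level of covers, use that~$\tilde T$ is linear (a matrix in~$\genlin_d(\mathbb Z)$ up to the covering data) and ergodicity means it has no eigenvalues that are roots of unity, hence a hyperbolic-like splitting, to force~$\tilde\phi$ to be affine — this is where the Adler–Palais argument proper lives, using an averaging or a contraction-mapping argument along the stable and unstable directions of~$\tilde T$. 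Concretely: the set of points with bounded~$\tilde T$-orbit in both time directions is exactly the lattice~$\mathbb Z^d$ (since~$\tilde T$ has no unit eigenvalues, only the fixed lattice points of the induced structure stay bounded), $\tilde\phi$ must carry this set bijectively to the corresponding deck-transformation lattice of~$\widetilde G$, and combined with continuity and equivariance this pins~$\tilde\phi$ down to a linear map on a dense subgroup, hence everywhere.

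The main obstacle, and the step I would allocate the most care to, is establishing that~$G$ must be algebraically a torus at all — i.e.\ that the abstract topological group~$G$ carrying an ergodic automorphism and homeomorphic to~$\T^d$ is in fact isomorphic to~$\T^d$ as a topological group before any rigidity of~$\phi$ is invoked. One clean route is via Pontryagin duality: the dual~$\widehat G$ is a discrete abelian group, and the homeomorphism constraints plus the periodic-point count~$\fix_T(n)=\lvert\det(\tilde T^n-I)\rvert$ (inherited from~$T$) force~$\widehat G$ to be finitely generated and torsion-free of rank~$d$, hence~$\widehat G\cong\mathbb Z^d$ and~$G\cong\T^d$; the automorphism~$S$ then dualizes to an integer matrix. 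Once~$G$ is known to be~$\T^d$, the problem is symmetric in~$T$ and~$S$ and the covering-space linearization argument of the previous paragraph applies. I would expect the write-up to spend most of its length on this duality/finite-generation step, with the linearization of~$\tilde\phi$ being comparatively standard once the stable/unstable decomposition of the ergodic (hence hyperbolic-on-the-cover after passing to the relevant quotient) automorphism is in hand.
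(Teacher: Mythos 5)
The paper gives no proof of this theorem---it is quoted from Adler and Palais---so your proposal has to stand on its own, and as written it has two genuine gaps. The serious one is in your step (iii), where you conflate ergodicity with hyperbolicity. Ergodicity of a toral automorphism means only that the matrix $A$ has no eigenvalues that are roots of unity; it does not exclude eigenvalues of modulus one (such examples exist already for $d=4$), so in general there is no stable/unstable splitting spanning $\mathbb R^d$, and a contraction argument ``along the stable and unstable directions'' says nothing about the central directions, which is exactly where the content of the theorem lies. Relatedly, your claim that the points of $\mathbb R^d$ with bounded two-sided $\tilde T$-orbit form the lattice $\mathbb Z^d$ is false: for a hyperbolic matrix that set is $\{0\}$ (nonzero lattice points have unbounded orbits), and in the non-hyperbolic ergodic case it is the whole central subspace. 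The standard repair is: use the induced map on $\pi_1$ (or $\check H^1$) to write $\phi=\alpha+h$ with $\alpha$ algebraic and $h\colon\T^d\to\mathbb R^d$ continuous, derive from $\phi T=S\phi$ a functional equation $Ah(x)-h(Ax)=c$, absorb $c$ by a translation (possible because $1$ is not an eigenvalue; this is precisely where the rotation by a fixed point of $S$ appears), and then kill $h$ by Fourier analysis: the relation $\hat h(({}^tA)^k m)=A^k\hat h(m)$, the fact that ergodicity forces every nonzero character orbit to be infinite (hence unbounded in both time directions), and Riemann--Lebesgue together give $\hat h(m)=0$ for all $m\neq0$. Your splitting argument only reproves the hyperbolic case.

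The second gap is the identification $G\cong\T^d$. Finiteness of the counts $\fix_S(n)$ cannot force $\widehat G$ to be finitely generated: the one-solenoids of Section~5 of this paper (Proposition~\ref{fixedpointformula}) have finite fixed-point counts for every $n$, yet their duals are non-finitely-generated subgroups of $\mathbb Q$. What does the work is the homeomorphism itself: Čech cohomology is a homeomorphism invariant and $\check H^1(G;\mathbb Z)\cong\widehat G$ for a compact connected abelian group, so $\widehat G\cong\check H^1(\T^d;\mathbb Z)\cong\mathbb Z^d$ directly (equivalently: $G$ is compact, connected, locally connected and $d$-dimensional, which characterizes $\T^d$ among compact abelian groups). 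Note also that in the setting of this paper every $G\in\mathcal G$ is abelian by assumption, so your attempt to extract commutativity from the ``lattice of periodic-point subgroups'' is unnecessary---and it would not work anyway, since $\phi$ is not known to be a homomorphism at that stage, so it transports $\fix_T(n)$ only as a set, not as a group. Your normalization bookkeeping in the first paragraph is essentially fine; with the cohomological identification of $G$ and the Fourier argument replacing the hyperbolic splitting, the outline becomes a correct proof.
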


This extends in several ways to other connected
finite-dimensional groups, and to other group
and semi-group actions (see Clark and Fokkink~\cite{MR2308207}
and Bhattacharya and the third author~\cite{MR2129101}). Notice that
the phenomena is not universal, and in particular
requires the topological entropy to be finite.
The next example shows that in infinite entropy situations
there is no topological rigidity.

\begin{example}
If~$\theta:\mathbb T\rightarrow\mathbb T$ is any
homeomorphism, then the map~$\Theta:\mathbb T^{\mathbb Z}\rightarrow
\mathbb T^{\mathbb Z}$ defined by~$(\Theta\((x_n)_{n\in\mathbb Z}\))_k=\theta(x_k)$
for all~$k\in\mathbb Z$ is a topological conjugacy between
the left shift on~$\mathbb T^{\mathbb Z}$ and itself.
\end{example}

\begin{example}\label{exampletopzerodimensional}
In contrast to the topological
rigidity in Theorem~\ref{theoremadlerpalais}
(as pointed out by Kitchens and Schmidt~\cite{MR1036904})
topological conjugacy on zero-dimensional groups
preserves very little of the algebraic structure.
\begin{enumerate}
\item[\rm(1)]
A finite abelian group~$G$ has an associated shift
automorphism~$(G^{\mathbb Z},\sigma_G)$ where~$\(\sigma((x_n))\)_{k}=x_{k+1}$
for all~$k\in\mathbb Z$. Clearly~$(G^{\mathbb Z},\sigma_G)$
will be topologically conjugate to~$(H^{\mathbb Z},\sigma_H)$
if and only if~$\vert G\vert=\vert H\vert$.
Thus no part of the internal structure of the group
alphabet is preserved by topological conjugacy.
\item[\rm(2)]
Kitchens~\cite{MR896796} proves that if~$G$ is zero-dimensional
and~$T$ is expansive, meaning that there is an open neighbourhood~$U$
of the identity with
\[
\bigcap_{n\in\mathbb Z}T^{-n}U=\{e\},
\]
and has a dense orbit, then~$(G,T)$ is topologically conjugate to a
full shift on~${\rm e}^{h(T)}$ symbols.
In the same paper the problem of classifying
such automorphisms up to simultaneous algebraic
isomorphism and topological conjugacy is also addressed
but not completely solved. It is shown that for any given
entropy there are only finitely many equivalence classes
for simultaneous algebraic isomorphism and algebraic
conjugacy.
If the entropy is~$\log p$ for some prime~$p$ then
it is shown that there is only one equivalence class,
namely that containing~$(\mathbb F_p^{\mathbb Z},\text{shift})$.
\item[\rm(3)]
For other group actions entirely new rigidity phenomena appear.
Given a finite abelian group~$G$, we may define an
action~$\sigma_G$ of~$\mathbb Z^2$ on the compact group
\[
X_G=\{x=(x_{n,m})\in G^{\mathbb Z^2}\mid
x_{n,m}+x_{n+1,m}=x_{n,m+1}\mbox{ for all }n,m\in\mathbb Z\}
\]
by shifting. R{\"o}ttger~\cite{MR2141759},
extending a partial result in~\cite{MR1631042}, shows that the periodic
point data in the sense of~\eqref{We skipped the light fandango}
for the system determines the structure of~$G$, and so
in particular if~$(X_G,\sigma_G)$ is topologically
conjugate to~$(X_H,\sigma_H)$ then~$G\cong H$.
\item[\rm(4)]
For the case~$G=\mathbb Z/2\mathbb Z$ in the setting of~$X_G$
as in~(3), the only homeomorphisms~$X_G\to X_G$ commuting
with the action of~$\sigma_G$ are elements of the action itself
by~\cite{MR1307966}.
\end{enumerate}
\end{example}

\begin{problem}[Kitchens~\cite{MR896796}]
Classify expansive ergodic automorphisms of compact
abelian zero-dimensional groups up to simultaneous
algebraic isomorphism and topological conjugacy.
\end{problem}

Example~\ref{exampletopzerodimensional}(3) is associated in a
natural way to the polynomial
\[
1+u_1-u_2\in\mathbb Z[u_1^{\pm1},u_2^{\pm1}]
\]
since the condition defining the elements of~$X_G$
may be thought of as requiring that the element of~$G^{\mathbb Z^2}$
is annihilated by~$1+u_1-u_2$, with~$u_1$ corresponding to a shift
in the first coordinate and~$u_2$ to a shift in the second
coordinate.

\begin{problem}
Associate to a polynomial~$f\in\mathbb Z[u_1^{\pm1},u_2^{\pm1}]$
and a finite abelian group~$G$ the shift action~$\sigma_{G,f}$
on the space~$X_{G,f}$ of elements of~$G^{\mathbb Z^2}$
annihilated by~$f$. Does the periodic point data
for~$\sigma_{G,f}$ determine the structure of~$G$?
\end{problem}

\section{Measurable isomorphism}

This is an opaque equivalence relation,
because as measure spaces any two infinite compact
metric abelian groups are isomorphic. A consequence
of the homogeneity mentioned above is that
the entropy~$h(T)$ is an invariant of measurable
isomorphism. A much deeper fact is that there is
an abstract model
as an independent identically distributed
stochastic process
for any ergodic group automorphism.
These independent identically distributed processes -- Bernoulli shifts -- in turn are
classified up to measurable isomorphism by their entropy (see Theorem~\ref{theoremornstein} below).

\begin{example}
Example~\ref{exampletopzerodimensional} gave some
instances of topological rigidity for actions of
larger groups. There is a remarkable theory of
rigidity for measurable isomorphism of algebraic~$\mathbb Z^d$
actions, and we refer to the survey of Schmidt~\cite{MR1858549}
and the references therein for this.
\end{example}

\begin{theorem}\label{theorem:groupautomorphismsandBernoullishifts}
Let~$(G,T)\in\mathcal{G}$ be an ergodic
compact group automorphism. Then there is
a probability vector~$(p_1,p_2,\dots)$
defining a measure~$\mu_p$ on~$\mathbb N$
with the property that~$(G,T)$ is measurably
isomorphic to the shift map~$\sigma$ on the
space~$\mathbb N^{\mathbb Z}$ preserving the
product measure~$\prod_{n\in\mathbb Z}\mu_p.$
Moreover,~$h(T)$ is equal to the measure-theoretic
entropy~$\sum_{n\in\mathbb N}p_n\log p_n$
of~$\sigma$ with respect to this measure. If~$h(T)<\infty$
then we may assume that~$p_k=0$ for all but finitely many~$k$,
or equivalently we may replace the alphabet~$\mathbb N$
with a finite set.
\end{theorem}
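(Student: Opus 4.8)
The plan is to deduce the statement from two results: that every ergodic compact group automorphism is measurably isomorphic to a Bernoulli shift, and Ornstein's theorem (Theorem~\ref{theoremornstein}) that a Bernoulli shift is determined up to measurable isomorphism by its entropy. The first of these carries all the weight. As already observed, ergodicity of~$T$ forces completely positive entropy; more is true, and every ergodic automorphism of a compact metrizable abelian group is measurably isomorphic to a Bernoulli shift over a standard probability space. This rests on work of Katznelson for the torus, Lind for more general compact abelian groups, and subsequent refinements, the mechanism being the verification of a very weak Bernoulli property followed by an appeal to Ornstein's isomorphism theorem. Granting this, fix a measurable isomorphism of~$(G,T)$ with the shift on~$(Y,\nu)^{\mathbb Z}$ for some standard probability space~$(Y,\nu)$; since measurable isomorphism preserves entropy, the entropy of this Bernoulli shift equals~$h(T)$, and since~$T$ is ergodic we have~$h(T)>0$.

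The second step is to realise~$h(T)$ as the Shannon entropy~$-\sum_{n}p_n\log p_n$ of a probability vector on~$\mathbb N$. If~$h(T)<\infty$, fix~$k\in\mathbb N$ with~$\log k\ge h(T)$ and consider, for~$0\le t\le (k-1)/k$, the vector
\[
p(t)=\bigl(1-t,\ \tfrac{t}{k-1},\ \dots,\ \tfrac{t}{k-1}\bigr)
\]
on~$k$ symbols; its Shannon entropy depends continuously on~$t$ and runs from~$0$ at~$t=0$ to~$\log k$ at~$t=(k-1)/k$, so by the intermediate value theorem some choice of~$t$ gives entropy exactly~$h(T)$, and the resulting vector~$(p_1,\dots,p_k,0,0,\dots)$ is finitely supported. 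If~$h(T)=\infty$, take instead a slowly decaying vector such as~$p_n=c\,n^{-1}(\log(n+1))^{-2}$, where~$c$ is the normalising constant; then~$\sum_n p_n<\infty$ while~$-\sum_n p_n\log p_n=\infty$, so the associated Bernoulli shift has infinite entropy.

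Let~$\mu_p$ be the measure on~$\mathbb N$ given by the vector just constructed, and let~$\sigma$ be the shift on~$\mathbb N^{\mathbb Z}$ preserving~$\prod_{n\in\mathbb Z}\mu_p$. Then~$\sigma$ is a Bernoulli shift whose entropy is~$-\sum_n p_n\log p_n=h(T)$, so by Theorem~\ref{theoremornstein} it is measurably isomorphic to the Bernoulli shift obtained in the first step, and hence to~$(G,T)$; the entropy assertion is then immediate. When~$h(T)<\infty$ the vector was chosen with finite support, so the alphabet~$\mathbb N$ may be replaced by a finite set, as claimed.

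The main obstacle lies entirely in the first step: upgrading ergodicity of a compact group automorphism to the Bernoulli property is a genuinely deep theorem, whereas the construction of the vector and the concluding application of Ornstein's theorem are routine.
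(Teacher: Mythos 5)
Your proposal is correct and follows essentially the same route as the paper, which likewise rests all the weight on the Katznelson--Lind (and Miles--Thomas, Aoki) theorem that ergodic compact group automorphisms are measurably isomorphic to Bernoulli shifts, combined with Ornstein's isomorphism theorem, citing these results rather than reproving them. Your extra steps --- realising $h(T)$ as the Shannon entropy of a probability vector on $\mathbb N$ (finitely supported when $h(T)<\infty$, of infinite entropy otherwise) and then invoking Theorem~\ref{theoremornstein} --- are precisely the routine filling-in that the paper leaves implicit.
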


This was proved by Katznelson~\cite{MR0294602} in the case~$G=\mathbb T^d$,
and shown in general by Lind~\cite{MR0460593}
and independently by Miles and Thomas~\cite{MR517262} and
Aoki~\cite{MR605377}, using abstract characterizations of the
property of being measurably isomorphic to a Bernoulli
shift developed by Ornstein~\cite{MR0447525}.
It means that an ergodic compact
group automorphism is measurably indistinguishable from
an abstract independent identically distributed stochastic
process. Notice that the space~$A^{\mathbb Z}$ for a
finite alphabet~$A$ is a compact metric space in
a natural way, so the question of whether or not the
measurable isomorphism in Theorem~\ref{theorem:groupautomorphismsandBernoullishifts}
may be chosen to be finitary -- continuous off a null set --
in the case of finite entropy
arises. Little is known in general, but Smorodinsky (unpublished)
pointed out that a necessary condition
is exponential recurrence -- for a given non-empty open
set~$O$, the Haar measure of the set of points
first returning to~$O$ after~$n$ iterations of the
automorphism decays exponentially in~$n$. Lind~\cite{MR657863}
showed that any ergodic group automorphism is
exponentially recurrent, so all that is known is that
this particular property does not preclude the possibility
that ergodic group automorphisms are finitarily isomorphic to
Bernoulli shifts. Rudolph has given a characterization of
the property of being finitarily isomorphic to
a Bernoulli shift~\cite{MR633760}.

\begin{problem}
Determine when an ergodic group automorphism is
finitarily isomorphic to a Bernoulli shift.
\end{problem}

\begin{theorem}[Ornstein~\cite{MR0257322}]\label{theoremornstein}
Bernoulli shifts with the same entropy are
measurably isomorphic.
\end{theorem}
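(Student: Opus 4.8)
The plan is to invoke Ornstein's isomorphism theorem in its original form. Recall that a Bernoulli shift is the shift map $\sigma$ on a product space $(A^{\mathbb Z}, \prod_{n\in\mathbb Z}\mu)$, where $(A,\mu)$ is a probability space (the ``state space''), and its Kolmogorov--Sinai entropy is the entropy $H(\mu) = -\sum_a \mu(\{a\})\log\mu(\{a\})$ of the state distribution (interpreted as $+\infty$ when this series diverges). Ornstein's theorem asserts that if two Bernoulli shifts have the same entropy --- finite or infinite --- then they are measurably isomorphic. The content to be extracted here is precisely this statement, and the proof is to cite and apply it.

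First I would separate the finite-entropy case from the infinite-entropy case, since Ornstein treats them by somewhat different arguments. In the finite-entropy case, one reduces to Bernoulli shifts on finitely many states: any Bernoulli shift of finite entropy $h$ is measurably isomorphic to the $2$-shift is false in general, but by a splitting/recoding argument every finite-entropy Bernoulli shift is isomorphic to one on a finite alphabet with a prescribed entropy value (this is elementary, using that the range of $H$ on finitely supported probability vectors is all of $[0,\infty)$, together with the fact that refining the partition into independent pieces does not change the Bernoulli property). The heart of the matter is then Ornstein's fundamental result that two finite-state Bernoulli shifts of equal entropy are isomorphic; the key technical device is the $\bar d$-metric on processes and the construction, by a successive-approximation scheme, of a partition in one system that is very Bernoulli and generates a process $\bar d$-close to the target, iterated to a limit. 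In the countably-infinite-entropy case one uses the analogous but separately established infinite-entropy version of the theorem, again due to Ornstein.

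The key steps, in order, are: (i) recall the definition of Bernoulli shift and that its entropy is $H(\mu)$; (ii) in the finite-entropy case, recode each of the two given shifts as a finite-state Bernoulli shift of the same entropy, noting both have the common value $h$; (iii) apply Ornstein's theorem~\cite{MR0257322} to conclude the two finite-state models are measurably isomorphic, and compose with the recodings; (iv) in the infinite-entropy case, apply the infinite-entropy form of Ornstein's theorem directly. Since all of (ii) is standard and (iii)--(iv) are exactly the cited results, the ``proof'' is essentially a pointer to~\cite{MR0257322}.

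The main obstacle --- and the reason this is a theorem one cites rather than proves --- is step (iii): constructing the isomorphism. There is no soft or abstract argument; entropy is a complete invariant for Bernoulli shifts only because of Ornstein's deep combinatorial machinery (the $\bar d$-metric, finitely-determined and very-weak-Bernoulli partitions, and the copying lemmas). I would not reproduce this; I would state the theorem as quoted and give the citation, since in the context of this survey the point is the \emph{use} of the classification, namely that it reduces the measurable-isomorphism problem for ergodic group automorphisms (via Theorem~\ref{theorem:groupautomorphismsandBernoullishifts}) to the single numerical invariant $h(T)$.
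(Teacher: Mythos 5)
Your proposal is correct and matches what the paper itself does: Theorem~\ref{theoremornstein} is stated as an external result and attributed to Ornstein~\cite{MR0257322} without proof, exactly the citation-and-apply route you take. Your supplementary remarks on the finite versus infinite entropy cases and the $\bar d$-metric machinery are accurate background but not part of any argument the paper supplies or needs.
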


Theorems~\ref{theorem:groupautomorphismsandBernoullishifts}
and~\ref{theoremornstein} together mean that the
space~$\mathcal{G}/\mathord\sim$ for the equivalence of
measurable isomorphism embeds into~$\mathbb R_{>0}$. There is a
representative Bernoulli shift for each measurable isomorphism
class of compact group automorphisms -- but is there a compact
group automorphism for each Bernoulli shift? Equivalently, is
there a compact group automorphism for each possible entropy?

In order to describe what is known about this
problem, let~$f\in\mathbb Z[x]$ be a polynomial
with integer coefficients, with factorization~$f(z)=\prod_{i=1}^{d}(z-\lambda_i)$
over~$\mathbb C$. Then the \emph{logarithmic
Mahler measure of~$f$} is the quantity
\[
m(f)=\int_0^1\log\vert f({\rm e}^{2\pi{\rm i}s})\vert{\rm d}s
=\sum_{\vert\lambda_i\vert>1}\log\vert\lambda_i\vert=\sum_{i=1}^{d}\log^{+}\vert\lambda_i\vert.
\]
Thus Kronecker's lemma is the statement that~$m(f)=0$ if and only if~$f$
is a product of cyclotomic polynomials.
Lehmer~\cite{MR1503118} raised the question of whether
there could be small positive values of the Mahler
measure.
He found the smallest value
known to date, corresponding to
the polynomial
\[
f(x)=x^{10}+x^9-x^7-x^6-x^5-x^4-x^3+x+1
\]
with Mahler measure approximately~$\log1.176$.
Write
\[
L=\inf\{m(f)\mid m(f)>0\}.
\]

Yuzvinski{\u\i}~\cite{MR0214726} computed the entropy
of endomorphisms of solenoids (see Lind and
the third author~\cite{MR961739}
for the history of earlier results and a
simpler approach),
and this led to a complete description
of the range of possible values.
These relate to the dynamics of group automorphisms
in several ways.

\begin{theorem}[Lind~\cite{MR0346130}]
The infinite torus~$\mathbb T^{\infty}$ has ergodic
automorphisms with finite entropy if
and only if~$L=0$.
\end{theorem}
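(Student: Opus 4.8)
The plan is to pass to Pontryagin duals. Write $M=\widehat{\mathbb{T}^{\infty}}$, a free abelian group of countably infinite rank, and view an automorphism $T$ of $\mathbb{T}^{\infty}$ as an automorphism $\widehat T$ of $M$; this makes $M$ a module over $R=\mathbb{Z}[x^{\pm1}]$ with $x$ acting as $\widehat T$. Throughout I would use: $T$ is ergodic if and only if $\widehat T$ has no non-trivial finite orbit on $M$; by Yuzvinski{\u\i}, $h(\widehat{R/(q)})=m(q)$ for every non-zero primitive $q\in\mathbb{Z}[x]$; entropy is additive along $\widehat T$-invariant extensions (the addition formula); and the entropy of an inverse limit is the supremum of the entropies of the stages.

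For the implication $L=0\Rightarrow$ existence, first note that $L$ equals the infimum of the positive numbers $m(f)$ with $f\in\mathbb{Z}[x]$ monic and irreducible: factoring a general $f$ into a constant times primitive irreducibles and using additivity of $m$, a non-unit constant factor or a non-monic primitive factor each contributes at least $\log2$. So when $L=0$ I may choose, for each $n\ge1$, a monic irreducible non-cyclotomic $f_n$ with $0<m(f_n)<2^{-n}$; writing $f_n=\prod_i(x-\lambda_i)$, from $\log|f_n(0)|=\log\prod_i|\lambda_i|\le\log\prod_{|\lambda_i|>1}|\lambda_i|=m(f_n)<\log2$ one gets $|f_n(0)|=1$, so the companion matrix $C_{f_n}$ lies in $\genlin_{d_n}(\mathbb{Z})$, $d_n=\deg f_n$, and gives an ergodic automorphism of $\mathbb{T}^{d_n}$ of entropy $m(f_n)$. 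Then $T=\bigoplus_n C_{f_n}$ acts on $\prod_n\mathbb{T}^{d_n}=\mathbb{T}^{\infty}$; it is ergodic because a periodic character is supported on finitely many blocks and would force one of those blocks to carry a root-of-unity eigenvalue, contrary to the choice of $f_n$, and $h(T)=\sup_k\sum_{n\le k}m(f_n)=\sum_n m(f_n)<\infty$.

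For the converse, suppose $T$ is an ergodic automorphism of $\mathbb{T}^{\infty}$ with $h(T)<\infty$. First, $M$ is a torsion $R$-module: otherwise some $m\in M$ generates a free submodule $Rm\cong R$, and the full shift on $\mathbb{T}^{\mathbb{Z}}=\widehat R$ is a factor of $(\mathbb{T}^{\infty},T)$, forcing $h(T)=\infty$. Fix a $\mathbb{Z}$-basis $(m_i)_{i\ge1}$ of $M$ and set $Q_N=R\{m_1,\dots,m_N\}$; these are $\widehat T$-invariant, of finite $\mathbb{Z}$-rank (as $M$ is $R$-torsion) but with $\mathbb{Z}$-rank at least $N$, and their union is $M$. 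Each $Q_N$ has a finite filtration by $\widehat T$-invariant subgroups whose successive quotients are, after discarding the finite ones (which carry no $\mathbb{Z}$-rank and contribute no entropy), of the form $R/(q)$ with $q\in\mathbb{Z}[x]$ primitive of positive degree; the multiset of these $q$ is the list of composition factors of the $\mathbb{Q}[x^{\pm1}]$-module $Q_N\otimes\mathbb{Q}$, so their degrees sum to $\operatorname{rank}_{\mathbb{Z}}Q_N\ge N$. Ergodicity forbids cyclotomic $q$ -- a cyclotomic composition factor of $Q_N\otimes\mathbb{Q}$ embeds $\mathbb{Q}[x^{\pm1}]/(\Phi_d)$ into it and hence produces a non-zero $m\in M$ with $\widehat T^d m=m$ -- so each $m(q)>0$ by Kronecker's lemma. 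Iterating the addition formula, $h(T)\ge h(T|_{\widehat{Q_N}})\ge\sum m(q)$ over these $q$. Letting $N\to\infty$, the polynomials $q$ occurring across all the $Q_N$ form an infinite collection (their degrees sum to $\operatorname{rank}_{\mathbb{Z}}M=\infty$) of non-cyclotomic primitive polynomials whose positive Mahler measures have partial sums at most $h(T)<\infty$; an infinite family of positive reals with finite sum has infimum $0$, so $L=0$.

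The forward direction is routine given the Yuzvinski{\u\i} formula; the substance is the converse, whose crux is to constrain the $R$-module structure of $\widehat{\mathbb{T}^{\infty}}$ from $h(T)<\infty$ alone. This requires the addition formula and $h(\widehat{R/(q)})=m(q)$, a careful passage from the truncations $Q_N$ to their union (monotone convergence of the truncated entropies, stability of composition factors under enlargement), and ruling out both cyclotomic composition factors (done by ergodicity) and the concentration of the infinite $\mathbb{Z}$-rank of $M$ on finitely many non-cyclotomic polynomials (which would force $h(T)=\infty$).
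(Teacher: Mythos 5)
Your argument is correct, and it is essentially the standard route that the paper simply cites to Lind~\cite{MR0346130} rather than proving: dualize to the free abelian group of countable rank viewed as a $\mathbb Z[x^{\pm1}]$-module, use the torsion/filtration structure together with Yuzvinski{\u\i}'s formula $h(\widehat{R/(q)})=m(q)$ and the addition formula to force $L=0$ from finite entropy and ergodicity, and conversely build an infinite product of companion-matrix toral automorphisms with summable Mahler measures when $L=0$. The only step stated more tersely than it deserves is the existence of a filtration of each $Q_N$ with quotients either finite or of the form $R/(q)$ matching the composition factors of $Q_N\otimes\mathbb Q$ (one obtains it by intersecting $Q_N$ with a rational composition series and splitting off a cyclic copy of $R/(q)$ with finite cokernel), but the claim is true and your sketch indicates the right mechanism.
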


An expansive automorphism is guaranteed to have finite
entropy, but unfortunately
Hastings~\cite{MR518555} shows that no automorphisms of
the infinite torus can be expansive.

\begin{theorem}[Lind~\cite{MR0460593}]\label{dougstheorem}
The set of possible entropies of ergodic
automorphisms of compact groups is~$(0,\infty]$
if~$L=0$, and is the countable set
\[
\{m(f)\mid f\in\mathbb Z[x],m(f)>0\}
\]
if~$L>0$.
\end{theorem}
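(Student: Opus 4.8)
The plan is to establish both inclusions, working throughout with Pontryagin duals: an ergodic automorphism~$(G,T)$ corresponds to a countable discrete module~$M=\widehat G$ over~$R=\mathbb Z[x^{\pm1}]$ with~$x$ acting as~$\widehat T$, and ergodicity of~$T$ is equivalent to~$\widehat T$ having no finite orbit on~$M\setminus\{0\}$. Note first that ergodicity already forces~$h(T)>0$ (completely positive entropy, as recalled in the introduction), which is half of the assertion when~$L=0$; the real content is the description of the range.

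\textbf{Realizing the values.} The building blocks are two. First, for a primitive irreducible non-cyclotomic~$f\in\mathbb Z[x]$ with~$f(0)\ne0$, the automorphism dual to multiplication by~$x$ on~$R/(f)$ is ergodic and, by Yuzvinski{\u\i}'s entropy formula~\cite{MR0214726} (see also~\cite{MR961739}), has entropy~$m(f)$ (with non-archimedean contributions when~$f$ is non-monic or~$|f(0)|\ne1$). Second, for a rational prime~$p$, the full shift on~$p$ symbols~$\widehat{\mathbb F_p[x^{\pm1}]}=\mathbb F_p^{\mathbb Z}$ is ergodic with entropy~$\log p=m(x-p)$. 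A finite product of these is again ergodic---a nonzero element of a direct-sum dual has finite support, so a finite~$\widehat T$-orbit would force one in a single factor---and its entropy is the sum. Decomposing an arbitrary~$f$ with~$m(f)>0$ into its content and its primitive, cyclotomic-free part then shows, via additivity of~$m$, that~$m(f)$ is realized in this way; a countable product of copies of one fixed positive-entropy example realizes~$\infty$. If~$L=0$ we need all of~$(0,\infty]$: here~$\{m(f):m(f)>0\}$ is a sub-semigroup of~$(\mathbb R_{>0},+)$ with infimum~$0$, hence dense, so any~$t\in(0,\infty)$ can be written as a convergent sum~$t=\sum_j m(f_j)$ chosen greedily, and the corresponding countable product of ergodic solenoid automorphisms has entropy~$t$.

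\textbf{Excluding other values.} Conversely, let~$(G,T)$ be ergodic and write~$M=\widehat G=\bigcup_iN_i$ as an increasing union of finitely generated submodules, so that~$G$ is the inverse limit of the duals of the~$N_i$ and~$h(T)=\sup_ih(T_{N_i})$. For a finitely generated~$N$, a prime filtration~$0=N_0\subset\cdots\subset N_m=N$ with successive quotients~$R/\mathfrak p_k$, together with the Yuzvinski{\u\i} addition formula, gives~$h(T_N)=\sum_kh(T_{R/\mathfrak p_k})$. Running through the prime spectrum of~$R=\mathbb Z[x^{\pm1}]$: $\mathfrak p=(0)$ gives~$\widehat R=\mathbb T^{\mathbb Z}$ with the shift, entropy~$\infty$;~$\mathfrak p=pR$ gives~$\mathbb F_p^{\mathbb Z}$ with the shift, entropy~$\log p=m(x-p)$;~$\mathfrak p=(f)$ with~$f$ irreducible primitive and~$f(0)\ne0$ gives entropy~$m(f)$; and the maximal primes give finite groups, entropy~$0$. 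Hence~$h(T_N)$ is either~$\infty$ or equals~$m(F)$ for a polynomial~$F\in\mathbb Z[x]$ assembled from the polynomials~$f$ and the linear factors~$x-p$ that occur, and in the finite case each contributing term is~$0$ or at least~$c:=\min(L,\log2)$.

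\textbf{Conclusion and the main difficulty.} If~$h(T)=\infty$ there is nothing to prove. Otherwise the~$h(T_{N_i})$ are bounded and increase at each step by~$h(T_{N_{i+1}/N_i})\in\{0\}\cup[c,\infty)$; when~$L>0$ this forces all but finitely many increments to vanish, the sequence stabilizes, and~$h(T)=m(F)$ for some~$F$, which is positive by ergodicity---so~$h(T)\in\{m(f):m(f)>0\}$. When~$L=0$ nothing further is needed. The one genuine obstacle is exactly this stabilization: a priori~$\sup_ih(T_{N_i})$ could be a strictly increasing limit of distinct Mahler measures that is itself not one, and what rules this out---available precisely when~$L>0$---is the uniform positive lower bound~$c=\min(L,\log2)$ on the nonzero entropy of any compact group automorphism. (The two standard ingredients invoked without proof are the additivity of entropy along short exact sequences of~$R$-modules and Yuzvinski{\u\i}'s identification of the entropy on~$R/(f)$ with~$m(f)$.)
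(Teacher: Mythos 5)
Your plan is correct: modulo the two standard ingredients you explicitly flag (additivity of entropy over closed invariant subgroups and the Yuzvinski{\u\i}--Lind--Ward identification of the entropy on~$\widehat{R/(f)}$ with~$m(f)$), the realization of values by finite and countable products of cyclic-module automorphisms, together with the exclusion argument via prime filtrations of finitely generated~$\mathbb Z[x^{\pm1}]$-modules and the uniform lower bound~$\min(L,\log 2)$ forcing the increments to stabilize, is precisely the standard proof of Theorem~\ref{dougstheorem}. The paper itself gives no proof --- it quotes the result from Lind~\cite{MR0460593} --- and your argument follows essentially the same route as that source, so there is nothing to correct.
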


This means that the cardinality of the quotient space~$\mathcal{G}/\mathord\sim$
is uncountable or countable depending on the solution to Lehmer's problem.
This is now a problem of some antiquity, and means that we do not
know if group automorphisms are special or not when
viewed
as measure-preserving transformations
among the systems measurably isomorphic to Bernoulli
shifts.

\begin{problem}[Lehmer~\cite{MR1503118}]
Determine if~$L=\inf\{m(f)\mid m(f)>0\}>0$.
\end{problem}

We refer to Boyd~\cite{MR644535} for an overview of Lehmer's
problem. Expressed in dynamical systems,
Theorem~\ref{dougstheorem} may be used to express the same
question as follows.

\begin{problem}[Lind~\cite{MR0460593}]
Determine if it is possible to
construct a compact group automorphism whose
entropy is not of the form~$m(f)$ for some~$f\in\mathbb Z[x]$.
\end{problem}

Attempting to describe the fibres (that is, the equivalence
classes) for measurable isomorphism requires answering an
already difficult problem in number theory, the \emph{inverse
Mahler measure problem}.

\begin{problem}
Given~$f\in\mathbb Z[x]$, characterize the set~$\{g\in\mathbb Z[x]\mid
m(g)=m(f)\}$.
\end{problem}

We refer to Boyd~\cite{MR874125} for an overview of this problem,
and to Staines~\cite{staines} for recent work and
references.

To conclude this section, it is worth noting that for other
group actions the same entropy range problem, that is, the
problem of determining the set
\begin{equation}\label{entropy_range}
\{h(T)\mid (G,T)\in\mathcal{G}_\Gamma\}
\subset\mathbb R,
\end{equation}
can turn out to be considerably simpler than providing an
answer to Lehmer's problem corresponding to the
case~$\Gamma=\mathbb{Z}$. For
example, Bj{\"o}rklund and Miles~\cite[Th.~4.1]{MR2533986}
show that if~$\Gamma$ is
any countably infinite amenable group with arbitrarily large
finite normal subgroups, then
the entropy range~\eqref{entropy_range}
is~$[0,\infty]$. In contrast, for~$\Gamma=\mathbb{Z}^d$, the
entropy range problem again reduces to Lehmer's problem
by~\cite{MR1062797}. Further discussion of the entropy range
problem for group actions may be found in~\cite{MR2533986}.

\section{One-solenoids and subgroups of the rationals}
\label{section:One-solenoids and subgroups of the rationals}

Some of the classification and range problems we wish to
discuss are already interesting in the special case
of one-dimensional solenoids. A compact metric abelian
group is called a one-solenoid if it is connected and
has topological dimension one. Equivalently, a one-solenoid
is a group whose dual or character group is a
subgroup of~$\mathbb Q$. In this section we
briefly recall the well-known description of these
subgroups.

\subsection{Subgroups of the rationals}\label{subsectionsubgroupsoftherationals}

Subgroups of~$\mathbb Q$ are readily classified (see
Baer~\cite{MR1545974}, or Beaumont and Zuckerman~\cite{MR0044522}
for a modern account).
Let~$H\le\mathbb Q$ be an additive subgroup,
and~$x\in H\setminus\{0\}$. Write~$\mathbb P=\{2,3,5,7,\dots\}$ for the set of rational primes. For each prime~$p\in\mathbb P$,
the \emph{$p$-height} of~$x$
is
\[
k_p(x)=\sup\{n\in\mathbb N\mid p^ny=x\mbox{ has a solution }y\in H\}
\in\mathbb N\cup\{\infty\}
\]
and the \emph{characteristic} of~$x$
is the sequence
\[
k(x)=\(k_p(x)\)_{p\in\mathbb P}\in\(\mathbb N\cup\{\infty\}\)^{\mathbb N}.
\]
Two sequences~$(a_p),(b_p)\in\(\mathbb N\cup\{\infty\}\)^{\mathbb N}$
\emph{belong to the same type} if~$a(p)=b(p)$ for all
but finitely many~$p$, and
for
any~$p$ with~$a(p)\neq b(p)$, both~$a(p)$ and~$b(p)$
are finite.
Notice that if~$x,y\in H\setminus\{0\}$ then~$k(x)$
and~$k(y)$ belong to the same type,
allowing us to define the \emph{type of~$H$},~$k(H)$, to
be the type of any non-zero element of~$H$.

On the other hand, given any
sequence~$(k_p)_{p\in\mathbb P}$ with~$k_p\in\mathbb N\cup\{\infty\}$,
we may define
\[
H\((k_p)_{p\in\mathbb P}\)=
\{\textstyle\frac{a}{b}\in\mathbb Q\mid
a,b\in\mathbb Z,\gcd(a,b)=1,
\ord_{p}(b)\le k_p\mbox{ for all }p\in\mathbb P\},
\]
an additive subgroup of~$\mathbb Q$.

\begin{theorem}[Baer~\cite{MR1545974}]\label{theorem:baer}
Any subgroup of~$\mathbb Q$ is of the form~$H\((k_p)_{p\in\mathbb P}\)$
for some sequence~$(k_p)\in\(\mathbb N\cup\{\infty\}\)^{\mathbb N}$.
Two subgroups of~$\mathbb Q$ are isomorphic
if and only if they are of the same type.
\end{theorem}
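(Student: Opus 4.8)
The plan is to establish the two assertions of Theorem~\ref{theorem:baer} separately, reducing both to concrete statements about characteristics.

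\emph{First part: every subgroup of~$\mathbb Q$ is some~$H\((k_p)\)$.} Let~$H\le\mathbb Q$ be nonzero. The key observation is that~$H$ is determined by the set of denominators that occur when its elements are written in lowest terms, and that this set is multiplicatively closed in the appropriate sense. Fix~$1\in H$ if possible; in general, pick any~$x_0\in H\setminus\{0\}$ and note that~$H$ and~$\frac{1}{x_0}H$ differ only by a rational scaling, so without loss of generality we may assume~$1\in H$ (a scaling does not change the type, since~$k(x_0)$ already belongs to the type of~$H$). For each prime~$p$ set~$k_p=k_p(1)\in\mathbb N\cup\{\infty\}$, the~$p$-height of~$1$ in~$H$. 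I claim~$H=H\((k_p)\)$. The inclusion~$H\subseteq H\((k_p)\)$ follows because if~$\frac{a}{b}\in H$ with~$\gcd(a,b)=1$ and~$p^e\divides b$, then writing~$ua+vb=1$ we get~$\frac{1}{b}=u\frac{a}{b}+v\in H$, and multiplying through shows~$p^e$ divides a denominator of an element of~$H$ of the form~$p^{-e}\cdot(\text{unit at }p)$, which after clearing the prime-to-$p$ part and using that~$H$ is a group forces~$k_p\ge e$. For the reverse inclusion~$H\((k_p)\)\subseteq H$: if~$\ord_p(b)\le k_p$ for all~$p$, then for each prime~$p\divides b$ there is~$y_p\in H$ with~$p^{\ord_p(b)}y_p=1$; combining these finitely many~$y_p$ by a partial-fractions / CRT argument produces~$\frac{1}{b}\in H$, hence~$\frac{a}{b}\in H$. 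This is the step requiring the most care, but it is entirely routine~$p$-adic bookkeeping.

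\emph{Second part: classification up to isomorphism by type.} One direction is easy: the type of~$H$ was defined independently of the choice of nonzero element, and any isomorphism~$\phi:H\to H'$ of abelian groups must satisfy~$k_p(x)=k_p(\phi(x))$ for every~$x$ and every~$p$, since the~$p$-height is defined purely group-theoretically (solvability of~$p^ny=x$). Hence~$H$ and~$H'$ have the same type. For the converse, suppose~$H$ and~$H'$ have the same type; after rescaling as above assume~$1\in H$ and~$1\in H'$, with characteristics~$(k_p)$ and~$(k_p')$ of the element~$1$ that agree at all but finitely many primes and are both finite at the primes where they differ. Let~$S$ be the finite set of primes where~$k_p\ne k_p'$. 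The idea is to build the isomorphism by ``correcting'' at each prime of~$S$: choose a rational~$r=\prod_{p\in S}p^{m_p}$ with~$m_p$ chosen so that~$r\cdot H$ has~$p$-height exactly~$\min(k_p,k_p')$ at each~$p\in S$ and is unchanged elsewhere, and similarly adjust~$H'$; then verify that multiplication by a suitable rational carries~$H$ isomorphically onto~$H'$. Concretely, one shows that for~$H=H\((k_p)\)$ and~$H'=H\((k_p')\)$ with the stated relation, the map~$x\mapsto cx$ for an appropriate~$c\in\mathbb Q^{\times}$ (built from the primes in~$S$) is a group isomorphism; checking it is well-defined and surjective is again a prime-by-prime verification using that the discrepancies are finite.

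The main obstacle is the second part's converse direction: producing the explicit rational multiplier and checking it intertwines the two height sequences correctly at the finitely many bad primes while doing no harm at the good primes. Everything else is standard manipulation of fractions in lowest terms together with the group axioms; the genuine content is the observation that a \emph{finite} set of finite discrepancies between characteristics can be absorbed by a single rational scaling, whereas an infinite discrepancy (or a discrepancy involving~$\infty$ versus a finite value) cannot, which is exactly what the definition of ``type'' encodes.
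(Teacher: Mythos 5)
Your proposal is correct, and it is essentially the classical argument of Baer; note that the paper itself gives no proof of Theorem~\ref{theorem:baer} at all --- it simply cites Baer~\cite{MR1545974}, with Beaumont and Zuckerman~\cite{MR0044522} as a modern account --- so the only meaningful comparison is with that standard proof, which is what you have reconstructed. Two points are worth tightening. First, the literal statement ``is of the form $H((k_p)_{p\in\mathbb P})$'' only holds after the rescaling you perform, since every group $H((k_p)_{p\in\mathbb P})$ contains $1$ (so, for instance, $2\mathbb Z$ is isomorphic to, but not equal to, such a group); your reduction to $1\in H$ via multiplication by $x_0^{-1}$ is exactly the intended up-to-isomorphism reading, and your observation that this does not change the type is the right justification. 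Second, in the converse of the classification the ``prime-by-prime verification'' you defer is in fact a one-line computation once you note that $H((k_p)_{p\in\mathbb P})=\{x\in\mathbb Q\mid \ord_p(x)\ge -k_p\mbox{ for all }p\in\mathbb P\}$: multiplication by $c\in\mathbb Q^{\times}$ shifts the constraint at $p$ by $\ord_p(c)$, so taking $c=\prod_{p\in S}p^{\,k_p-k_p'}$ (legitimate because $S$ is finite and $k_p,k_p'$ are finite on $S$) carries $H((k_p)_{p\in\mathbb P})$ onto $H((k_p')_{p\in\mathbb P})$ directly, with no need for the intermediate reduction to $\min(k_p,k_p')$. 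Your remaining steps --- the height computation $k_p(1)=k_p$, the B\'ezout argument giving $\frac1b\in H$ and hence $H\subseteq H((k_p)_{p\in\mathbb P})$, the coprimality/CRT step giving the reverse inclusion, and the fact that any isomorphism preserves $p$-heights because divisibility is defined group-theoretically --- are all sound and are exactly the ingredients of the cited classical proof.
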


Clearly the only self-homomorphisms of~$H=H\((k_p)\)$
are the maps~$x\mapsto\frac{a}{b}x$ with~$\gcd(a,b)=1$
and~$k_p(H)=\infty$ for any prime~$p$ dividing~$b$.
This gives a description of all
continuous automorphisms
of one-solenoids:~$(G,T)\in\mathcal{G}$ has~$G$
connected with topological dimension~$1$
if and only if~$G$ is dual to a group~$H\((k_p)_{p\in\mathbb P}\)$
and~$T$ is dual to a map~$x\mapsto\frac{a}{b}x$
with~$\gcd(a,b)=1$ and
with~$k_p=\infty$ for any prime~$p$ dividing~$a$ or~$b$.
Writing~$\mathcal S(H)$ for the
set of primes~$p$ with~$k_p(H)=\infty$,
Theorem~\ref{theorem:baer} shows in particular that~$G\cong G'$ implies that~$\mathcal S(\widehat{G})=\mathcal S(\widehat{G}')$.

\begin{example}\label{baerexamples}
Some of the diversity of subgroups
may be seen in the following examples.
\begin{enumerate}
\item $H\((\infty)_{p\in\mathbb P}\)=\mathbb Q$.
\item $H\((0)_{p\in\mathbb P}\)=\mathbb Z$.
\item $H\((\infty,\infty,0,0,\dots)\)=\mathbb Z[\frac16]$.
\item $H\((0,1,1,1,1,\dots)\)$ is the subgroup of
rationals with odd square-free denominator.
\item\label{s_integer_definition} For a set~$\mathcal S\subset\mathbb P$ of primes, there
is an associated subgroup of~$\mathcal S$-integers
\[
R_\mathcal S=\{x\in\mathbb Q\mid
\vert x\vert_p\le1\mbox{ for all }p\notin \mathcal S\}
=\mathbb Z[\textstyle\frac{1}{p}:p\in \mathcal S]
=H\((k_p)_{p\in\mathbb P}
\)
\]
where~$k_p=\infty$ if~$p\in S$ and~$0$ otherwise.
\item Let~$\omega=(\omega_j)\in\{H,T\}^{\mathbb N}$ be the outcome of an
infinitely repeated fair coin toss, and let
\[
k_{p_j}=\begin{cases}\infty&\mbox{if }j=H;\\0&\mbox{if }j=T.\end{cases}
\]
Then~$H\((k_p)_{p\in\mathbb P}\)$ is a `random'~$\mathcal S$-integer subgroup.
\end{enumerate}
\end{example}

Solenoids of higher topological dimension,
dual to subgroups of~$\mathbb Q^d$ with~$d$ greater than~$1$,
present peculiar difficulties unless there are additional
assumptions. A solenoid that carries an expansive
automorphism has a prescribed structure (Lawton~\cite{MR2622030},~\cite{MR0391051};
see also Kitchens and Schmidt~\cite[Sec.~5]{MR1036904} for
this result in the context of a more general treatment
of finiteness conditions on group automorphisms)
rendering it amenable to analysis; solenoids dual
to rings of~$\mathcal{S}$-integers in number fields also have
a prescribed structure, and this is used by Chothi {\it et al.}
to study orbit growth in these systems~\cite{MR1461206},~\cite{MR2339472}
and~\cite{MR2550149}.
Around the same time as Baer's classification of
subgroups of~$\mathbb Q$, Kurosh~\cite{MR1503333} and
Malcev~\cite{64.0064.05} described systems of
complete invariants for subgroups of~$\mathbb Q^d$,
but determining when the invariants
correspond to isomorphic groups is intractable for~$d>1$.
The difficulties encountered here now have a precise
formulation in terms of descriptive set theory,
and Thomas~\cite{MR1937205} proves
the remarkable result that the classification
problem for subgroups of~$\mathbb Q^{n+1}$ is
strictly more difficult than the same problem for~$\mathbb Q^n$
for~$n\ge1$ (we refer to the survey by Thomas~\cite{MR2275590}
for an overview of these results).

\subsection{Fixed points and entropy on one-solenoids}

There are several routes to a formula for~$\fix_T(n)$ if~$T$
is an automorphism of a one-solenoid: a group-theoretic
argument due to England and Smith~\cite{MR0307280},
a general valuation-theoretic approach due to Miles~\cite{MR2441142}
which is independent of the characteristic and the topological
dimension,
and in the~$\mathcal{S}$-integer case an adelic
argument due to Chothi, Everest {\it et al.}~\cite{MR1461206}.
Here we adapt the adelic approach
of Weil~\cite{MR0234930}
to cover all subgroups,
because this geometrical approach also evaluates
the entropy (the harmonic analysis used here is
not strictly adelic, but closer to the constructions
in Tate's thesis~\cite{MR2612222}, reproduced as~\cite{MR0217026}).
For completeness we explain in Lemma~\ref{lemma:modules}
the basic
idea from~\cite[Lem.~5.1]{MR1461206},
and refer to Hewitt and Ross~\cite{MR551496}
for background on the duality theory
between discrete abelian groups and compact
abelian groups.

The dual of a one-solenoid may be identified with a subgroup
of~$\mathbb Q$, and we now describe such a group using a direct
limit of fractional ideals over a ring
of~$\mathcal{S}$-integers (see
Example~\ref{baerexamples}(\ref{s_integer_definition})) based
on the concept of types introduced in the previous section.
Let~$R_\mathcal{S}$ be a ring of~$\mathcal S$-integers. A set
of rational primes~$\mathcal{P}$ disjoint from~$\mathcal{S}$
corresponds to a set of non-zero prime ideals
of~$R_\mathcal{S}$. Let~$k=(k_p)_{p\in\mathcal{P}}$ be a
sequence of positive integers and define a submodule of
the~$R_\mathcal{S}$-module $\mathbb{Q}$ by~$H=\sum_Q c_Q
R_\mathcal S$, where~$c_Q=\prod_{p\in Q}p^{-k_p}$ and~$Q$ runs
over all finite subsets of~$\mathcal P$. The group~$H$ is
isomorphic to~$R_\mathcal S$ if and only if~$\mathcal{P}$ is
finite, otherwise~$H$ may be expressed using a direct
limit~$\injlim_{i\to\infty}H_i$ of~$R_{\mathcal S}$-modules,
where
\[
H_i=p(1)^{-k_{p(1)}}\cdot\dots\cdot p(i)^{-k_{p(i)}}R_{\mathcal S},
\]
and~$p(1),p(2),\dots$ is an enumeration of~$\mathcal P$. By
Theorem~\ref{theorem:baer}, for any given type we may choose a
representative subgroup $H=H ((k_p)_{p\in\mathbb P})$ and note
that the set
\[
\mathcal S(H)=\{p\in\mathbb P\mid k_p=\infty\}
\]
is independent of the choice of representative. We set
\[
\mathcal P(H)=\{p\in\mathbb P\mid 0<k_p<\infty\}
\]
and
\[
k(H)=(k_p)_{p\in \mathcal{P}(H)},
\]
and use this data as above to obtain an
$R_{\mathcal{S}(H)}$-module having the same type as~$H$. Any
other choice of representative subgroup produces an isomorphic
$R_{\mathcal{S}(H)}$-module. This description now gives us a
canonical way to view a one-solenoid.

Since there is a short exact sequence
of discrete groups
\[
\{0\}
\longrightarrow
\mathbb Z
\longrightarrow
R_{\mathcal{S}(H)}
\longrightarrow
\sum_{p\in\mathcal{S}(H)}\mathbb{Z}[\textstyle\frac{1}{p}]/\mathbb Z
\longrightarrow
\{0\},
\]
and the
dual group of~$\mathbb{Z}[\frac{1}{p}]/\mathbb Z$
is isomorphic to the ring of $p$-adic integers~$\mathbb Z_p$, via duality, there is a short exact sequence of compact groups
\begin{equation}\label{Tu pure, o, Principessa}
\{0\}
\longrightarrow
\prod_{p\in \mathcal{S}(H)}
\mathbb Z_p
\longrightarrow
\widehat{R_{\mathcal{S}(H)}}
\longrightarrow
\mathbb T
\longrightarrow
\{0\}.
\end{equation}
Therefore,~$\widehat{R_{\mathcal{S}(H)}}$
is a central extension of~$\mathbb T$
by a cocycle
\begin{equation}\label{nella tua fredda stanza}
w:\mathbb T\times\mathbb T\longrightarrow
\prod_{p\in\mathcal{S}(H)}
\mathbb Z_p,
\end{equation}
and a simple explicit calculation along the lines of~\cite{ward}
shows that we may take
\[
w(s,t)=-\lfloor s+t\rfloor
\((1,1,\dots)\).
\]
Furthermore, there is an explicit geometrical description of
the solenoid~$\widehat{R_{\mathcal{S}(H)}}$
as follows.

\begin{lemma}\label{lemma:adaptedadelicconstruction}
The diagonal map~$\delta:x\mapsto(x,x,\dots)$ embeds~$R_{\mathcal{S}(H)}$
as a discrete and co-compact subgroup of the restricted directed product
\[
\mathbb A_{\mathcal{S}(H)}
=
\left\{
(x_{\infty},(x_p))\in\mathbb R\negmedspace\times\negmedspace\negmedspace
\prod_{p\in\mathcal{S}(H)}\mathbb Q_p
\mid
x_p\in\mathbb Z_p\mbox{ for all but finitely many }p
\right\},
\]
which is a locally compact topological group.
The set
\[
F=[0,1)\times\prod_{p\in\mathcal{S}(H)}\mathbb Z_p
\]
is a fundamental domain for~$\delta(R_{\mathcal{S}(H)})$
in~$\mathbb A_{\mathcal{S}(H)}$,
and
\[
\widehat{R_{\mathcal{S}(H)}}
\cong
\mathbb A_{\mathcal{S}(H)}/
\delta(R_{\mathcal{S}(H)}).
\]
\end{lemma}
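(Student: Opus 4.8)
The plan is to establish the assertions of the lemma in the order: discreteness of $\delta(R)$; that $F$ is a fundamental domain (from which cocompactness is immediate); and the duality isomorphism. Throughout write $\mathcal{S}=\mathcal{S}(H)$, $R=R_{\mathcal{S}}=\mathbb{Z}[\frac1p:p\in\mathcal{S}]$ and $\mathbb{A}=\mathbb{A}_{\mathcal{S}}$; note that $\mathbb{R}\times\prod_{p\in\mathcal{S}}\mathbb{Z}_p$ is an open subgroup of $\mathbb{A}$, compact by Tychonoff, so $\mathbb{A}$ is locally compact, and that $\delta$ is visibly a continuous injective homomorphism. For discreteness I would use the open neighbourhood $U=(-1,1)\times\prod_{p\in\mathcal{S}}\mathbb{Z}_p$ of $0$: if $x\in R$ and $\delta(x)\in U$ then $\vert x\vert_\infty<1$ and $\vert x\vert_p\le1$ for $p\in\mathcal{S}$, while $\vert x\vert_q\le1$ for $q\notin\mathcal{S}$ since $x\in R$, so the product formula $\prod_v\vert x\vert_v=1$ forces $x=0$.

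Next I would check that $F$ is a fundamental domain. Given $\mathbf{x}=(x_\infty,(x_p))\in\mathbb{A}$, all but finitely many $x_p$ lie in $\mathbb{Z}_p$; for each exceptional $p$, use $\mathbb{Q}_p/\mathbb{Z}_p\cong\mathbb{Z}[\frac1p]/\mathbb{Z}$ to pick $r_p\in\mathbb{Z}[\frac1p]\subseteq R$ with $x_p-r_p\in\mathbb{Z}_p$; subtracting $\delta(\sum_p r_p)$ moves every $p$-adic coordinate into $\mathbb{Z}_p$ (for $q\ne p$ the summand $r_q$ already lies in $\mathbb{Z}_p$), and then subtracting $\delta$ of the integer part of the real coordinate puts it in $[0,1)$ without disturbing the integral $p$-adic coordinates, so each coset meets $F$. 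If $\mathbf{x}-\mathbf{x}'=\delta(r)$ with $\mathbf{x},\mathbf{x}'\in F$, then $r=x_p-x_p'\in\mathbb{Z}_p$ for every $p\in\mathcal{S}$, whence $r\in\mathbb{Z}$, and $r=x_\infty-x_\infty'\in(-1,1)$ forces $r=0$; so $F$ meets each coset exactly once. Cocompactness follows at once, since the quotient map sends the compact set $\overline{F}=[0,1]\times\prod_{p\in\mathcal{S}}\mathbb{Z}_p$ onto $\mathbb{A}/\delta(R)$.

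For the isomorphism $\widehat{R}\cong\mathbb{A}/\delta(R)$ I would build an explicit character. Let $\chi_\infty(t)=\mathrm{e}^{-2\pi\mathrm{i}t}$ on $\mathbb{R}$, let $\chi_p$ be the standard character of $\mathbb{Q}_p$ with kernel $\mathbb{Z}_p$, and set $\chi=\chi_\infty\prod_{p\in\mathcal{S}}\chi_p$, a well-defined character of $\mathbb{A}$ since $x_p\in\mathbb{Z}_p$ for almost all $p$. As $R$ acts on $\mathbb{A}$ by scalar multiplication (mapping $\mathbb{A}$ into itself), the formula $\Phi(\mathbf{x})(r)=\chi(r\mathbf{x})$ defines a homomorphism $\Phi\colon\mathbb{A}\to\widehat{R}$, continuous because $R$ is discrete. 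The additive product formula for $\mathbb{Q}$ --- that $s$ minus the sum of its $p$-adic principal parts lies in $\mathbb{Z}$ --- shows $\chi\circ\delta$ is trivial on $R$, so $\delta(R)\subseteq\ker\Phi$ and $\Phi$ factors through the compact group $\mathbb{A}/\delta(R)$, making $\Phi(\mathbb{A})$ closed in $\widehat{R}$. It is also dense: if $0\ne r_0\in R=\widehat{\widehat{R}}$ annihilated $\Phi(\mathbb{A})$ then $\chi(r_0\mathbf{x})=1$ for all $\mathbf{x}$, and surjectivity of multiplication by $r_0$ on $\mathbb{A}$ would make $\chi$ trivial, a contradiction; hence $\Phi$ is onto. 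To identify $\ker\Phi$ I would use the fundamental domain: for $\mathbf{x}\in\ker\Phi\cap F$, evaluating $\Phi(\mathbf{x})$ at $1$ gives $x_\infty\in\mathbb{Z}$, so $x_\infty=0$, and evaluating at $p^{-k}$ for $p\in\mathcal{S}$ and all $k\ge1$ gives $x_p\in\bigcap_{k\ge1}p^k\mathbb{Z}_p=\{0\}$; thus $\ker\Phi\cap F=\{0\}$, and as $\ker\Phi\supseteq\delta(R)$ is a union of $\delta(R)$-cosets, $\ker\Phi=\delta(R)$. The induced continuous bijection $\mathbb{A}/\delta(R)\to\widehat{R}$ from a compact group to a Hausdorff group is then a topological isomorphism.

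I expect the last paragraph to contain the only real obstacle: choosing $\chi$ correctly on the restricted product, handling the factorisation and closed-plus-dense argument cleanly, and above all extracting $\ker\Phi=\delta(R)$ from the pointwise evaluations via the fundamental domain. Everything before that is bookkeeping with the product formula and partial fractions. As an independent check one can instead transport the group law of $\mathbb{A}/\delta(R)$ to $F$ and observe that it is exactly the extension of $\mathbb{T}\cong[0,1)$ by $\prod_{p\in\mathcal{S}}\mathbb{Z}_p$ with the cocycle $w(s,t)=-\lfloor s+t\rfloor(1,1,\dots)$ exhibited above, which on its own identifies $\mathbb{A}/\delta(R)$ with $\widehat{R_{\mathcal{S}(H)}}$.
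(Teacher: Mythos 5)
Your proof is correct, but note that the paper does not actually prove this lemma: its ``proof'' is a citation, observing that the statement is a special case of the construction in Tate's thesis, with Ramakrishnan--Valenza given as an accessible reference. What you have written out is essentially that standard argument, specialised from the full adele ring over $\mathbb{Q}$ to the restricted product over $\mathcal{S}(H)\cup\{\infty\}$: the product formula gives discreteness, principal parts at the primes of $\mathcal{S}(H)$ reduce any element into $F$, and the explicit additive character $\chi=\chi_\infty\prod_p\chi_p$ together with the compact-image/trivial-annihilator argument and the evaluations at $1$ and at $p^{-k}$ identify $\ker\Phi$ with $\delta(R_{\mathcal{S}(H)})$; all of these steps are sound, including the surjectivity of multiplication by a nonzero $r_0\in R_{\mathcal{S}(H)}$ on $\mathbb{A}_{\mathcal{S}(H)}$, which holds because $\vert r_0\vert_p=1$ for all but finitely many $p$. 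The citation buys brevity; your version buys self-containedness and makes visible exactly where the restriction to $\mathcal{S}(H)$ enters (discreteness uses $\vert x\vert_q\le1$ at $q\notin\mathcal{S}(H)$ for $x\in R_{\mathcal{S}(H)}$, and only the characters coming from $p^{-k}$ with $p\in\mathcal{S}(H)$ are needed to kill the $p$-adic coordinates). Two small blemishes, neither fatal: $\mathbb{R}\times\prod_{p\in\mathcal{S}(H)}\mathbb{Z}_p$ is locally compact, not compact (Tychonoff gives compactness of $\prod_{p\in\mathcal{S}(H)}\mathbb{Z}_p$ only), which is still enough to conclude that $\mathbb{A}_{\mathcal{S}(H)}$ is locally compact; and your closing ``independent check'' via the cocycle $w(s,t)=-\lfloor s+t\rfloor(1,1,\dots)$ should remain a cross-check rather than a substitute proof, since the paper's identification of $\widehat{R_{\mathcal{S}(H)}}$ as the extension with that cocycle is itself only asserted via a sketched calculation.
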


\begin{proof}
This is a special case of the construction introduced
in~\cite{MR2612222}. See also~\cite{MR1680912} for an
accessible account.
\end{proof}

Recall that a subgroup~$\Gamma<X$ in a locally compact
topological group is called a \emph{uniform lattice}
if~$\Gamma$ is discrete and the quotient space~$X/\Gamma$ is
compact in the quotient topology, and a measurable
set~$F\subset X$ is a \emph{fundamental domain} for~$\Gamma$ if
it contains exactly one representative of each coset
of~$\Gamma$. The \emph{module} of a homomorphism
\[
A:X\to X
\]
is the quantity~$m(AU)/m(U)$ for an open set~$U$ of finite
measure, where~$m$ is a choice of Haar measure on~$X$.

\begin{lemma}\label{lemma:modules}
Let~$\Gamma$ be a
uniform lattice in a locally compact abelian group~$X$,
let~$F$ be a fundamental domain for~$\Gamma$,
and let~$m$ denote the Haar measure on~$X$
normalized to have~$m(F)=1$.
Let $\widetilde{A}:X\to X$ be a
continuous surjective
homomorphism with~$\widetilde{A}(\Gamma)\subset\Gamma$, and
let~$A:X/{\Gamma}\rightarrow X/{\Gamma}$ be the
induced map.
If~$\ker A$ is discrete,
then
\[
\module_X(\widetilde{A})=\vert\ker A\vert=m(\widetilde{A}F).
\]
\end{lemma}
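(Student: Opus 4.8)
The plan is to prove the two equalities $\module_X(\widetilde A)=m(\widetilde A F)$ and $m(\widetilde A F)=\vert\ker A\vert$ separately. First observe that, $\Gamma$ being a uniform lattice, $X/\Gamma$ is compact, so the discrete subgroup $\ker A$ is finite; write $d=\vert\ker A\vert$. Let $\pi\colon X\to X/\Gamma$ be the quotient map. From $\pi\circ\widetilde A=A\circ\pi$ one reads off $\ker A=\widetilde A^{-1}(\Gamma)/\Gamma$, so $\Lambda:=\widetilde A^{-1}(\Gamma)$ is a closed subgroup of $X$ with $\Gamma\le\Lambda$ and $[\Lambda:\Gamma]=d$; being a finite union of translates of the closed discrete set $\Gamma$, it is itself a uniform lattice, and in particular $\ker\widetilde A\subseteq\Lambda$ is discrete. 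By the open mapping theorem $\widetilde A$ is open, and having discrete kernel it is a local homeomorphism.

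For the equality $m(\widetilde A F)=\module_X(\widetilde A)$, I would use that, $\widetilde A$ being open and locally injective, the ratio $m(\widetilde A U)/m(U)$ is defined for every relatively compact open $U$; the relation $\widetilde A(U+x)=\widetilde A U+\widetilde A x$ together with translation invariance of $m$ forces this ratio to be independent of $U$, and its common value is $\module_X(\widetilde A)$. Covering $F$ by countably many small open sets on which $\widetilde A$ is injective and adding up --- using that $\widetilde A$ is injective on $F$ off an $m$-null set --- then gives $m(\widetilde A F)=\module_X(\widetilde A)\cdot m(F)=\module_X(\widetilde A)$.

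To identify $m(\widetilde A F)$ with $d$, note that surjectivity of $\widetilde A$ gives $X=\widetilde A(X)=\bigcup_{\gamma\in\Gamma}\bigl(\widetilde A F+\widetilde A\gamma\bigr)$, while injectivity of $\widetilde A$ makes $\gamma\mapsto\widetilde A\gamma$ a bijection of $\Gamma$ onto the sublattice $\widetilde A(\Gamma)\le\Gamma$ and ensures the translates $\widetilde A F+\widetilde A\gamma$ overlap only in null sets; hence $\widetilde A F$ is, up to a null set, a fundamental domain for $\widetilde A(\Gamma)$ in $X$. Since a fundamental domain for a sublattice of $\Gamma$ of index $e$ has $m$-measure $e\cdot m(F)$, we get $m(\widetilde A F)=[\Gamma:\widetilde A(\Gamma)]$. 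Finally $\widetilde A$ restricts to isomorphisms $\Gamma\xrightarrow{\ \sim\ }\widetilde A(\Gamma)$ and $\widetilde A^{-1}(\Gamma)\xrightarrow{\ \sim\ }\Gamma$ carrying the inclusion $\Gamma\hookrightarrow\widetilde A^{-1}(\Gamma)$ to the inclusion $\widetilde A(\Gamma)\hookrightarrow\Gamma$, so $[\Gamma:\widetilde A(\Gamma)]=[\widetilde A^{-1}(\Gamma):\Gamma]=d$, and the chain $\module_X(\widetilde A)=m(\widetilde A F)=d=\vert\ker A\vert$ closes up.

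The step that needs the most care is the repeated appeal to the injectivity of $\widetilde A$ on $F$ (up to a null set): both the identity $m(\widetilde A F)=\module_X(\widetilde A)\cdot m(F)$ and the tiling in the previous paragraph break down if $\widetilde A$ folds a positive-measure piece of $F$ onto itself, so the real content is to rule this out, that is, to control $\ker\widetilde A$ and not merely $\ker A$. In the situation of interest $\widetilde A$ is multiplication by a nonzero rational on the locally compact group $\mathbb A_{\mathcal S(H)}$ of Lemma~\ref{lemma:adaptedadelicconstruction} and is therefore injective, which makes this point automatic.
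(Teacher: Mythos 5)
Your argument is correct (granted the same injectivity of~$\widetilde{A}$ that the printed proof also uses tacitly), but it runs along a genuinely different line from the paper's. The paper argues locally in the compact quotient: choosing~$F$ to contain a small neighbourhood~$U$ of the identity, it uses that the surjective endomorphism~$A$ of~$X/\Gamma$ preserves Haar measure and that~$A^{-1}V$ is a disjoint union of~$\vert\ker A\vert$ homeomorphic copies of~$V$, so that~$m(\widetilde{A}U)=\vert\ker A\vert\,m(U)$; this computes the module directly, and~$m(\widetilde{A}F)$ is then read off by scaling up from~$U$ to~$F$. You never invoke measure-preservation of~$A$ on the quotient: instead you show that~$\widetilde{A}F$ tiles~$X$ under~$\widetilde{A}(\Gamma)$, giving~$m(\widetilde{A}F)=[\Gamma:\widetilde{A}\Gamma]$, and you convert that index into~$\vert\ker A\vert$ by comparing with~$\widetilde{A}^{-1}(\Gamma)$ --- a more algebraic route, at the cost of leaning on the open mapping theorem and a cut-and-paste comparison of fundamental domains. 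Your closing caveat is well judged rather than a defect: the passage from the local scaling factor to~$m(\widetilde{A}F)$ (the paper's final sentence just as much as your covering argument) genuinely needs~$\widetilde{A}$ to be injective, at least on~$F$ off a null set. Indeed, taking~$X=\mathbb T\times\mathbb R$, $\Gamma=\{0\}\times\mathbb Z$, $F=\mathbb T\times[0,1)$ and~$\widetilde{A}(t,x)=(2t,x)$ gives~$\vert\ker A\vert=2$ but~$m(\widetilde{A}F)=1$, so some control of~$\ker\widetilde{A}$ beyond discreteness of~$\ker A$ is indispensable for the statement as written; as you note, in the intended application (multiplication by a nonzero rational on~$\mathbb A_{\mathcal{S}(H)}$ from Lemma~\ref{lemma:adaptedadelicconstruction}) injectivity is automatic, so both your proof and the paper's apply there without further comment.
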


\begin{proof}
Choose~$F$ so that it contains an open
neighborhood~$U$ of the identity in~$X$ (this
is possible since~$\Gamma$ is discrete in~$X$).
Since~$X/\Gamma$ is compact,~$\ker A$ is finite
and so there is a disjoint union~$A^{-1}V=V_{1}\sqcup\cdots\sqcup V_{\vert\ker A\vert}$
if~$V$ is a sufficiently small neighbourhood
of the identity in~$X/\Gamma$,
with each~$V_i$ an open neighbourhood of a point
in the fibre~$A^{-1}(0_{X/{\Gamma}})$.
Since~$A$ is measure--preserving,
\[
m\left(A^{-1}V\right)=
m\left(V)\right).
\]
If~$U$
and~$V$
are sufficiently small,
then the quotient map~$x\mapsto x+\Gamma$
is a homeomorphism between~$U$ and~$V$, and so
\[
m\left(\widetilde{A}U\right)=m\left(AV\right)
=\vert\ker A\vert m\left(V\right)
=\vert\ker A\vert m\left(U\right);
\]
furthermore, since $U(0_{X})\subset F$,~$m(\widetilde{A}F)=\vert\ker A\vert.$
\end{proof}

\begin{example}\label{fixedpointsexamples}
A simple situation to which Lemma~\ref{lemma:modules} may
be applied is
endomorphisms of the torus.
\begin{enumerate}
\item[\rm(1)] Taking~$\Gamma=\mathbb Z< X=\mathbb R$,~$\widetilde{A}(x)=bx$
for some~$b\in\mathbb Z\setminus\{0\}$, and~$m$ to be Lebesgue measure with~$m\(F=[0,1)\)=1$,
we see that
\[
\vert\ker(x\mapsto bx\pmod 1)\vert=\vert b\vert.
\]
In particular, it follows that if~$T:\mathbb R/\mathbb Z\to\mathbb R/\mathbb Z$
is the map~$x\mapsto ax$
for some~$a$ in~$\mathbb Z\setminus\{0,\pm1\}$
then~$\fix_{T}(n)=\vert a^n-1\vert$ and so
\[
\zeta_{T}(z)=\frac{1-z}{1-az}
\]
if~$a>0$, and
\[
\zeta_{T}(z)=
\exp
\(
\sum_{n\ge1}\frac{\vert a\vert^n}{n}z^n+\sum_{n\ge1}\frac{1}{n}z^{n}-2\sum_{n\ge1}\frac{1}{2n}z^{2n}
\)
=\frac{1+z}{1-\vert a\vert z}
\]
if~$a<0$.
\item[\rm(2)] As seen above, some care is needed in dealing with the
distinction between the expressions~$\vert a^n-1\vert$ and~$\vert a\vert^n-1$:
for an automorphism~$T$ of the~$d$-torus~$G=\mathbb T^d$
defined by an integer matrix~$A_T\in\genlin_d(\mathbb Z)$
the same argument gives the formula
\[
\fix_T(n)=\vert\det(A_T^n-I)\vert,
\]
and {\it a priori} an argument is needed to show that~$\zeta_T(z)$ is a rational
function of~$z$ because of the absolute value. This is
discussed in Smale~\cite[Prop.~4.15]{MR0228014},
and an elementary algorithmic way to compute
the zeta function
in integer arithmetic is given
by Baake, Lau and Paskunas~\cite{MR2670229}.
\end{enumerate}
\end{example}

We can also use Lemma~\ref{lemma:modules}
to find a formula for the periodic points of an
automorphism of a one-solenoid, recovering
in different notation the result of~\cite{MR0307280}
and the one-dimensional case of~\cite{MR2441142}.

\begin{proposition}\label{fixedpointformula}
If~$T:\widehat{H}\to \widehat{H}$ is an automorphism of a one-solenoid dual to the
map~$x\mapsto rx$ on~$H\leqslant\mathbb{Q}$, then
\[
\fix_T(n)=\vert r^n-1\vert\times\prod_{p\in \mathcal{S}(H)}\vert r^n-1\vert_p.
\]
\end{proposition}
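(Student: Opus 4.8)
The plan is to recognise $\fix_T(n)$, via Pontryagin duality, as the module of an explicit multiplication map on an adelic group, and then to read it off from Lemma~\ref{lemma:modules}. Write $r=a/b$ in lowest terms; since $T$ is ergodic, $r$ is not a root of unity, so $a^n\neq b^n$ for all $n\geq1$. Let $S\colon x\mapsto rx$ be the dual map on $H$, so that $T^n-\operatorname{id}$ is dual to the map $x\mapsto(r^n-1)x$ on $H$. As $b$ is a unit in $H$ (every prime dividing $b$ has infinite height), the image of $S^n-\operatorname{id}$ is $(r^n-1)H=(a^n-b^n)H$, a subgroup of the discrete group $H$ and hence closed; so by duality the finite group $\fix_T(n)=\ker(T^n-\operatorname{id})$ has dual $H/(a^n-b^n)H$, whence $\fix_T(n)=\vert H/(a^n-b^n)H\vert$ (finite since only finitely many primes divide $a^n-b^n$). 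It remains to evaluate this index, and the plan is to do so by first treating the $\mathcal S$-integer case adelically and then passing to a general one-solenoid through the direct-limit description recalled above.

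First I would treat $H=R_{\mathcal S(H)}$, where $r$ is a unit. By Lemma~\ref{lemma:adaptedadelicconstruction} we may realise $\widehat{R_{\mathcal S(H)}}$ as $\mathbb A_{\mathcal S(H)}/\delta(R_{\mathcal S(H)})$ with fundamental domain $F=[0,1)\times\prod_{p\in\mathcal S(H)}\mathbb Z_p$ normalised so that $m(F)=1$ (one checks that under this identification $T$ is the automorphism induced by multiplication by $r$ on $\mathbb A_{\mathcal S(H)}$). Then I would apply Lemma~\ref{lemma:modules} with $\widetilde A$ equal to multiplication by $r^n-1$: this is a continuous surjective homomorphism carrying $\delta(R_{\mathcal S(H)})$ into itself, and it is injective because $r^n-1\neq0$, hence a topological automorphism of $\mathbb A_{\mathcal S(H)}$, so the kernel of the induced map $A=T^n-\operatorname{id}$ is the discrete --- hence finite --- subgroup $\widetilde A^{-1}\bigl(\delta(R_{\mathcal S(H)})\bigr)/\delta(R_{\mathcal S(H)})$. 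The lemma gives $\fix_T(n)=\vert\ker A\vert=\module_{\mathbb A_{\mathcal S(H)}}(r^n-1)$, and since the Haar module of multiplication by $c\in\mathbb Q^{\times}$ on $\mathbb A_{\mathcal S(H)}$ is the product $\vert c\vert\prod_{p\in\mathcal S(H)}\vert c\vert_p$ of the archimedean and $p$-adic local moduli (all but finitely many factors being $1$), this yields the asserted formula when $H=R_{\mathcal S(H)}$.

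For a general one-solenoid I would use the description $H=\varinjlim H_i$ from the previous section, in which each $H_i$ is isomorphic to $R_{\mathcal S(H)}$ as an $R_{\mathcal S(H)}$-module on which $S$ acts by multiplication by $r$; dualising, $\widehat H=\varprojlim\widehat{H_i}$ equivariantly, with each $\widehat{H_i}\cong\widehat{R_{\mathcal S(H)}}$, so $\fix_T(n)(\widehat{H_i})$ already equals $\vert r^n-1\vert\prod_{p\in\mathcal S(H)}\vert r^n-1\vert_p$ by the previous step. The transition maps $\widehat{H_{i+1}}\to\widehat{H_i}$ are surjective with kernel a finite cyclic $p(i+1)$-group on which $T^n-\operatorname{id}$ acts by multiplication by $r^n-1$, and this is invertible whenever $p(i+1)\nmid a^n-b^n$; since $a^n-b^n$ has only finitely many prime factors, the maps induced on $\fix_T(n)$ are isomorphisms for all large $i$, so $\vert\fix_T(n)(\widehat H)\vert$ is the common value. (One could instead obtain $\vert H/(a^n-b^n)H\vert=\vert R_{\mathcal S(H)}/(a^n-b^n)R_{\mathcal S(H)}\vert$ from the snake lemma applied to $0\to R_{\mathcal S(H)}\to H\to H/R_{\mathcal S(H)}\to0$, using that $\ker$ and $\operatorname{coker}$ of multiplication by $a^n-b^n$ on the torsion group $H/R_{\mathcal S(H)}$ have the same order.) The step I expect to require the most care is exactly this reduction: one cannot apply Lemma~\ref{lemma:modules} to $\widehat H$ in one go, because once infinitely many primes have positive finite height the diagonal image $\delta(H)$ is no longer discrete in $\mathbb A_{\mathcal S(H)}$ --- the elements $1/p^{k_p}$ with $p\in\mathcal P(H)$ accumulate at the origin --- so there is no fundamental domain on which to normalise Haar measure, and it is precisely the direct-limit picture of the solenoid that gets around this. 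The remaining ingredients --- the duality identification, surjectivity and injectivity of multiplication by $r^n-1$ on $\mathbb A_{\mathcal S(H)}$, and multiplicativity of the Haar module over the archimedean and $p$-adic factors --- are routine.
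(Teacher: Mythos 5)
Your proposal is correct and takes essentially the same route as the paper: duality identifies $\fix_T(n)$ with $\vert H/(r^n-1)H\vert$, the case $H=R_{\mathcal{S}(H)}$ is handled adelically via Lemmas~\ref{lemma:adaptedadelicconstruction} and~\ref{lemma:modules} as the module of multiplication by $r^n-1$, and the general one-solenoid is reduced to this case through the direct-limit description of $H$. The only difference is that you spell out (via the dual inverse limit, or the snake lemma) the reduction that the paper compresses into the single assertion $H/(r^n-1)H\cong R_{\mathcal{S}(H)}/(r^n-1)R_{\mathcal{S}(H)}$.
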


\begin{proof}
Since~$x\mapsto rx$ is an automorphism of~$H$,~$p\in\mathcal{S}(H)$
whenever~$\vert r\vert_p\neq1$.
Lemmas~\ref{lemma:adaptedadelicconstruction} and~\ref{lemma:modules}
show that if~$T'$ is the automorphism dual to~$x\mapsto rx$
on~$\mathbb{A}_{\mathcal{S}(H)}$, then
\[
\fix_{T'}(n)=\module_{\mathbb{A}_{\mathcal{S}(H)}}(x\mapsto rx)
=\vert r^n-1\vert\times\prod_{p\in\mathcal{S}(H)}\vert r^n-1\vert_p.
\]
The set of points of period~$n$
is dual to the group~$H/(r^n-1)H$,
and multiplication by~$p^{k_p}$ for any~$p\in\mathcal{P}(H)$
remains non-invertible in the direct
limit~$H$, so
\[
H/(r^n-1)H
\cong
R_{\mathcal{S}(H)}/
(r^n-1)R_{\mathcal{S}(H)}
\]
for each~$n\ge1$,
showing the proposition.
\end{proof}

The geometric viewpoint using adeles also allows the entropy to be computed
easily using~\eqref{equation:entropy};
this calculation is originally due to Abramov~\cite{MR22:8103}.
Write
\[
\log^{+}(t)=\max\{0,\log t\}
\]
for~$t>0$.

\begin{proposition}[Abramov's formula]\label{lemma:abramovformula}
If~$T:\widehat{H}\to \widehat{H}$ is an automorphism of a one-solenoid dual to the
map~$x\mapsto rx$ on~$H\leqslant\mathbb{Q}$, then
\[
h(T)=\sum_{p\in\mathbb{P}\cup\{\infty\}}\log^{+}\vert r\vert_p
=\log\max\{\vert a\vert,\vert b\vert\}
\]
where~$r=\frac{a}{b}$ with~$\gcd(a,b)=1$.
\end{proposition}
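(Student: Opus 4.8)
The plan is to compute the entropy directly from the adelic model provided by Lemma~\ref{lemma:adaptedadelicconstruction}, exploiting the fact that entropy of a group automorphism can be read off from the local expansion rates at each place. First I would observe that $\widehat{H}$ is, as a measure space with its $T$-action, the quotient $\mathbb A_{\mathcal S(H)}/\delta(R_{\mathcal S(H)})$ with the automorphism induced by multiplication by $r$ on $\mathbb A_{\mathcal S(H)}=\mathbb R\times\prod_{p\in\mathcal S(H)}\mathbb Q_p$. The automorphism acts diagonally, multiplying the real coordinate by $r$ and the $p$-adic coordinate by $r$ for each $p\in\mathcal S(H)$. The key structural point, already used in the proof of Proposition~\ref{fixedpointformula}, is that $p\in\mathcal S(H)$ whenever $|r|_p\neq1$, so every place where $r$ is not a unit is among those we have adjoined; the places outside $\{\infty\}\cup\mathcal S(H)$ contribute a factor of $1$ to everything and can be ignored.

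Next I would recall (from Bowen~\cite{MR0274707}, or the treatment in~\cite{ELW}) that for an automorphism of a compact group the entropy equals the sum over places of the local contributions, each of which is $\log^+$ of the module of multiplication by $r$ on that local factor. Concretely, on $\mathbb R$ the module of $x\mapsto rx$ is $|r|=|r|_\infty$, and on $\mathbb Q_p$ the module of $x\mapsto rx$ is $|r|_p$ in the normalized $p$-adic absolute value; this is exactly the content of Lemma~\ref{lemma:modules} applied locally, or simply the defining property of the module of an automorphism of a locally compact group. Since expansion in a given local direction contributes to entropy only when the module exceeds $1$, and contraction contributes nothing, the local contribution at a place $v$ is $\log^+|r|_v$. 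Summing over $v\in\{\infty\}\cup\mathcal S(H)$ — equivalently over all $v\in\mathbb P\cup\{\infty\}$, since the omitted places give $\log^+1=0$ — yields
\[
h(T)=\sum_{p\in\mathbb P\cup\{\infty\}}\log^+|r|_p.
\]

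Finally I would identify this sum with $\log\max\{|a|,|b|\}$ where $r=a/b$ in lowest terms. Write the finite set of primes dividing $b$; for each such $p$ one has $|r|_p=|a/b|_p=p^{\ord_p b}$ (since $\gcd(a,b)=1$ forces $\ord_p a=0$), so $\log^+|r|_p=\log p^{\ord_p b}$, and these sum to $\log|b|$ by unique factorization; meanwhile $|r|_\infty=|a|/|b|$, so $\log^+|r|_\infty=\log^+(|a|/|b|)=\max\{0,\log|a|-\log|b|\}$; and at every other finite place $|r|_p\le1$. Adding, if $|a|>|b|$ the total is $(\log|a|-\log|b|)+\log|b|=\log|a|$, and if $|a|\le|b|$ it is $0+\log|b|=\log|b|$, so in all cases $h(T)=\log\max\{|a|,|b|\}$. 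I expect the main obstacle to be the first step: making rigorous the passage from the entropy formula~\eqref{equation:entropy} on the compact quotient $\widehat H$ to the sum of local modules on the covering group $\mathbb A_{\mathcal S(H)}$. This requires knowing that the metric ball in~\eqref{equation:entropy} can be taken inside the fundamental domain $F$ (as arranged in Lemma~\ref{lemma:modules}) so that one iterate of $T$ corresponds honestly to multiplication by $r$ on $\mathbb A_{\mathcal S(H)}$, and then estimating the Haar measure of $\bigcap_{j=0}^{n-1}(x\mapsto r^{-j})(B_\epsilon)$ as a product of local volumes that shrink at rate $\min\{1,|r|_v^{-1}\}$ in each direction; the product over $v$ of these rates gives $\exp(-nh(T))$ with $h(T)$ as above. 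The remaining steps are routine arithmetic with absolute values.
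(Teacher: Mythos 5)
Your covering-space computation is essentially the argument the paper itself gives for the $\mathcal S$-integer case: lift the entropy calculation~\eqref{equation:entropy} to $\mathbb A_{\mathcal S(H)}$ using the fact that the quotient map is a local isometry, note that a small ball is a product $(-\epsilon,\epsilon)\times\prod_{p\in\mathcal S(H)}p^{n_p}\mathbb Z_p$, observe that multiplication by $r$ contracts the factor at a place $v$ at rate $\min\{1,\vert r\vert_v^{-1}\}$ so that each place contributes $\log^{+}\vert r\vert_v$, and then the arithmetic with the absolute values of $a$ and $b$ is identical. The obstacle you flag at the end (justifying the passage from $\widehat H$ to the covering group) is handled in the paper by the lifting principle of~\cite[Prop.~4.7]{ELW}, so that part of your plan is sound.

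The genuine gap is at your very first sentence: you identify $\widehat H$ with $\mathbb A_{\mathcal S(H)}/\delta(R_{\mathcal S(H)})$, but by Lemma~\ref{lemma:adaptedadelicconstruction} that quotient is $\widehat{R_{\mathcal S(H)}}$, and $H\cong R_{\mathcal S(H)}$ only when the set $\mathcal P(H)$ of primes of finite positive height is finite. For a general one-solenoid --- for example $k_2=\infty$ and $k_p=1$ for every odd prime $p$, with $T=T_2$ --- the group $H$ is a proper direct limit $\injlim H_i$ of fractional $R_{\mathcal S(H)}$-ideals, so $\widehat H$ is the corresponding inverse limit of solenoids and is not the adelic quotient you start from; as written, your argument computes the entropy of the automorphism of $\widehat{R_{\mathcal S(H)}}$, not of $\widehat H$. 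The paper is explicit about this point: it proves the formula for $H=R_{\mathcal S(H)}$ by exactly your computation and then appeals to~\cite[Prop.~3.1]{MR961739} to see that the entropy is unchanged in passing to the direct limit. To close the gap along your lines you could note that each $H_i$ is a cyclic $R_{\mathcal S(H)}$-module, hence isomorphic to $R_{\mathcal S(H)}$ by an isomorphism commuting with multiplication by $r$, so every finite stage has entropy $\log\max\{\vert a\vert,\vert b\vert\}$; what is then still needed, and what the cited result supplies, is that the entropy of the inverse limit of these systems equals the common value at the finite stages.
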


\begin{proof}
This formula is explained under the assumption that  $H=R_{\mathcal{S}(H)}$
by the following argument.
First, the calculation~\eqref{equation:entropy}
may be performed in the covering space~$\mathbb{A}_{\mathcal{S}(H)}$
because the quotient map
\[
\mathbb{A}_{\mathcal{S}(H)}
\rightarrow
\widehat{R_{\mathcal{S}(H)}}
\]
is a local isometry
(we refer to~\cite[Prop.~4.7]{ELW} for the details
of this general lifting principle).
Secondly, a small metric ball around~$0$
in the covering space~$\mathbb{A}_{\mathcal{S}(H)}$
takes the form
\[
(-\epsilon,\epsilon)\times\prod_{p\in\mathcal{S}(H)}p^{n_p}\mathbb Z_p,
\]
and the action~$\widetilde{T_r}$ of multiplication by~$r^{-1}$ on each
of the coordinates~$\mathbb R$ or~$\mathbb Q_p$ gives
\[
\bigcap_{j=0}^{n-1}T_{r}^{-j}(-\epsilon,\epsilon)=
\begin{cases}
(-\epsilon,\epsilon)&\mbox{if }\vert r\vert\le 1;\\
(-\epsilon\vert r\vert^{-(n-1)},\epsilon\vert r\vert^{-(n-1)})&
\mbox{if }\vert r\vert>1,
\end{cases}
\]
on the quasi-factor~$\mathbb R$, and
\[
\bigcap_{j=0}^{n-1}T_{r}^{-j}p^{n_p}\mathbb Z_p=
\begin{cases}
p^{n_p}\mathbb Z_p&\mbox{if }\vert r\vert_p\le 1;\\
\vert r\vert_p^{-(n-1)}p^{n_p}\mathbb Z_p&
\mbox{if }\vert r\vert>1,
\end{cases}
\]
showing the claimed formula.

In the general case an argument is needed to ensure that
the entropy is not changed by passing from~$R_{\mathcal{S}(H)}$ to the
direct limit~$H$, and
we refer to~\cite[Prop.~3.1]{MR961739} for the details.
\end{proof}

\subsection{Equivalence relations for one-solenoids}

We now consider how the equivalence relations from Definition~\ref{definition:handzetaequivalence} behave for
the subspace~$\mathcal{G}_1\subset\mathcal{G}$
consisting of
pairs~$(G,T)$ where~$T:G\to G$ is an
automorphism of a one-solenoid.
Thus we may assume that~$G$ is dual to a
group~$H=H((k_p))$, and~$T=T_{a/b}$ is dual to
the map~$x\mapsto\frac{a}{b}x$
with~$\gcd(a,b)=1$
on~$H$,
with the property that any prime~$p$
dividing~$ab$ has~$k_p=\infty$.

\begin{example}
It is clear that the entropy and the zeta function
do not together determine an element of~$\mathcal{G}_1$.
\begin{enumerate}
\item[\rm(1)] We have~$(\widehat{\mathbb Z[\frac12]},T_2)\sim_{\zeta}
(\widehat{\mathbb Z[\frac12]},T_{1/2})$ and~$(\widehat{\mathbb Z[\frac12]},T_2)\sim_{h}
(\widehat{\mathbb Z[\frac12]},T_{1/2})$.
\item[\rm(2)] By varying the group instead of the map, much
    larger joint equivalence classes may be found. As
    pointed out by Miles~\cite{MR2441142}, a consequence of
    Proposition~\ref{fixedpointformula} and
    Proposition~\ref{lemma:abramovformula} is that the set
\[
\left\{(G,T)\in\mathcal{G}_1\mid\zeta_{T}(z)=
\textstyle\frac{1-z}{1-2z}, h(T)=\log2\right\}
\]
is uncountable. To see
this, let~$\{S_{\lambda}\}_{\lambda\in\Lambda}$
be an uncountable set of infinite subsets of~$\mathbb P$, all containing~$2$,
with the
property that~$\vert S_{\lambda}\vartriangle
S_{\nu}\vert=\vert\mathbb N\vert$ for all~$\lambda\neq\nu$.
Associate to each~$S_{\lambda}$ the subgroup~$H_{\lambda}=H((k_p^{(\lambda)}))$
where
\[
k_p^{(\lambda)}=\begin{cases}
0&\mbox{if }p\notin S_{\lambda},\\
1&\mbox{if }p\in S_{\lambda}\setminus\{2\},\\
\infty&\mbox{if }p=2.
\end{cases}
\]
Then the automorphism dual to~$x\mapsto 2x$ on~$H_{\lambda}$
has zeta function~$\frac{1-z}{1-2z}$, and by
Theorem~\ref{theorem:baer} these are all algebraically, and
hence topologically, distinct systems.
\end{enumerate}
\end{example}

\begin{lemma}\label{lemma:zetadeterminesPinfinity}
If~$(G,T)\sim_{\zeta}(G',T')$ then~$\mathcal{S}(G)=\mathcal{S}(G')$.
\end{lemma}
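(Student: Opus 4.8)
The plan is to extract $\mathcal{S}(G)$ from the zeta function by examining the radius of convergence and, more precisely, the growth rate of $\fix_T(n)$ together with the $p$-adic contributions visible through congruence properties of $\fix_T(n)$. Recall from Proposition~\ref{fixedpointformula} that for $T = T_{a/b}$ on $G = \widehat{H}$ with $r = a/b$, we have $\fix_T(n) = \vert r^n - 1\vert \prod_{p \in \mathcal{S}(H)} \vert r^n - 1\vert_p$, and that every prime dividing $ab$ lies in $\mathcal{S}(H)$. Since $\zeta_T = \zeta_{T'}$ determines $\fix_T(n) = \fix_{T'}(n)$ for all $n$, it suffices to show that the sequence $(\fix_T(n))_{n \ge 1}$ determines the set $\mathcal{S}(G) = \mathcal{S}(H)$.

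First I would split $\mathcal{S}(H)$ into the primes dividing $ab$ and those that do not. For a prime $p \mid ab$: such a prime contributes $\vert r^n - 1\vert_p$, and since $\vert r\vert_p \ne 1$ we get $\vert r^n - 1\vert_p = \max\{\vert r\vert_p^n, 1\}$ or a similar clean expression, so these primes are detected through the entropy/growth rate $h(T) = \log\max\{\vert a\vert, \vert b\vert\}$ combined with the exact value $\fix_T(1) = \vert r - 1\vert \prod \vert r-1\vert_p$; more robustly, the prime factorization of $\fix_T(n)$ for small $n$ reveals which primes divide $a$ or $b$. For a prime $p \in \mathcal{S}(H)$ with $p \nmid ab$ (so $\vert r\vert_p = 1$): here $\vert r^n - 1\vert_p < 1$ exactly when $n$ is a multiple of $\ord_p(r \bmod p)$, the multiplicative order of $r$ modulo $p$, and then $\vert r^n - 1\vert_p = p^{-v_p(r^n-1)}$. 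The key point is that such a prime $p$ divides $\fix_T(n)$ for $n$ in a full arithmetic progression, whereas if $p \notin \mathcal{S}(H)$ then the factor $\vert r^n-1\vert_p$ is absent and $p$ divides $\fix_T(n) = \vert r^n-1\vert \cdot (\text{other primes})$ only through the archimedean factor $\vert r^n - 1\vert$, i.e. only when $p \mid (r^n - 1)$ as an integer relation — but this is precisely the same arithmetic condition. So I must distinguish the two cases by the \emph{power} of $p$ appearing: when $p \in \mathcal{S}(H)$, the $p$-adic valuation of $\fix_T(n)$ is $v_p(r^n-1)_{\text{arch}} + v_p(r^n-1)_{\text{adelic}}$, which by the product formula telescopes. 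Indeed, the product formula $\vert r^n - 1\vert \prod_{p} \vert r^n-1\vert_p = 1$ over \emph{all} places shows that $\fix_T(n) = \prod_{p \notin \mathcal{S}(H)} \vert r^n - 1\vert_p^{-1}$, so the set of primes dividing $\fix_T(n)$ for \emph{some} $n$ is exactly $\mathbb{P} \setminus \mathcal{S}(H)$ — wait, this needs care since $\vert r^n - 1\vert_p^{-1} = 1$ for $p \mid b$. Let me reorganize: $\fix_T(n)$ is an integer, and a prime $p$ divides it for some $n$ if and only if $p \notin \mathcal{S}(H)$ and $p \nmid a$ (equivalently $p \mid (r^n-1)$ in the appropriate sense for some $n$), unless $p \mid a$ in which case... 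The cleanest formulation: $p \in \mathcal{S}(H)$ if and only if $p \nmid \fix_T(n)$ for every $n \ge 1$, \emph{or} $p \mid ab$; and $p \mid ab$ is itself detectable.

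Thus the final argument runs: define, from the sequence $f_n = \fix_T(n)$ alone, the set $\mathcal{Q} = \{p \in \mathbb{P} : p \nmid f_n \text{ for all } n \ge 1\}$. I claim $\mathcal{S}(H) = \mathcal{Q} \cup \{p : p \mid ab\}$ and that $\{p : p \mid ab\}$ is recoverable (e.g. as the set of primes $p$ for which $p \mid f_n$ but the $p$-adic valuation $v_p(f_n)$ grows linearly in $n$ along the progression $\ord_p(r) \mid n$, reflecting $\vert r\vert_p \ne 1$, versus the bounded-plus-lifting-the-exponent behaviour when $\vert r\vert_p = 1$). Since all these quantities are functions of $(f_n)_n$ only, and $\zeta_T = \zeta_{T'}$ forces $f_n = f'_n$, we conclude $\mathcal{S}(G) = \mathcal{S}(G')$. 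The main obstacle is the bookkeeping for primes $p \mid ab$ versus $p \in \mathcal{S}(H) \setminus \{p : p\mid ab\}$: one must verify that the $p$-adic valuation $v_p(f_n)$ distinguishes the case $p \in \mathcal{S}(H)$ (where, by the product formula, $v_p(f_n) = -\log_p \vert r^n - 1\vert_p \ge 0$ and is \emph{positive} exactly on a nontrivial progression, with a definite lifting-the-exponent growth) from the case $p \notin \mathcal{S}(H)$ (where $v_p(f_n)$ comes purely from the archimedean factor $\vert r^n-1\vert$ and is governed by the same order condition but without the adelic reinforcement) — and the subtle point is that these two regimes give the \emph{same} set of $n$ on which $p \mid f_n$, so one genuinely needs the valuations, not just divisibility. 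I would handle this by writing, for $p \nmid ab$, $v_p(f_n) = v_p(r^n - 1)$ if $p \notin \mathcal{S}(H)$ versus $v_p(f_n) = v_p((r^n-1)\cdot p^{-v_p(r^n-1)}) + (\text{arch part})$... in fact the product formula gives $v_p(f_n) = 0$ for $p \in \mathcal{S}(H)$, $p\nmid ab$! That is the decisive simplification: summing $\log$ of the product formula, $\log\vert r^n-1\vert + \sum_q \log\vert r^n-1\vert_q = 0$, and $\fix_T(n)$ keeps only the factors $q \in \{\infty\}\cup\mathcal{S}(H)$, so $\log \fix_T(n) = -\sum_{q \notin \mathcal{S}(H), q \ne \infty}\log\vert r^n-1\vert_q = \sum_{q\notin\mathcal{S}(H)} v_q(r^n-1)\log q$, whence a prime $p$ divides $\fix_T(n)$ for some $n$ if and only if $p \notin \mathcal{S}(H)$ (taking $n$ with $\ord_p(r)\mid n$, using that $r$ is a unit mod such $p$ since $p \nmid ab$). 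Hence $\mathcal{S}(H) = \{p : p \nmid \fix_T(n) \ \forall n\}$ outright — no case split needed — and the lemma follows immediately. I would present this clean version, with the product-formula computation as the single key step.
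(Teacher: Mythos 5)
Your final ``clean version'' is correct and is essentially the paper's own argument: both identify $\mathcal{S}(G)$ as the complement of the set of primes dividing some $\fix_T(n)$, using that every prime dividing $ab$ lies in $\mathcal{S}(G)$ since $T$ is an automorphism, that the fixed-point formula together with the product formula shows no prime of $\mathcal{S}(G)$ divides any $\fix_T(n)$, and that any $p\notin\mathcal{S}(G)$ divides $\fix_T(n)$ for $n$ a multiple of the multiplicative order of $a/b$ modulo $p$ (the paper simply takes $n=p-1$, i.e.\ $p\divides a^{p-1}-b^{p-1}$). The long preliminary discussion of valuations and of primes dividing $ab$ is unnecessary, as your own final simplification shows.
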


\begin{proof}
Assume that~$(G,T)\in\mathcal{G}_1$ with~$T=T_{a/b}$ and~$G$
dual to~$H((k_p))$.
We claim that
\begin{equation}\label{equation:zetadeterminesPinfinity}
\{p\in\mathbb P\mid p\divides\fix_T(n)\mbox{ for some }n\ge1\}
=
\{p\in\mathbb P\mid p\notin\mathcal{S}(G)\mbox{ and }p\notdivides ab\}.
\end{equation}
Since by hypothesis~$T$ is an automorphism, we have
\[
p\divides ab\implies p\in\mathcal{S}(G)
\]
so~\eqref{equation:zetadeterminesPinfinity} gives the lemma.
To see~\eqref{equation:zetadeterminesPinfinity}, notice that
by Proposition~\ref{fixedpointformula}
no prime in~$\mathcal{S}(G)$
can divide any~$\fix_T(n)$, and any prime~$p\notin\mathcal{S}(G)$
will divide~$a^{p-1}-b^{p-1}$ and so will divide some~$\fix_T(n)$.
\end{proof}

\begin{problem}
The case of endomorphisms is slightly different, because
for example~$(\widehat{\mathbb Z},T_2)\sim_{\zeta}(\widehat{\mathbb Z[\frac12]},T_{2})
\sim_{\zeta}(\widehat{\mathbb Z[\frac12]},T_{1/2})$. Formulate
a version of Lemma~\ref{lemma:zetadeterminesPinfinity} for
endomorphisms of one-solenoids.
\end{problem}

The case of subrings, or equivalently of~$\mathcal{S}$-integer subgroups of~$\mathbb Q$,
has distinctive features. Let~$\overline{\mathcal{G}_1}$ denote the collection
of pairs~$(G,T)\in\mathcal{G}_1$ with the property that~$\widehat{G}$ is
a subring of~$\mathbb Q$. This means that~$k_p(G)$ is~$0$ or~$\infty$ for
any prime~$p$.

\begin{proposition}\label{proposition:sizeofequivalenceclassesinonesolenoids}
On the space~$\overline{\mathcal{G}_1}$,
\begin{enumerate}
\item[\rm(1)] $\sim_h$ has uncountable equivalence classes;
\item[\rm(2)] $\sim_{\zeta}$ has countable equivalence classes;
\item[\rm(3)] the joint relation~$\sim_{h}$ and~$\sim_{\zeta}$ has
finite equivalence classes.
\end{enumerate}
\end{proposition}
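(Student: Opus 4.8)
\section*{Proof proposal for Proposition~\ref{proposition:sizeofequivalenceclassesinonesolenoids}}

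The plan is to reduce all three assertions, via the structure theory developed above, to elementary counting of the data that parametrise $\overline{\mathcal{G}_1}$. The first step I would record is that any $(G,T)\in\overline{\mathcal{G}_1}$ is determined up to algebraic isomorphism by a pair $(\mathcal{S},r)$: here $\mathcal{S}=\mathcal{S}(\widehat G)\subseteq\mathbb P$ (the subring hypothesis forces $k_p(\widehat G)\in\{0,\infty\}$, so that $\widehat G\cong R_{\mathcal{S}}$), and $r=a/b\in\mathbb Q$ with $\gcd(a,b)=1$ defines $T=T_r$, subject to the admissibility requirement that every prime dividing $ab$ lie in $\mathcal{S}$. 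Two such groups satisfy $R_{\mathcal{S}}\cong R_{\mathcal{S}'}$ if and only if $\mathcal{S}=\mathcal{S}'$, since by Theorem~\ref{theorem:baer} they must be of the same type, and a discrepancy at even a single prime would be a discrepancy between $\infty$ and $0$, which no single type permits. Throughout I would count equivalence classes up to algebraic isomorphism, so that the stated cardinalities are meaningful.

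For part~(1) I would take $r=2$ and let $\mathcal{S}$ run over all subsets of $\mathbb P$ containing $2$. Multiplication by $2$ is an automorphism of $R_{\mathcal{S}}$ precisely because $2\in\mathcal{S}$, the resulting system $(\widehat{R_{\mathcal{S}}},T_2)$ is ergodic, and by Abramov's formula (Proposition~\ref{lemma:abramovformula}) it has entropy $\log 2$. By the previous paragraph these systems are pairwise non-isomorphic, hence, since on connected groups topological conjugacy is algebraic isomorphism, also pairwise non-conjugate; and there are $2^{\aleph_0}$ of them. Thus the $\sim_h$-class of $(\widehat{\mathbb Z[\frac12]},T_2)$ is uncountable.

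For part~(2) the essential ingredient is Lemma~\ref{lemma:zetadeterminesPinfinity}: if $(G,T)\sim_\zeta(G',T')$ inside $\overline{\mathcal{G}_1}$ then $\mathcal{S}(\widehat G')=\mathcal{S}(\widehat G)$, so $G'$ is determined up to isomorphism, and the only remaining parameter is the rational $r'$ defining $T'$; since $\mathbb Q$ is countable, the $\sim_\zeta$-class is countable. For part~(3) I would adjoin the constraint $h(T')=h(T)$, which by Abramov's formula fixes the value $m=\max\{|a'|,|b'|\}$, and for a fixed $m$ there are only finitely many coprime pairs $(a',b')$ with $\max\{|a'|,|b'|\}=m$, hence only finitely many admissible $r'$. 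As the group is simultaneously pinned down by Lemma~\ref{lemma:zetadeterminesPinfinity}, the joint equivalence class is finite.

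The substantive content is all upstream --- the adelic fixed-point formula (Proposition~\ref{fixedpointformula}), Abramov's formula, and above all Lemma~\ref{lemma:zetadeterminesPinfinity}, all of which I take as given --- so the only real care needed here is in the bookkeeping, and I do not expect a genuine obstacle. The one point that must be handled honestly is that $\zeta_T$ does \emph{not} determine $h(T)$: for $\mathcal{S}=\mathbb P$ one has $\widehat{R_{\mathcal{S}}}\cong\widehat{\mathbb Q}$ and $\zeta_{T_r}(z)=(1-z)^{-1}$ for every admissible $r$, so part~(2) genuinely permits a countably infinite class, whereas it is precisely the extra entropy invariant that collapses the admissible $r$ to a finite list in part~(3). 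This is why parts~(2) and~(3) require distinct arguments rather than a single uniform one.
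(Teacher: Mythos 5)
Your proposal is correct and follows essentially the same route as the paper: vary the set of inverted primes while keeping the map fixed (Abramov's formula plus Baer's classification) for~(1), use Lemma~\ref{lemma:zetadeterminesPinfinity} to pin down the group and leave only the countable choice of the rational for~(2), and add the entropy bound $\max\{\vert a\vert,\vert b\vert\}\le\exp h$ to cut that choice to finitely many rationals for~(3). The only cosmetic difference is that in~(1) you fix $r=2$ while the paper argues for a general admissible $a/b$ (so that every class, not just one, is seen to be uncountable), but your construction generalises verbatim.
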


\begin{proof}
The pair~$(\widehat{\mathbb Z[\frac{1}{ab}]},T_{a/b})$
has entropy~$\log\max\{\vert a\vert,\vert b\vert\}$,
and by Abramov's formula for any subset~$\mathcal{S}\subset\mathbb P\setminus\{p\mid p\divides ab\}$
of primes the pair~$(\widehat{R_\mathcal{S}[\frac{1}{ab}]},T_{a/b})$
has the same entropy, showing~(1).

By Lemma~\ref{lemma:zetadeterminesPinfinity}, the~$\sim_{\zeta}$
equivalence class determines the set of primes of infinite
height, and so the only parameter that can change
is the rational~$\frac{a}{b}$ defining the map~$T_{a/b}$,
and there are only countably many of these.

For~(3), notice that if~$(G_2,T_{a/b})$ has entropy~$h$ then
\[
\max\{\vert a\vert,\vert b\vert\}
\le\exp h,
\]
so there are only finitely many choices for the rational~$r$ defining
the map~$T_r$ with a given entropy. Lemma~\ref{lemma:zetadeterminesPinfinity}
shows that there is no further choice for a fixed
zeta function.
\end{proof}

\begin{example}\label{example:diversity}
Some of the diversity possible in Proposition~\ref{proposition:sizeofequivalenceclassesinonesolenoids}
is illustrated via simple examples.
\begin{enumerate}
\item[\rm(1)] For any~$r\in\mathbb Q\setminus\{0,\pm1\}$
the system~$(\widehat{\mathbb Q},T_r)$ has
zeta function~$\frac{1}{1-z}$, showing that the~$\sim_{\zeta}$
equivalence class may be infinite.
\item[\rm(2)] The~$\sim_{\zeta}$ equivalence class may be infinite
in a less degenerate way.
Let~$G$ be the group dual to~$\mathbb Z_{(3)}$,
the subgroup obtained by inverting all the primes
except~$3$.
Then Proposition~\ref{fixedpointformula} shows that
\[
\fix_{T_r}(n)=\vert r^n-1\vert_3^{-1}
\]
by the product formula
\begin{equation}\label{productformula}
\prod_{p\in\mathbb P\cup\{\infty\}}\vert t\vert_p
=
\vert t\vert
\negmedspace
\times
\negmedspace\negmedspace\negmedspace
\prod_{p\in\mathbb P:\vert t\vert_p\neq1}\vert t\vert_p
=
1
\end{equation}
for all~$t\in\mathbb Q\setminus\{0\}$.
It follows that~$(G,T_{2})\sim_{\zeta}(G,T_{r})$
if~$r=\frac{a}{b}$ where~$(a,b)$ is
of the form
\[
\left(
\textstyle\frac{9k+3m}{2}+2,\frac{9k-3m}{2}+1
\right)
\]
or
\[
\left(
\textstyle\frac{9k+3m}{2}+4,\frac{9k-3m}{2}+2
\right)
\]
for integers~$m,k$ of the same parity chosen with~$a,b$ coprime
and positive, or of the form
\[
\left(
\textstyle\frac{9k+3m+5}{2},\frac{9k-3m+1}{2}
\right)
\]
or
\[
\left(
\textstyle\frac{9k+3m+7}{2},\frac{9k-3m+5}{2}
\right)
\]
for integers~$m,k$ of opposite parity chosen with~$a,b$ coprime
and positive.
\item[\rm(3)] The~$\sim_{\zeta}$ equivalence class may be finite.
For example, if~$(G,T)\in\overline{\mathcal{G}_1}$ has
zeta function~$\frac{1-z}{1-2z}$, then we claim that~$(G,T)$
can only be~$(\widehat{\mathbb Z[\frac12]},T_2)$
or~$(\widehat{\mathbb Z[\frac12]},T_{1/2})$.
To see this, notice first that these both have
the claimed zeta function, and apply Lemma~\ref{lemma:zetadeterminesPinfinity}
to deduce that any
element of~$\overline{\mathcal{G}_1}$ with the
same zeta function has the form~$(\widehat{\mathbb Z[\frac12]},T_{a/b})$.
Since the map~$T_{a/b}$ is an automorphism, the only prime
dividing~$ab$ is~$2$, so~$\frac{a}{b}$ is~$\pm 2^k$ for some~$k\in\mathbb Z$
and we may use Proposition~\ref{fixedpointformula}
to calculate
\[
\zeta_{T}(z)=
\begin{cases}
\frac{1-z}{1-2^kz}&\mbox{if }T=T_{2^k};\\
\frac{1+z}{1-2^kz}&\mbox{if }T=T_{-2^k}.
\end{cases}
\]
\end{enumerate}
\end{example}

The constraints seen in
Example~\ref{example:diversity}(3) hold more generally
for systems with the property that~$\vert\mathcal{S}(\widehat{G})\vert$
is finite.

\begin{proposition}
Let~$(G_1,T_r),(G_2,T_s)\in\mathcal{G}_1$
have~$\vert\mathcal{S}(\widehat{G_1})\vert<\infty$
and
\[
(G_1,T_r)\sim_{\zeta}(G_2,T_s).
\]
Then~$r=s$
or~$r=s^{-1}$.
\end{proposition}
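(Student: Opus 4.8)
The plan is to recover the rational $r$, up to inversion, from the periodic-point sequence $\bigl(\fix_{T_r}(n)\bigr)_{n\ge1}$ by reading it along the rational primes, where the $p$-adic corrections in Proposition~\ref{fixedpointformula} collapse to a constant.

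First I would put the periodic-point counts into a workable shape. By Lemma~\ref{lemma:zetadeterminesPinfinity} the hypothesis $(G_1,T_r)\sim_\zeta(G_2,T_s)$ forces $\mathcal S(\widehat{G_1})=\mathcal S(\widehat{G_2})=:\mathcal S$, a \emph{finite} set of primes. Write $r=a/b$ and $s=c/d$ in lowest terms with $b,d>0$; since the $T$'s are ergodic automorphisms, $r,s\notin\{0,\pm1\}$, and every prime dividing $ab$, respectively $cd$, lies in $\mathcal S$. Substituting $r^n-1=(a^n-b^n)/b^n$ into Proposition~\ref{fixedpointformula}, applying the product formula~\eqref{productformula} to $b^n$, and using that $|a^n-b^n|_p=1$ whenever $p\mid ab$, one gets
\[
\fix_{T_r}(n)=|a^n-b^n|\!\!\prod_{p\in\mathcal S,\ p\nmid ab}\!\!|a^n-b^n|_p\qquad(n\ge1),
\]
together with the analogous formula for $s$.

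The main point is then to evaluate this along a rational prime $q$ larger than every element of $\mathcal S$. For such $q$ and any $p\in\mathcal S$ with $p\nmid ab$, the prime $p$ divides $a^q-b^q$ exactly when $q$ is a multiple of the multiplicative order of $ab^{-1}$ modulo $p$; as that order divides $p-1<q$, this happens if and only if $p\mid a-b$, and in that case the lifting-the-exponent lemma --- applicable since $q$ is odd and $q\neq p$ --- gives $|a^q-b^q|_p=|a-b|_p$, while if $p\nmid a-b$ both sides equal $1$. Hence for every large prime $q$ the correction factor is the constant
\[
C_r:=\!\!\prod_{p\in\mathcal S,\ p\nmid ab}\!\!|a-b|_p\ >\ 0,
\]
so $\fix_{T_r}(q)=C_r\,|a^q-b^q|$, and likewise $\fix_{T_s}(q)=C_s\,|c^q-d^q|$. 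This collapsing of the $p$-adic correction along primes is the step I expect to be the main obstacle; it is also what is really needed, since the bare entropy equality $\max\{|a|,|b|\}=\max\{|c|,|d|\}$, which Abramov's formula (Proposition~\ref{lemma:abramovformula}) would hand us for free, is too weak to pin down $r$.

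Finally I would compare the two and take a limit. From $C_r\,|a^q-b^q|=C_s\,|c^q-d^q|$ for all large primes $q$, taking $q$th roots and letting $q\to\infty$ gives $\max\{|a|,|b|\}=\max\{|c|,|d|\}=:M$. Put $m=\min\{|a|,|b|\}$ and $m'=\min\{|c|,|d|\}$, so $1\le m<M$ and $1\le m'<M$, the strict inequalities holding because $r,s\neq\pm1$. Since $q$ is odd, $|a^q-b^q|=M^q-m^q$ when $r>0$ and $|a^q-b^q|=M^q+m^q$ when $r<0$; writing $\varepsilon_r\in\{-1,+1\}$ accordingly, and likewise $\varepsilon_s$, the displayed identity reads
\[
C_r\bigl(1+\varepsilon_r(m/M)^q\bigr)=C_s\bigl(1+\varepsilon_s(m'/M)^q\bigr)
\]
for all large primes $q$. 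Letting $q\to\infty$ forces $C_r=C_s$; cancelling then gives $\varepsilon_r(m/M)^q=\varepsilon_s(m'/M)^q$ for all large $q$, and positivity of the two powers forces $\varepsilon_r=\varepsilon_s$ and $m=m'$. Hence $\{|a|,|b|\}=\{|c|,|d|\}=\{M,m\}$ and $r,s$ have the same sign, so both lie in the reciprocal pair $\bigl\{\operatorname{sign}(r)\cdot M/m,\ \operatorname{sign}(r)\cdot m/M\bigr\}$; therefore $r=s$ or $r=s^{-1}$.
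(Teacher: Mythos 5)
Your argument is correct, and while it rests on the same two pillars as the paper's proof -- Lemma~\ref{lemma:zetadeterminesPinfinity} to identify the common finite set~$\mathcal S$ of primes of infinite height, and Proposition~\ref{fixedpointformula} rewritten over the integers -- it takes a genuinely different route through the $\mathcal S$-adic part. The paper first invokes $\zeta_{T_r}=\zeta_{T_r^{-1}}$ to normalise $a_i>\vert b_i\vert$, equates the numerators by quoting the growth-rate result $\frac1n\log\fix_{T_r}(n)\to\log a_1$ from~\cite{MR1461206}, and then forces the remaining $p$-adic factors to be exactly~$1$ by restricting to $n$ coprime to the product of the multiplicative orders of $a/b_i$ at the primes of~$\mathcal S$ where it is a unit, so that $\fix_{T_r}(n)=\vert a^n-b_1^n\vert$ on that subsequence. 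You instead evaluate along primes $q$ exceeding every element of~$\mathcal S$, where the order argument plus lifting-the-exponent shows the correction collapses not to~$1$ but to the constant $C_r=\prod_{p\in\mathcal S,\ p\notdivides ab}\vert a-b\vert_p$, and a single elementary limit comparison then recovers $\max$, $\min$ and the common sign simultaneously; this dispenses with both the inversion normalisation and the imported growth estimate, making the proof self-contained, at the cost of a slightly more delicate collapsing step (at $p=2$ the standard LTE statement must be taken in its odd-exponent form, which is just the factorisation argument giving $v_2(a^q-b^q)=v_2(a-b)$ for odd $q$ and odd $a,b$, so your appeal is justified). A small bonus of your bookkeeping is that the sign information $\varepsilon_r=\varepsilon_s$ is carried through explicitly, so you land exactly on $r\in\{s,s^{-1}\}$, whereas the paper's final line only records $b_1=\pm b_2$ and leaves the exclusion of $r=-s$ implicit (it follows there by taking the admissible~$n$ odd).
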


\begin{proof}
First notice
that~$\mathcal{S}(\widehat{G_2})=\mathcal{S}(\widehat{G_1})$ by
Lemma~\ref{lemma:zetadeterminesPinfinity}. Denote this common
set of primes by~$\mathcal{S}$.
Write~$r=\frac{a_1}{b_1}$,~$s=\frac{a_2}{b_2}$
with~$\gcd(a_i,b_i)=1$ and~$a_i> 0$ for~$i=1,2$.
Since~$\zeta_{T_r}=\zeta_{T_r^{-1}}$
and~$\zeta_{T_s}=\zeta_{T_s^{-1}}$, without loss of generality
we may assume that~$a_i>\vert b_i\vert$ for~$i=1,2$. Then,
using Proposition~\ref{fixedpointformula}
(see~\cite[Th~6.1]{MR1461206} for the details), it follows that
\[
\log a_1=\lim_{n\to\infty}\frac{1}{n}\log\fix_{T_r}(n)
\mbox{ and }
\log a_2=\lim_{n\to\infty}\frac{1}{n}\log\fix_{T_s}(n),
\]
so that $a_1=a_2$. Let $a=a_1=a_2$,
\[
\mathcal{T}_i=\{p\in\mathcal{S}\mid\vert a\vert_p<\vert b_i\vert_p\}
\mbox{ and }
\mathcal{T}'_i=\{p\in\mathcal{S}\mid\vert a\vert_p>\vert b_i\vert_p\},
\]
for~$i=1,2$.
Notice that since~$\gcd(a,b_i)=1$,~$\vert b_i\vert_p=1$ for
all~$p\in \mathcal{T}_i$, and~$\vert a\vert_p=1$
for all~$p\in \mathcal{T}'_i$,~$i=1,2$.
Hence
\begin{equation}\label{like_a_natural_man}
\prod_{p\in\mathcal{T}_i\cup\mathcal{T}'_i}\left\vert\left({\frac{a}{b_i}}\right)^n-1\right\vert_p
 =
\prod_{p\in\mathcal{T}'_i}\left\vert\left(\frac{a}{b_i}\right)^n-1\right\vert_p
 =
\prod_{p\in\mathcal{T}'_i}\vert b_i\vert_p^{-n}
=
\vert b_i\vert^n,
\end{equation}
where the last equality follows by the
Artin product formula, as~$\vert b_i\vert_p=1$ for
all~$p\in\mathbb P\setminus\mathcal{T}'_i$.

For each~$p\in\mathcal{S}\setminus(\mathcal{T}_i\cup\mathcal{T}'_i)$,~$\vert\frac{a}{b_i}\vert_p=1$,
that is,~$\frac{a}{b_i}$ is a unit in the valuation
ring~$\mathbb{Z}_{(p)}$. Let~$m_i(p)$ denote the
multiplicative order of~$\frac{a}{b_i}$ in
the residue field~$\mathbb{Z}_{(p)}/(p)$, and
note that if~$m_i(p)\notdivides n$,
then~$\vert(\frac{a}{b_i})^n-1\vert_p=1$.
Therefore, whenever~$n$ is coprime to
\[
m=\prod_{i=1,2}\prod_{p\in \mathcal{S}\setminus(\mathcal{T}_i\cup\mathcal{T}'_i)}m_i(p),
\]
for both~$i=1$ and~$i=2$ we have
\begin{equation}\label{i_refuse_to_listen}
\prod_{p\in\mathcal{S}\setminus(\mathcal{T}_i\cup\mathcal{T}'_i)}
\left\vert\left(\frac{a}{b_i}\right)^n-1\right\vert_p
=1.
\end{equation}
Substituting~\eqref{like_a_natural_man}
and~\eqref{i_refuse_to_listen} into the
formula given by Proposition~\ref{fixedpointformula} yields
\[
\fix_{T_r}(n)
=
\vert a^n-b_1^n\vert
\mbox{ and }
\fix_{T_s}(n)
=
\vert a^n-b_2^n\vert,
\]
whenever~$\gcd(n,m)=1$.
Since~$a>0$ and~$a>\vert b_1\vert$,
it follows that~$b_1=\pm b_2$.
\end{proof}

\begin{corollary}
The~$\sim_{\zeta}$ equivalence class of~$(G,T_{a/b})$ in the subset
of~$\mathcal{G}_1$ with~$\mathcal{S}(\widehat{G})$ finite
has cardinality~$2^{\omega(a)}+2^{\omega(b)}$, where
as usual~$\omega(k)$ denotes the number of
distinct prime divisors of~$k$.
\end{corollary}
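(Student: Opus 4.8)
The plan is to split the $\sim_\zeta$-class of $(G,T_{a/b})$, where $\gcd(a,b)=1$, into the systems whose map is $T_{a/b}$ and those whose map is $T_{b/a}$, to count each piece, and to add. That these are the only two possibilities is exactly the preceding Proposition (the same argument goes through once \emph{automorphism} is relaxed to \emph{continuous surjective endomorphism}, which is the generality needed here): any $(G',T')\sim_\zeta(G,T_{a/b})$ with $\mathcal S(\widehat{G'})$ finite has $T'=T_{a/b}$ or $T'=T_{b/a}$. Since $T_{b/a}=T_{a/b}^{-1}$ whenever both are defined and $\fix$ is unchanged on inverting the map, interchanging the roles of $a$ and $b$ fixes both sides of the asserted identity, so it suffices to count the part with map $T_{a/b}$ and then invoke this symmetry for the part with map $T_{b/a}$.

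For the part with map $T_{a/b}$, a pair $(G',T_{a/b})$ is admissible precisely when $\widehat{G'}$ is the $\mathcal S'$-integer subgroup $R_{\mathcal S'}$ for a set of primes $\mathcal S'$ containing every prime dividing $b$, since this inclusion is exactly what makes multiplication by $a/b$ a surjective endomorphism of $R_{\mathcal S'}$, and by Theorem~\ref{theorem:baer} distinct $\mathcal S'$ yield non-isomorphic groups. Applying Lemmas~\ref{lemma:adaptedadelicconstruction} and~\ref{lemma:modules} as in the proof of Proposition~\ref{fixedpointformula} (multiplication by $b$ is invertible on $R_{\mathcal S'}$, so $(r^n-1)R_{\mathcal S'}=(a^n-b^n)R_{\mathcal S'}$) shows $\fix_{T_{a/b}}(n)=\vert a^n-b^n\vert\prod_{p\in\mathcal S'}\vert a^n-b^n\vert_p$. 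Equating this with the corresponding quantity for the base system for every $n$ is the same as $\prod_{p\in\mathcal S'}\vert a^n-b^n\vert_p=\prod_{p\in\mathcal S(\widehat G)}\vert a^n-b^n\vert_p$ for all $n$. A prime $p\mid ab$ is coprime to every $a^n-b^n$, so it contributes the trivial factor and its membership in $\mathcal S'$ is irrelevant to this identity; a prime $p\nmid ab$ divides $a^n-b^n$ for $n$ a multiple of the order of $a\bar b^{-1}$ in $(\mathbb Z/p)^\times$, so comparing $p$-adic valuations at such $n$ forces its membership in $\mathcal S'$ to match that in $\mathcal S(\widehat G)$. Combined with the requirement that every prime dividing $b$ lie in $\mathcal S'$, this determines $\mathcal S'$ apart from the choice of which primes dividing $a$ to include, and each such choice genuinely occurs. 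Hence this part of the class is in bijection with the power set of the set of primes dividing $a$ and has $2^{\omega(a)}$ members; by the symmetric argument (the map $T_{b/a}$ has denominator $a$ and numerator $b$) the part with map $T_{b/a}$ has $2^{\omega(b)}$ members; and the two parts are disjoint because $a/b\ne b/a$ for an ergodic system. Adding gives $2^{\omega(a)}+2^{\omega(b)}$.

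The step I expect to be the main obstacle is verifying that dropping a prime dividing $a$ from $\mathcal S'$ neither changes any $\fix_{T_{a/b}}(n)$ nor destroys the property of $T_{a/b}$ being a well-defined endomorphism of $\widehat{G'}$: this is precisely the mechanism that produces the summand $2^{\omega(a)}$ (and symmetrically $2^{\omega(b)}$), and it is what distinguishes this count from the two-element answer one gets on restricting to genuine automorphisms as in the preceding Proposition. Signs and the case $\vert a\vert<\vert b\vert$ also need a little care, but the fixed-point formula is phrased with absolute values and the whole argument is symmetric in $a$ and $b$, so they cause no real difficulty.
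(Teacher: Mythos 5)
The paper states this corollary without a written proof, so the only comparison available is with the chain of results it is meant to follow from (Lemma~\ref{lemma:zetadeterminesPinfinity}, Proposition~\ref{fixedpointformula} and the preceding proposition). Your mechanism is the right one, and indeed it is the only way the stated count can arise: for a genuine automorphism every prime dividing $ab$ must have infinite height, so among systems whose dual group is a subring of $\mathbb Q$ the $\sim_\zeta$-class is just $\{(\widehat{R_{\mathcal S}},T_{a/b}),(\widehat{R_{\mathcal S}},T_{b/a})\}$, of size two; the summands $2^{\omega(a)}$ and $2^{\omega(b)}$ appear exactly because a prime dividing the numerator may or may not be inverted once surjective endomorphisms are allowed, as you observe. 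Your prime-by-prime comparison (primes dividing $ab$ never divide $a^n-b^n$, while a prime $p\notdivides ab$ divides $a^n-b^n$ along multiples of the order of $a/b$ modulo $p$, forcing its membership in $\mathcal S'$ to match), the bijection with subsets of the prime divisors of $a$, the symmetric count for $T_{b/a}$, and the disjointness via $a/b\neq b/a$ for ergodic systems are all correct.

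Two points need more care than you give them. First, you assert that ``the same argument goes through'' for the endomorphism version of the preceding proposition, but the first step of that proof is Lemma~\ref{lemma:zetadeterminesPinfinity}, which is precisely what fails for endomorphisms --- this is the content of the Problem following that lemma, where $(\widehat{\mathbb Z},T_2)\sim_\zeta(\widehat{\mathbb Z[\frac12]},T_2)$. The repair is easy but should be explicit: treating the two finite sets of primes of infinite height separately, the estimate~\eqref{equation:basicestimate} still identifies the dominant term, and the coprime-$n$ device still gives $\fix(n)=\vert a^n-b_i^n\vert$ on a common infinite set of $n$, whence the map is $a/b$ or $b/a$; you cannot simply cite the proposition as stated. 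Second, your claim that $(G',T_{a/b})$ is ``admissible precisely when $\widehat{G'}=R_{\mathcal S'}$'' silently restricts the dual groups to subrings of $\mathbb Q$. Under the corollary's literal ambient set --- all of $\mathcal G_1$ with $\mathcal S(\widehat G)$ finite --- this is not automatic and the class is in fact uncountable, because the finite heights $k_p$ are invisible to the zeta function by Proposition~\ref{fixedpointformula}; the uncountable family in the example preceding Lemma~\ref{lemma:zetadeterminesPinfinity} has $\mathcal S=\{2\}$ finite and already exhibits this for $\frac{1-z}{1-2z}$. So the count $2^{\omega(a)}+2^{\omega(b)}$, and your proof of it, are correct exactly for the reading in which the dual groups are $\mathcal S$-integer subrings and the maps are surjective endomorphisms; this is evidently what the corollary intends (for $a/b=2$ it reproduces the three systems of the endomorphism Problem), but both restrictions need to be stated, not only the first.
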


%
%

\section{Counting closed orbits}

The most transparent combinatorial invariant of topological conjugacy
is the count of periodic orbits, which may be captured
using generating functions like the dynamical
zeta function.
For a class of ergodic group automorphisms with
polynomially bounded growth in the number of closed
orbits, a more natural generating function
is given by the orbit Dirichlet series~\cite{MR2550149}.
A consequence of the structure theorem for
expansive automorphisms of connected groups due
to Lawton~\cite{MR0391051} is the following.

\begin{theorem}[Lawton~\cite{MR0391051}]
If~$T:G\to G$ is an ergodic expansive automorphism
of a compact connected group, then~$\zeta_T(z)$ is
a rational function of~$z$.
\end{theorem}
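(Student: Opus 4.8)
The plan is to use Lawton's structure theorem to put~$(G,T)$ into a concrete arithmetic form and then compute~$\fix_T(n)$ explicitly, following the method Smale used for toral automorphisms and recalled in Example~\ref{fixedpointsexamples}(2); the extra non-archimedean places that appear for solenoids turn out to be harmless. First I would invoke Lawton's structure theorem to realise~$(G,T)$, up to algebraic isomorphism, as an~$\mathcal{S}$-integer system: there is an algebraic number field~$K$, a \emph{finite} set~$\mathcal{S}$ of places of~$K$ containing every archimedean place, and an~$\mathcal{S}$-unit~$\xi$ in the ring~$R_{\mathcal{S}}$ of~$\mathcal{S}$-integers of~$K$ such that~$\widehat G$ is isomorphic to a fractional~$R_{\mathcal{S}}$-ideal on which~$\widehat T$ acts as multiplication by~$\xi$. (If the characteristic polynomial of~$\widehat T$ is reducible one obtains a direct sum of finitely many such pieces, and~$\zeta_T$ is the corresponding product, so it suffices to treat one piece.) The essential point to extract from expansiveness, beyond finiteness of~$\mathcal{S}$, is that~$\vert\xi\vert_v\neq1$ for \emph{every}~$v\in\mathcal{S}$: if~$\vert\xi\vert_v=1$ for some~$v$, then the~$v$-adic (or archimedean) coordinate of the covering group carries a~$T$-invariant neighbourhood of the identity on which~$T$ acts isometrically, contradicting expansiveness. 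In particular~$\xi$ is not a root of unity, so~$T$ is ergodic.

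Next I would apply the fixed-point formula --- Proposition~\ref{fixedpointformula} when~$K=\mathbb Q$, and~\cite[Th.~6.1]{MR1461206} in general --- which gives
\[
\fix_T(n)=\prod_{v\in\mathcal{S}}\vert\xi^n-1\vert_v .
\]
For non-archimedean~$v$ the ultrametric inequality together with~$\vert\xi\vert_v\neq1$ give~$\vert\xi^n-1\vert_v=\vert\xi\vert_v^{\,n}$ for all~$n\ge1$ if~$\vert\xi\vert_v>1$, and~$\vert\xi^n-1\vert_v=1$ if~$\vert\xi\vert_v<1$, so
\[
\prod_{\substack{v\in\mathcal{S}\\ v\nmid\infty}}\vert\xi^n-1\vert_v=C^{\,n},
\qquad
C=\prod_{\substack{v\in\mathcal{S},\ v\nmid\infty\\ \vert\xi\vert_v>1}}\vert\xi\vert_v\in\mathbb Z_{\ge1}.
\]
The archimedean factors multiply to~$\prod_{v\mid\infty}\vert\xi^n-1\vert_v=\bigl\vert N_{K/\mathbb Q}(\xi^n-1)\bigr\vert$, and the only genuine calculation is to control the sign of this norm. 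Because~$\vert\sigma(\xi)\vert\neq1$ for every embedding~$\sigma\colon K\hookrightarrow\mathbb C$, the sign of~$\sigma(\xi)^n-1$ is~$-1$ for all~$n\ge1$ when~$\vert\sigma(\xi)\vert<1$ and equals~$\operatorname{sgn}(\sigma(\xi))^{n}$ when~$\vert\sigma(\xi)\vert>1$ (each conjugate pair of complex embeddings contributing the positive quantity~$\vert\sigma(\xi)^n-1\vert^{2}$). Hence~$\bigl\vert N_{K/\mathbb Q}(\xi^n-1)\bigr\vert=(-1)^{c}\delta^{\,n}N_{K/\mathbb Q}(\xi^n-1)$ for all~$n\ge1$, where~$c$ is the number of real embeddings with~$\vert\sigma(\xi)\vert<1$ and~$\delta\in\{\pm1\}$ is the product of the signs of the real embeddings with~$\vert\sigma(\xi)\vert>1$.

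Finally, writing~$d=[K:\mathbb Q]$ and~$\sigma_1,\dots,\sigma_d$ for the embeddings of~$K$, expanding
\[
N_{K/\mathbb Q}(\xi^n-1)=\prod_{i=1}^{d}\bigl(\sigma_i(\xi)^n-1\bigr)
=\sum_{J\subseteq\{1,\dots,d\}}(-1)^{d-\vert J\vert}\Bigl(\prod_{i\in J}\sigma_i(\xi)\Bigr)^{\!n}
\]
and combining with the non-archimedean and archimedean evaluations above yields, for all~$n\ge1$,
\[
\fix_T(n)=\sum_{J\subseteq\{1,\dots,d\}}m_J\,\gamma_J^{\,n},
\qquad
m_J=(-1)^{c}(-1)^{d-\vert J\vert}\in\{\pm1\},
\qquad
\gamma_J=\delta\,C\prod_{i\in J}\sigma_i(\xi),
\]
a finite combination of~$n$-th powers of algebraic numbers with coefficients~$\pm1$. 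Substituting into~\eqref{equation:zetafunction} and using~$\sum_{n\ge1}\gamma^{n}z^{n}/n=-\log(1-\gamma z)$ gives
\[
\zeta_T(z)=\prod_{J\subseteq\{1,\dots,d\}}(1-\gamma_J z)^{-m_J},
\]
a rational function of~$z$; since~$m_J$ depends only on~$\vert J\vert$ and the Galois group permutes the~$\gamma_J$ of any given size, the partial products~$\prod_{\vert J\vert=k}(1-\gamma_J z)$ in fact lie in~$\mathbb Q[z]$.

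The main obstacle is the first step: one must quote Lawton's classification precisely enough to know both that~$\mathcal{S}$ is finite and that~$\xi$ is hyperbolic at every place of~$\mathcal{S}$. Once the system is in~$\mathcal{S}$-integer form the remainder is the Smale-type computation above, in which the non-archimedean places are, if anything, easier to handle than the archimedean ones.
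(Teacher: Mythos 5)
Your argument is correct and follows essentially the route the paper itself indicates (the paper offers no proof beyond attributing the theorem to Lawton's structure theorem combined with the $\mathcal{S}$-integer fixed-point formula of Proposition~\ref{fixedpointformula} and~\cite{MR1461206}): reduce to pieces dual to fractional ideals over rings of $\mathcal{S}$-integers with $\vert\xi\vert_v\neq1$ at every place of a finite set, and then your Smale-type norm-and-sign computation giving $\fix_T(n)=\sum_J m_J\gamma_J^{\,n}$ and $\zeta_T(z)=\prod_J(1-\gamma_J z)^{-m_J}$ is exactly the standard argument. The one imprecision is the parenthetical reduction: in general the dual module admits only a filtration by such pieces, not a direct sum, but since $T^n-I$ is surjective on each connected piece the fixed-point counts are still multiplicative along the invariant exact sequences (as in Miles~\cite{MR2441142}), so the product formula for $\zeta_T$ and hence the proof go through unchanged.
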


It is clear that not every function can be a zeta function of a
map -- in particular the coefficients
in~\eqref{equation:zetafunction} must be non-negative. In fact
more is true, and it is shown in~\cite{MR1873399} that a
function~$\zeta(z)=\exp\sum_{n\ge1}\textstyle\frac{a_n}{n}z^n$
is the dynamical zeta function of some map if and only if
\begin{equation}\label{equation:puriward}
0\le\sum_{d\vert n}\mu\(\textstyle\frac{n}{d}\)a_d\equiv0\pmod{n}
\end{equation}
for all~$n\ge1$.
A beautiful remark of Windsor is that
the same condition is equivalent to being the dynamical
zeta function of a~$C^{\infty}$ diffeomorphism of the~$2$-torus~\cite{MR2422026}.
Thus, for example,
\[
\frac{{\rm e}^{-z^2}}{(1-2z)}
=\exp
\(
2z+z^2+\frac{2^3}{3}z^3+\frac{2^4}{4}z^4+\frac{2^5}{5}z^5+\cdots
\)
\]
has non-negative coefficients but cannot be a dynamical zeta
function of any map. England and Smyth~\cite{MR0307280}
characterized in combinatorial terms the property of being the
dynamical zeta function of an automorphism of a one-dimensional
solenoid, and Moss~\cite{moss} considered the more general
question of when a function could be the dynamical zeta
function of a group automorphism. Clearly the non-negativity
and congruence condition in~\eqref{equation:puriward} is not
sufficient, since the sequence~$\fix_T=(\fix_T(n))_{n\ge1}$ for
a group automorphism must be a divisibility sequence. Moss
showed that adding this further condition is also not
sufficient, even for linear recurrence sequences.

\begin{lemma}[Moss~\cite{moss}]\label{lemma:moss}
The function~$f(z)=\frac{1}{(1-z)(1-z^5)}$
is the dynamical zeta function of the
permutation~$\tau=(1)(23456)$ on the set~$\{1,2,3,4,5,6\}$,
which has the property that~$\fix_{\tau}$ is a divisibility sequence,
but~$f$ is not the zeta function of any group automorphism.
\end{lemma}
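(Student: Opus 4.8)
The plan is to verify the three assertions in order, each by an explicit computation. First I would check that $f(z)=\frac{1}{(1-z)(1-z^5)}$ really is the dynamical zeta function of $\tau=(1)(23456)$. The permutation $\tau$ has one fixed point and one $5$-cycle, so $\fix_{\tau}(n)=1$ when $5\nmid n$ and $\fix_{\tau}(n)=6$ when $5\mid n$. Plugging into~\eqref{equation:zetafunction} gives
\[
\zeta_{\tau}(z)=\exp\sum_{n\ge1}\frac{\fix_{\tau}(n)}{n}z^n
=\exp\(\sum_{n\ge1}\frac{1}{n}z^n+5\sum_{m\ge1}\frac{1}{5m}z^{5m}\)
=\frac{1}{1-z}\cdot\frac{1}{1-z^5},
\]
which is $f(z)$. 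Equivalently one can just observe that a permutation with $c_k$ cycles of length $k$ has zeta function $\prod_k(1-z^k)^{-c_k}$, and here $c_1=c_5=1$.

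Next I would confirm that $\fix_{\tau}$ is a divisibility sequence, i.e.\ $m\divides n\implies\fix_{\tau}(m)\divides\fix_{\tau}(n)$. This is immediate from the formula: if $5\nmid m$ then $\fix_{\tau}(m)=1$ divides everything; if $5\mid m$ then $5\mid n$ as well, so $\fix_{\tau}(m)=6=\fix_{\tau}(n)$. (One should also note the congruence condition~\eqref{equation:puriward} is satisfied, since $\fix_{\tau}$ arises from an actual map, but this is automatic.)

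The substantive step is the third: showing $f$ is not $\zeta_T$ for any group automorphism $T$. Here I would argue by contradiction. Suppose $(G,T)\in\mathcal{G}$ with $\zeta_T=f$, so $\fix_T(n)=\fix_{\tau}(n)$ for all $n$. Then $\fix_T(1)=1$, so $G$ has exactly one fixed point and $h(T)=0$ since $\limsup\fix_T(n)^{1/n}=1$. A zero-entropy compact group automorphism has the structure described by Seethoff (cited in the introduction); more directly, since $\fix_T(n)$ is bounded, the group $G$ must be such that the dual endomorphism has all eigenvalues roots of unity, forcing $\fix_T(n)$ to be eventually periodic in a way governed by a finite quotient. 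The key obstruction to exploit is that for a group automorphism, $\fix_T(n)=\lvert\{g:T^ng=g\}\rvert=\lvert\ker(T^n-\mathrm{id})\rvert$ on the (pro-finite, since entropy is $0$ and $\fix_T(1)=1$ forces $G$ to have no connected part contributing) group, and such kernels must satisfy a multiplicativity/consistency constraint: writing $F(n)=\fix_T(n)$, for coprime $m,n$ one needs $F(mn)=$ something compatible with $F(m),F(n)$ via the Chinese-remainder decomposition of the cyclic pieces, and in particular $F(5)=6$ with $F(1)=1$ is the problem — a group in which $\ker(T^5-\mathrm{id})$ has order $6$ but $\ker(T-\mathrm{id})$ is trivial would need $T$ acting on a group of order $6$ with no nonzero fixed vector and $T^5=\mathrm{id}$ there, i.e.\ $T$ of order dividing $5$ acting fixed-point-freely on $\mathbb{Z}/6\mathbb{Z}$; but $\aut(\mathbb{Z}/6) \cong (\mathbb{Z}/6)^\times$ has order $2$, which has no element of order $5$, and similarly there is no fixed-point-free automorphism of order $5$ of any group of order $6$. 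This contradiction is the heart of the matter. I expect the main obstacle to be phrasing this cleanly: one must first reduce to the case where $G$ is finite (or a finite-dimensional pro-finite group) by ruling out contributions from a solenoidal or infinite-dimensional part — which follows because $h(T)=0$ and $\fix_T(n)$ is bounded — and then the order-$6$ versus order-$5$ incompatibility finishes it. Moss's original argument should supply the precise bookkeeping for the reduction step.
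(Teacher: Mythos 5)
Your verification of the zeta function and the divisibility property is fine, and your final order-counting obstruction is essentially the right one (the paper finishes slightly differently: on the order-$6$ group all five non-identity elements form a single $T$-orbit, hence all have the same order, which no group of order $6$ allows; your observation that $\aut(\mathbb Z/6\mathbb Z)$ has order $2$ and so admits no element of order $5$ is an equivalent finish for the abelian case). The genuine gap is in your reduction to a finite group. You justify it by claiming $h(T)=0$ ``since $\limsup\fix_T(n)^{1/n}=1$'' and then appeal to Seethoff's structure theory and to eigenvalue/pro-finiteness considerations, deferring the ``precise bookkeeping'' to Moss. The entropy claim is false for group automorphisms: the automorphism $T_2$ dual to $x\mapsto 2x$ on $\widehat{\mathbb Q}$ has $\fix_{T_2}(n)=1$ for every $n$ (one closed orbit, by the product formula) yet entropy $\log 2$, so bounded fixed-point counts do not force zero entropy, and zero entropy in any case would not force $G$ to be finite. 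So as written the reduction step does not go through.

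The correct reduction is much simpler and needs no entropy or structure theory at all, which is why the paper dispatches it in one sentence: if $\zeta_T=f$ then the set $H=\{g\in G\mid T^5g=g\}$ is a closed $T$-invariant subgroup with exactly $\fix_T(5)=6$ elements, $T$ restricts to an automorphism of $H$ satisfying $T^5=I$ there, and $\fix_{T|_H}(1)\le\fix_T(1)=1$, so only the identity of $H$ is fixed. One never needs to rule out ``solenoidal or infinite-dimensional parts'' of $G$; the whole argument takes place inside this finite subgroup, and your order-$6$ versus order-$5$ incompatibility then yields the contradiction exactly as you intended. With that replacement your proposal matches the paper's proof; without it, the proof is incomplete at the step you yourself flagged.
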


\begin{proof} The given permutation~$\tau$
has
\[
(\fix_{\tau})=(1,1,1,1,6,1,1,1,1,6,\ldots),
\]
which is a divisibility sequence satisfying the
linear recurrence relation
\[
u_{n+5}=u_n
\]
for~$n\ge1$ with the
initial conditions
\[
u_1=u_2=u_3=u_4=1,u_5=6.
\]
To see that~$f$ cannot be the dynamical zeta function
of an abelian group automorphism, notice that if there is
a group automorphism~$T:G\to G$ with~$\zeta_{T}(z)=f(z)$
then there is such a realization with~$\vert G\vert=6$.
In this case~$T$ is an automorphism of~$G$ with~$T^5=I$, the
identity. If some~$g\in G\setminus\{0\}$ has orbit
under~$T$ of cardinality less than~$5$, then
there are integers~$0\le m<n\le 4$ with~$T^mx=T^nx$,
which implies that~$T^{n-m}x=x$ and so~$x=0$
since~$\vert n-m\vert\le4$.
It follows that any non-identity element of~$G$ has
an orbit of length~$5$ under the automorphism~$T$,
and so in particular every non-identity element of~$G$
has the same order, which is impossible.
\end{proof}

England and Smyth~\cite{MR0307280} showed that
\[
f(z)=\exp\sum_{n\ge1}\frac{a_n}{n}z^n
\]
is the dynamical zeta function of a group
automorphism dual to~$x\mapsto\frac{m}{n}x$ on
a one-solenoid if and only if
\begin{equation}\label{Turned cartwheels 'cross the floor}
a_k\divides n^k-m^k\mbox{ for all~$k\ge1$}
\end{equation}
and
\begin{equation}\label{I was feeling kinda seasick}
\gcd\(a_k,\textstyle\frac{n^{\ell}-m^{\ell}}{a_{\ell}}\)=1\mbox{ for all~$k\neq\ell$}.
\end{equation}
They also gave an example (which does not seem to be correct)
of a group automorphism with irrational zeta function
for an automorphism of a one-solenoid;
there are now several ways to see that these must exist.
In~\cite{MR1458718} it is shown that there are
uncountably many distinct zeta functions of
automorphisms dual to~$x\mapsto2x$ on a one-solenoid
(and hence most in cardinality are irrational functions); Everest, Stangoe
and the third author~\cite[Lem.~4.1]{MR2180241}
showed that the map dual to~$x\mapsto2x$ on~$\mathbb Z[\frac{1}{6}]$
has a natural boundary at~$\vert z\vert=\frac12$. In
order to relate the characterization from~\cite{MR0307280}
to the~$S$-integer constructions of Chothi, Everest
{\it et al.}~\cite{MR1461206}
outlined in Proposition~\ref{fixedpointformula},
we show that they are equivalent.

\begin{lemma}
Given coprime integers~$m,n$, a sequence~$(a_n)$ of positive integers satisfies~\eqref{Turned cartwheels 'cross the floor}
and~\eqref{I was feeling kinda seasick} if and only if there is a
subset of the primes~$\mathcal{S}\subset\mathbb P$ for which
\begin{equation}\label{But the crowd called out for more}
a_k=\vert n^k-m^k\vert\prod_{p\in \mathcal{S}}\vert n^k-m^k\vert_p
\end{equation}
for all~$k\ge1$.
\end{lemma}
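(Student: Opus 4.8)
The plan is to prove both implications by working prime-by-prime and exploiting the product formula~\eqref{productformula}. Set $b_k = n^k - m^k$ throughout, so that~\eqref{Turned cartwheels 'cross the floor} says $a_k \divides b_k$ and~\eqref{But the crowd called out for more} says $a_k = \vert b_k\vert \prod_{p\in\mathcal S}\vert b_k\vert_p$. For the backward implication, suppose such an $\mathcal S$ exists. By the product formula, $\vert b_k\vert\prod_{p\in\mathbb P}\vert b_k\vert_p = 1$, so $a_k = \prod_{p\notin\mathcal S}\vert b_k\vert_p^{-1}$, which is the integer $\prod_{p\notin\mathcal S}p^{\ord_p(b_k)}$; hence $a_k\divides b_k$, giving~\eqref{Turned cartwheels 'cross the floor}, and moreover $b_k/a_k = \prod_{p\in\mathcal S}p^{\ord_p(b_k)}$ is supported on primes in $\mathcal S$. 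For~\eqref{I was feeling kinda seasick}, note $a_k$ is supported on primes \emph{not} in $\mathcal S$ while $b_\ell/a_\ell$ is supported on primes in $\mathcal S$, so the two are automatically coprime for every pair $k,\ell$ (in fact for all $k,\ell$, not merely $k\neq\ell$).

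For the forward implication, suppose $(a_k)$ satisfies~\eqref{Turned cartwheels 'cross the floor} and~\eqref{I was feeling kinda seasick}. The idea is to read off $\mathcal S$ as the set of primes that "belong to the cofactor side": define
\[
\mathcal S = \{p\in\mathbb P \mid p\divides b_k/a_k \text{ for some } k\ge1\}.
\]
I then need to show that for every $k$ and every prime $p$, either $\ord_p(a_k) = \ord_p(b_k)$ (when $p\notin\mathcal S$) or $\ord_p(a_k) = 0$ (when $p\in\mathcal S$); equivalently $\vert b_k\vert_p = \vert a_k\vert_p$ for $p\notin\mathcal S$ and $\vert a_k\vert_p = 1$ for $p\in\mathcal S$, which by the product formula is exactly~\eqref{But the crowd called out for more}. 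Fix a prime $p$. If $p\in\mathcal S$, pick $\ell$ with $p\divides b_\ell/a_\ell$; then for any $k\neq\ell$, condition~\eqref{I was feeling kinda seasick} forces $p\notdivides a_k$, and the case $k=\ell$ is immediate since $p$ divides the cofactor $b_\ell/a_\ell$, not necessarily $a_\ell$ — here I must be slightly careful and instead argue that if $p$ divided both $a_\ell$ and $b_\ell/a_\ell$ I can still produce a second index. Concretely, since $m,n$ are coprime, at most one of them is divisible by $p$; the sequence $(b_k \bmod p^N)$ is eventually periodic in $k$ with period dividing $p^{N-1}\cdot\ord$, so I can always find $k\neq\ell$ with $\ord_p(b_k)$ as large as I like, and then $p\divides b_k/a_k$ would follow once I know $\ord_p(a_k)$ is bounded — which~\eqref{I was feeling kinda seasick} applied to the pair $(k,\ell)$ delivers. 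Conversely if $p\notin\mathcal S$, then $\ord_p(a_k) = \ord_p(b_k)$ for every $k$ by definition of $\mathcal S$, since $\ord_p(b_k/a_k)=0$ always.

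The main obstacle is the boundary case in the forward direction where a prime $p$ might a priori divide $a_\ell$ to some order and $b_\ell/a_\ell$ to some order simultaneously, so that $p\in\mathcal S$ but $\ord_p(a_\ell)\neq0$ for that particular $\ell$ — this is precisely what~\eqref{I was feeling kinda seasick}, which excludes only $k\neq\ell$, does not immediately rule out. The resolution, which is the technical heart of the argument, is to use the divisibility-sequence structure of $(b_k)$: because $b_d\divides b_k$ whenever $d\divides k$, and because $\gcd(m,n)=1$ controls the $p$-adic valuations via a lifting-the-exponent style formula, one shows that if $\ord_p(a_\ell)>0$ for some $\ell$ then $\ord_p(a_k)>0$ for infinitely many $k$ (e.g.\ for all multiples of the appropriate index), while simultaneously $\ord_p(b_{k}/a_{k})>0$ for some such $k'$, yielding two distinct indices $k,k'$ with $p\divides a_k$ and $p\divides b_{k'}/a_{k'}$, contradicting~\eqref{I was feeling kinda seasick}. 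Thus no prime can lie on both sides, $\mathcal S$ is well-defined by the dichotomy above, and~\eqref{But the crowd called out for more} follows.
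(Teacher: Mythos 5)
Your proposal is correct and takes essentially the same route as the paper: the backward direction via the product formula, and the forward direction by defining $\mathcal{S}$ to be the set of primes dividing some cofactor $(n^{\ell}-m^{\ell})/a_{\ell}$ and then comparing $p$-adic valuations of $a_k$ and $n^k-m^k$ prime by prime. The boundary case $k=\ell$ that you labour over (and which the paper's terser proof passes over in silence) needs none of the periodicity or lifting-the-exponent apparatus, and certainly not the unjustified claim that $p\divides a_k$ for infinitely many $k$: since $(n^{\ell}-m^{\ell})\divides(n^{2\ell}-m^{2\ell})$, applying~\eqref{I was feeling kinda seasick} to the pair $(2\ell,\ell)$ gives $p\notdivides a_{2\ell}$, hence $p\divides(n^{2\ell}-m^{2\ell})/a_{2\ell}$, and then applying it to $(\ell,2\ell)$ contradicts $p\divides a_{\ell}$.
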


\begin{proof}
Clearly the existence of a set~$\mathcal{S}\subset\mathbb P$ with~\eqref{But the crowd called out for more}
implies~\eqref{Turned cartwheels 'cross the floor}
and~\eqref{I was feeling kinda seasick}.

Assume~\eqref{Turned cartwheels 'cross the floor}
and~\eqref{I was feeling kinda seasick}, let
\[
\mathcal{S}=\{p\in\mathbb P\mid p\divides\textstyle\frac{n^{\ell}-m^{\ell}}{a_{\ell}}\mbox{ for some }\ell\ge1\},
\]
and write~$b_k=\vert n^k-m^k\vert\prod_{p\in \mathcal{S}}\vert n^k-m^k\vert_p$. If some prime power~$p^e$
divides some~$a_k$, then~$p^e\vert n^k-m^k$ so by~\eqref{I was feeling kinda seasick}~$p^e\divides b_k$,
and so~$a_k\divides b_k$ for all~$k\ge1$.
If some prime power~$p^e$ divides some~$b_k$,
then~$p^e\divides n^k-m^k$ and~$p\notin \mathcal{S}$ and hence~$p^e\divides a_k$. Thus~$b_k\divides a_k$ for all~$k\ge1$.
\end{proof}

Miles gave a general
characterization for finite entropy automorphisms on finite-dimensional
groups. In order to state this, recall from Weil~\cite{MR0234930}
that a global field (or~$\mathbb{A}$-field) is
an algebraic number field or a function field
of transcendence degree one over a finite field.
For a global field~$K$ we write~$\mathbb{P}(K)$ for
the set of places of~$K$, and~$\mathbb{P}_{\infty}(K)$ for
the set of infinite places of~$K$

\begin{theorem}[Miles~\cite{MR2441142}]
If~$T:G\to G$ is an ergodic
automorphism of a finite-dimensional
compact abelian group with finite entropy, then
there exist global fields~$K_1,\dots,K_n$, sets of finite places~$P_i\subset
\mathbb{P}(K_i)$ and
elements~$\xi_i\in K_i$, no one of
which is a root of unity for~$i=1,\dots,n$, such that
\[
\fix_{T}(k)=\prod_{i=1}^{n}\prod_{v\in P_i}
\vert\xi_i^k-1\vert_{v}^{-1}.
\]
\end{theorem}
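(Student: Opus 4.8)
The plan is to dualise, turn the computation of $\fix_T(k)$ into an index computation inside a direct sum of global fields, and then evaluate that index place by place using the adelic machinery already set up in Lemmas~\ref{lemma:adaptedadelicconstruction} and~\ref{lemma:modules}.

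\emph{Step 1: dualisation and reduction to a filtration.} Let $M=\widehat G$, carrying the $\mathbb Z[t^{\pm1}]$-module structure $t\cdot m=\widehat T(m)$; finite-dimensionality of $G$ means $M$ has finite torsion-free rank, and since $\widehat T$ is an automorphism $t$ acts invertibly. By Pontryagin duality the group of $T^k$-fixed points is dual to $M/(\widehat T^{\,k}-1)M$, so $\fix_T(k)=|M/(\widehat T^{\,k}-1)M|$. Ergodicity of $T$ is equivalent to $\widehat T^{\,k}-1$ being injective on $M$ for every $k\ge1$, and together with the finite rank (and, for the torsion part, the finite entropy) this forces each quotient $M/(\widehat T^{\,k}-1)M$ to be finite. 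For an $\widehat T$-stable short exact sequence $0\to M'\to M\to M''\to0$ the snake lemma then gives $|M/(\widehat T^{\,k}-1)M|=|M'/(\widehat T^{\,k}-1)M'|\cdot|M''/(\widehat T^{\,k}-1)M''|$, so it suffices to evaluate the index on the successive quotients of a convenient $\widehat T$-stable filtration of $M$.

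\emph{Step 2: filtering into one-dimensional pieces over global fields.} First split off the torsion subgroup $M_{\mathrm{tors}}$. On $M/M_{\mathrm{tors}}$ tensor with $\mathbb Q$ and pick a composition series of the finite-dimensional $\mathbb Q[t^{\pm1}]$-module $M\otimes\mathbb Q$; its simple factors are of the form $\mathbb Q[t]/(f_j)$ with $f_j\in\mathbb Q[x]$ irreducible and $f_j(0)\ne0$. Pulling this filtration back to $M/M_{\mathrm{tors}}$ produces $\widehat T$-stable quotients, the $j$-th of which is a full-rank additive subgroup $L_j$ of the number field $K_j:=\mathbb Q(\xi_j)$, where $\xi_j$ (the image of $t$, a root of $f_j$) acts on $L_j$ by multiplication, and ergodicity forces $f_j$ non-cyclotomic, so $\xi_j$ is not a root of unity. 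For $M_{\mathrm{tors}}$ one argues analogously: finite entropy confines it, modulo a finite submodule, to finitely many rational primes $p$, and for each such $p$ the positive-entropy part must be torsion-free over $\mathbb F_p[t^{\pm1}]$ (a finitely generated torsion $\mathbb F_p[t^{\pm1}]$-module is finite, so $\widehat T$ would act with finite order on it, contradicting ergodicity); tensoring with $\mathbb F_p(t)$ and taking a composition series yields global function fields $K_j$ and transcendental, hence non-root-of-unity, elements $\xi_j=t$, together with full-rank $\xi_j^{\pm1}$-stable subgroups $L_j\subset K_j$.

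\emph{Step 3: a local--global index formula.} Each quotient $L_j$ is a full-rank, $\xi_j^{\pm1}$-stable additive subgroup of a global field $K_j$, hence commensurable with a fractional $\mathcal O_{S_j}$-ideal, where $S_j$ consists of the archimedean places of $K_j$, the finite places $v$ with $|\xi_j|_v\ne1$, and the finite places at which the closure $L_{j,v}$ is all of the completion $K_{j,v}$. The diagonal map embeds $L_j$ as a uniform lattice in the restricted product $\mathbb A_{S_j}=\prod'_{v\in S_j}K_{j,v}$ exactly as in Lemma~\ref{lemma:adaptedadelicconstruction}, multiplication by $c:=\xi_j^k-1$ is a continuous automorphism of $\mathbb A_{S_j}$ preserving $L_j$ with finite cokernel, and its module is $\prod_{v\in S_j}|c|_v$; so Lemma~\ref{lemma:modules} yields
\[
|L_j/(\xi_j^k-1)L_j|=\prod_{v\in S_j}|\xi_j^k-1|_v .
\]
The product formula rewrites the right-hand side as $\prod_{v\in P_j}|\xi_j^k-1|_v^{-1}$, where $P_j$ is the set of finite places of $K_j$ \emph{not} lying in $S_j$; for fixed $k$ all but finitely many factors are $1$, so this is a finite product even when $P_j$ is infinite. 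Multiplying over the factors of the filtration from Step~2 and relabelling gives $\fix_T(k)=\prod_{i=1}^n\prod_{v\in P_i}|\xi_i^k-1|_v^{-1}$, as asserted.

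\emph{Main obstacle.} The genuinely delicate point is Step~2 for $M_{\mathrm{tors}}$: one must show that \emph{finite entropy}, not merely finite dimension, restricts $M_{\mathrm{tors}}$ to finitely many primes and, after discarding a finite submodule, to a finitely generated module over each $\mathbb F_p[t^{\pm1}]$ whose positive-entropy part is torsion-free, so that the function-field decomposition is available at all; without this the torsion subgroup can be uncontrollably infinitely generated. A secondary technical check is that commensurable choices of $L_j$ and the various possible composition series all produce the same sets $P_j$ and the same products --- this holds because multiplication by $\xi_j^k-1$ scales each local lattice $\varpi_v^{n_v}\mathcal O_v$ by exactly $|\xi_j^k-1|_v$, independently of $n_v$.
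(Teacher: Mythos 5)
The decisive gap is in your Step 3. The reduction of $|L_j/(\xi_j^k-1)L_j|$ rests on the claim that a full-rank $\xi_j^{\pm1}$-stable subgroup $L_j\subset K_j$ is commensurable with a fractional $\mathcal{O}_{S_j}$-ideal, where $S_j$ contains only the archimedean places, the places with $|\xi_j|_v\neq1$, and the places where the closure of $L_j$ is all of $K_{j,v}$. That claim is false precisely in the cases that give the theorem its content, namely when infinitely many places have finite positive height: take $K_j=\mathbb{Q}$, $\xi_j=2$ and $L_j=H((k_p))$ with $k_2=\infty$ and $k_p=1$ for every odd prime $p$. Then $S_j=\{\infty,2\}$, but $L_j$ is not commensurable with any fractional $\mathbb{Z}[\frac12]$-ideal, and the diagonal image of $L_j$ in $\mathbb{R}\times\mathbb{Q}_2$ is not even discrete (the elements $\frac{2}{p}\in L_j$ lie in $2\mathbb{Z}_2$ and are arbitrarily small in $\mathbb{R}$), so neither Lemma~\ref{lemma:adaptedadelicconstruction} nor Lemma~\ref{lemma:modules} applies to it. The displayed identity $|L_j/(\xi_j^k-1)L_j|=\prod_{v\in S_j}|\xi_j^k-1|_v$ is in fact true, but proving it requires showing that the global index equals the product over all finite places $v$ of the local indices $|N_v/(\xi_j^k-1)N_v|$, where $N_v$ is the closure of $L_j$ in $K_{j,v}$ --- in particular that each of the infinitely many finite-height places contributes exactly $|\xi_j^k-1|_v^{-1}$ --- and then invoking the product formula. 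This local--global step is exactly what the direct-limit argument in the proof of Proposition~\ref{fixedpointformula} supplies in rank one (``multiplication by $p^{k_p}$ remains non-invertible in the direct limit''), and what Miles' place-by-place valuation-theoretic computation supplies in general; your closing remark that multiplication by $\xi_j^k-1$ scales each local lattice independently of $n_v$ gestures at it, but it is the missing argument, not a secondary check.

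There is a second genuine gap, which you flag as the ``main obstacle'' but do not close: the treatment of $M_{\mathrm{tors}}$ in Step 2. The $p$-primary component need not be an $\mathbb{F}_p[t^{\pm1}]$-module at all (for example the dual of the full shift on $\mathbb{Z}/p^2\mathbb{Z}$ is $(\mathbb{Z}/p^2\mathbb{Z})[t^{\pm1}]$), so ``torsion-free over $\mathbb{F}_p[t^{\pm1}]$'' does not parse until you have filtered by $p^iM_{\mathrm{tors}}$ and shown, using finite entropy, that this filtration terminates; and the reduction to finitely many primes needs the quantitative input that a nontrivial ergodic factor dual to a $p$-primary module has entropy at least $\log p$ (via Yuzvinskii's addition formula), which is asserted rather than proved. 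A smaller, fixable point: the snake-lemma multiplicativity needs $\widehat{T}^k-1$ to be injective on each subquotient, i.e.\ that no $f_j$ is cyclotomic; ergodicity does give this, but only after a short extra argument (if $\widehat{T}^k\bar m=\bar m$ in $M/M_{\mathrm{tors}}$ and $d$ is the order of $\widehat{T}^km-m\in M_{\mathrm{tors}}$, then $dm$ is a nonzero element of $M$ with finite orbit). For comparison, the paper itself does not prove this statement --- it quotes Miles~\cite{MR2441142}, whose proof is the valuation-theoretic, characteristic-independent local analysis indicated above rather than an adelic covering-space argument --- so while your Steps 1--2 skeleton (dualise, filter, reduce to full-rank subgroups of global fields) has the right shape, both the local--global index identity and the entropy control of the torsion part still need genuine proofs.
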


We now turn to another range problem, concerned with the growth
in orbits. This may be thought of as a combinatorial analogue of
Lehmer's problem.

\begin{theorem}[Ward~\cite{MR2085157}]\label{theorem:allgrowthrates}
For any~$C\in[0,\infty]$ there is a compact group automorphism~$T:G\to G$
with
\[
\frac{1}{n}\log\fix_{T}(n)\longrightarrow C
\]
as~$n\to\infty$.
\end{theorem}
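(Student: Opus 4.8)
The plan is to realise each prescribed growth rate $C\in[0,\infty]$ using an $\mathcal S$-integer automorphism of a one-solenoid, exploiting the fixed-point formula from Proposition~\ref{fixedpointformula}. The starting observation is that by Abramov's formula (Proposition~\ref{lemma:abramovformula}), the map $T_r$ dual to $x\mapsto rx$ on $\widehat{H}$ has entropy $\log\max\{|a|,|b|\}$ when $r=a/b$ in lowest terms, and Proposition~\ref{fixedpointformula} gives
\[
\tfrac1n\log\fix_{T_r}(n)=\tfrac1n\log|r^n-1|+\sum_{p\in\mathcal S(H)}\tfrac1n\log|r^n-1|_p.
\]
If $|r|>1$ the archimedean term tends to $\log|r|$, while each $p$-adic term is nonnegative and contributes extra growth exactly when $p$ divides some $r^n-1$. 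So the strategy is: first handle the finite values $C$, then $C=0$ and $C=\infty$ as limiting/degenerate cases, choosing the group $H$ (equivalently the set $\mathcal S(H)$ of primes of infinite height) so that the $p$-adic contributions push the growth rate up to the target $C$.

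For a finite target $C$, I would fix $r=a/b$ with $\log|a|=\log\max\{|a|,|b|\}\le C$ (for instance $r$ a small integer, or $r=1$ if one prefers the purely $p$-adic mechanism) and then engineer the prime set $\mathcal S=\mathcal S(H)$ so that $\limsup$ and $\liminf$ of $n^{-1}\sum_{p\in\mathcal S}\log|r^n-1|_p$ both equal $C-\log|a|$. The key number-theoretic input is control of the quantity $\sum_{p\in\mathcal S}\log|r^n-1|_p=\log\bigl(\prod_{p\in\mathcal S}|r^n-1|_p^{-1}\bigr)$; this is essentially the $\mathcal S$-part of $r^n-1$, and one has the elementary estimate that each $p$-adic valuation $v_p(r^n-1)$ is at most of order $\log n$ beyond the first occurrence, together with the fact that a prime $p$ (with $p\nmid ab$) divides $r^n-1$ only when $n$ is a multiple of the multiplicative order of $r$ mod $p$. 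By choosing $\mathcal S$ to be a carefully spaced infinite set of primes (spacing the orders $\ord_p(r)$ so that along a subsequence of $n$ many of them contribute simultaneously, and along the complementary subsequence few do, or — if one wants the honest limit to exist — so that the average contribution stabilises), one can tune the growth rate to any prescribed $C$. I expect the cleanest route is to make the limit exist by taking $\mathcal S$ dense enough in a controlled way: include enough primes $p$ with $\ord_p(r)$ dividing a fixed integer (giving a guaranteed linear-in-$n$ contribution along an arithmetic progression) and interpolate; alternatively, allow $\limsup>\liminf$ in an intermediate construction and then average, but the theorem as stated wants the genuine limit.

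The cases $C=\infty$ and $C=0$ are the easy endpoints: for $C=\infty$ take $\mathcal S=\mathbb P$ (so $H=\mathbb Q$) — but then $\fix_{T_r}(n)=1$ by the product formula, which is the wrong direction — so instead one pushes $|r|\to\infty$, e.g.\ realise $C=\infty$ by the single automorphism $x\mapsto$ (something of infinite entropy) or, staying in one-solenoids, observe $n^{-1}\log\fix_{T_a}(n)\to\log|a|$ and let $a$ grow; but since we need one fixed automorphism with limit exactly $\infty$, use instead an automorphism of an infinite-dimensional group (e.g.\ a product or a shift-type construction) whose fixed-point counts grow superexponentially — this is where I expect the only real subtlety, since one-solenoids give only finite growth rates and the $C=\infty$ case forces a genuinely different (non-one-solenoid, or infinite-dimensional) example. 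For $C=0$, take $r=\pm1$-type degeneracies or more simply any ergodic automorphism is excluded (ergodicity forces positive entropy), so $C=0$ must again be achieved by a non-ergodic or infinite-rank example, e.g.\ an automorphism of $\prod\mathbb T$ permuting coordinates in long cycles so that $\fix(n)$ grows subexponentially. The main obstacle, then, is not the generic finite $C$ (which is a matter of prime-selection bookkeeping with the valuation estimates above) but assembling the two boundary cases $C=0$ and $C=\infty$ from a single explicit system; I would resolve $C=\infty$ by a direct sum / product of one-solenoid automorphisms with entropies $\log a_j\to\infty$ arranged so the combined fixed-point count has $n^{-1}\log\fix(n)\to\infty$, and $C=0$ by an infinite permutation (shift on long cycles) where $\fix(n)$ is eventually constant or polynomially controlled.
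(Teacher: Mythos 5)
Your plan goes in a direction the paper explicitly flags as open, and it contains a sign error that undermines the mechanism you rely on. For an automorphism $T_{a/b}$ of a one-solenoid, Proposition~\ref{fixedpointformula} gives $\fix_T(n)=\vert r^n-1\vert\prod_{p\in\mathcal S(H)}\vert r^n-1\vert_p$, and for every $p\in\mathcal S(H)$ not dividing $ab$ one has $\vert r^n-1\vert_p\le1$: these factors can only \emph{delete} periodic points, while the primes dividing $b$ merely restore $\vert a^n-b^n\vert$. Hence on any one-solenoid $\frac1n\log\fix_{T_{a/b}}(n)\le\log\max\{\vert a\vert,\vert b\vert\}+\littleo(1)$, the logarithm of an integer, so you cannot ``push the growth rate up to the target $C$'' by enlarging $\mathcal S$; you can only cut down from the entropy, and forcing the honest limit to equal an arbitrary intermediate $C$ means discarding, for \emph{every} large $n$, a prescribed proportion of the prime content of $a^n-b^n$. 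That requires control of the factorization of the primitive parts of these numbers (which may, for all we know, be prime infinitely often, leaving no fine-grained choice), and no such argument is known: this is exactly the open problem stated after Theorem~\ref{theorem:newaddedconnectedrates} (``Does Theorem~\ref{theorem:allgrowthrates} still hold if the compact groups are required to be connected?''), which the paper expects to be as intractable as Lehmer's problem, with only partial results in~\cite{MR1619569} and~\cite{MR1458718}. So the ``prime-selection bookkeeping'' step, which carries the whole finite-$C$ case, is precisely the missing (and unknown) argument.

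The cited proof in Ward~\cite{MR2085157} avoids all of this by abandoning both connectedness and ergodicity: the groups used are zero-dimensional and the automorphisms non-ergodic, as the paper notes immediately after the theorem statement, and that freedom is what makes arbitrary limits $C$ attainable. Two further points on your endpoints: $C=0$ is in fact trivial within $\mathcal{G}_1$, since the automorphism dual to $x\mapsto2x$ on $\mathbb Q$ has $\fix_T(n)=1$ for all $n$ by the product formula~\eqref{productformula}; by contrast your suggested coordinate permutation of $\prod\mathbb T$ in long cycles fails, because whenever a cycle length divides $n$ the fixed-point set of $T^n$ contains a positive-dimensional subtorus, so $\fix_T(n)$ is infinite rather than subexponential. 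For $C=\infty$ an infinite product of one-solenoid automorphisms can indeed work, but it needs the care taken in the proof of Theorem~\ref{theorem:newaddedconnectedrates} (choosing the factors so that $\fix_T(n)$ stays finite); with $\theta_n={\rm e}^{n^2}$ that construction already gives the $C=\infty$ case.
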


\subsection{Sets of primes and constructions in one-solenoids}

The examples constructed
in
Theorem~\ref{theorem:allgrowthrates}
are not really satisfactory, since the groups used are
zero-dimensional and (worse) the automorphisms are not ergodic.
As a result it is natural to ask about the range of various
invariants for ergodic automorphisms of connected groups.
For rapid growth and a crude approximation, it is possible to
exhibit growth in periodic orbits in a controlled way on
connected groups, as illustrated in the next result
(the proof of which we postpone briefly).

\begin{theorem}\label{theorem:newaddedconnectedrates}
Let~$\theta=(\theta_n)$ be a sequence of positive
integers with
\[
\frac{n}{\log\theta_n}\longrightarrow 0
\]
as~$n\to\infty$. Then there is an automorphism~$T:X\to X$
of a connected compact group with
\[
\frac{\log\fix_T(n)}{\log\theta_n}\longrightarrow1
\]
as~$n\to\infty$.
\end{theorem}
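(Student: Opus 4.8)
The plan is to realize $X$ as a one-solenoid (so that ergodicity and connectedness come for free) and to choose the map $T = T_{a}$ dual to $x \mapsto ax$ on a suitable subgroup $H = H((k_p))$ of $\mathbb{Q}$, using Proposition~\ref{fixedpointformula} to compute $\fix_T(n)$ explicitly. Fix the simplest possible integer, say $a = 2$, so that $\mathcal{S}(H)$ must contain $2$ and the automorphism condition is satisfied. By Proposition~\ref{fixedpointformula} we then have
\[
\fix_T(n) = (2^n - 1)\prod_{p \in \mathcal{S}(H)}\lvert 2^n - 1\rvert_p,
\]
so the $p$-adic factors for $p \in \mathcal{S}(H)\setminus\{2\}$ can only \emph{decrease} $\fix_T(n)$ below $2^n-1$. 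Since $\log(2^n-1) \sim n\log 2$ and $n/\log\theta_n \to 0$, the target $\log\theta_n$ grows \emph{faster} than any such $\fix_T(n)$ could with a fixed integer $a$; so a fixed linear map on a one-solenoid cannot work, and this is the crux of the difficulty.

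To get around this I would instead let the ``map'' vary with scale by passing to a group that is \emph{not} a one-solenoid but still connected — for instance a countable product or a more elaborate direct-limit construction — or, more in keeping with the methods of the excerpt, build $X$ as a one-solenoid but allow $\widehat{G}$ to have infinitely many primes of \emph{finite positive} height, i.e. work with a genuine type rather than an $\mathcal{S}$-integer subgroup. The key observation is that the general fixed-point count for an automorphism $T$ of a one-solenoid dual to $x \mapsto rx$ on $H = H((k_p))$ measures, at each prime $p \in \mathcal{P}(H)$ with $0 < k_p < \infty$, only the quotient $\mathbb{Z}[\frac1p]/\mathbb{Z}$ truncated at level $k_p$; but since $\fix_T$ is governed by $H/(r^n-1)H \cong R_{\mathcal{S}(H)}/(r^n-1)R_{\mathcal{S}(H)}$ (as in the proof of Proposition~\ref{fixedpointformula}), the finite-height primes do not contribute. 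Hence the only way to make $\fix_T(n)$ outpace $2^n$ is to introduce \emph{additional direct-limit factors}: take $X = \widehat{H}$ for $H \leqslant \mathbb{Q}^{d}$ or, cleaner, take a sequence of one-solenoids $X_j$ with automorphisms $T_j$ whose entropies grow, and let $X = \prod_j X_j$ with $T = \prod_j T_j$; then $\fix_T(n) = \prod_j \fix_{T_j}(n)$ and by choosing the $T_j$ (e.g. $T_j$ dual to $x \mapsto a_j x$ on $\widehat{\mathbb{Z}[\frac{1}{a_j}]}$, so $\fix_{T_j}(n) = a_j^n - 1$) with the $a_j$ increasing sufficiently rapidly we can arrange
\[
\log\fix_T(n) = \sum_{j} \log(a_j^n - 1)
\]
to have any prescribed growth rate exceeding linear; connectedness is preserved under products of connected groups, and ergodicity holds as soon as no $a_j$ is a root of unity (indeed a product of ergodic automorphisms of compact groups is ergodic, since each $a_j^k \neq 1$).

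The main steps, then, are: (1) reduce to constructing $T$ on a product $X = \prod_{j \geqslant 1} \widehat{\mathbb{Z}[\frac{1}{a_j}]}$ with $T = \prod_j T_{a_j}$, recording $\fix_T(n) = \prod_{j}(a_j^n - 1)$ from Proposition~\ref{fixedpointformula}; (2) given $\theta = (\theta_n)$ with $n/\log\theta_n \to 0$, choose the integers $a_1 < a_2 < \cdots$ inductively so that for each $n$ the partial sum $\sum_{j \leqslant N(n)}\log(a_j^n-1)$ is pinched between $(1-\epsilon_n)\log\theta_n$ and $(1+\epsilon_n)\log\theta_n$ for a suitable $\epsilon_n \to 0$, using that adjoining one more factor $a_{j}$ adds $n\log a_j$, a quantity we control, while the tail contributes negligibly for the relevant range of $n$; (3) verify $\log\fix_T(n)/\log\theta_n \to 1$ from this pinching, and check connectedness and ergodicity of $(X, T)$. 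The main obstacle is step~(2): the diagonal bookkeeping, since the \emph{same} infinite product has to hit a moving target simultaneously for every $n$, so the inductive choice of the $a_j$ must be arranged so that early factors (fixed once chosen) do not overshoot for large $n$ while later factors (not yet ``active'' for small $n$ because $a_j^n - 1$ is enormous only once $j$ is large) fill in the gap — this requires interleaving the choice of $a_j$ with a choice of thresholds $N(n)$, and the condition $n/\log\theta_n \to 0$ is exactly what gives enough room to do this. The rest is routine.
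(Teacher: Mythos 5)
Your overall architecture --- an infinite product of one-dimensional solenoid automorphisms, with factors adjoined inductively so that $\log\fix_T(n)$ is pinched near $\log\theta_n$ --- is exactly the shape of the argument the paper uses, but your choice of building blocks breaks it. If the $j$th factor is $T_{a_j}$ on $\widehat{\mathbb Z[\frac{1}{a_j}]}$, then $\fix_{T_{a_j}}(n)=\vert a_j^n-1\vert\ge a_j-1\ge 2$ for \emph{every} $n\ge1$, not just for large $n$: your parenthetical claim that a later factor is ``not yet active for small $n$'' is backwards, since $a_j^n-1$ is already enormous at $n=1$. Consequently each newly adjoined factor multiplies the already-settled counts $\fix_T(1),\fix_T(2),\dots$ by huge amounts (wrecking the pinching you set up at earlier stages), and worse, once infinitely many factors are present the product $\prod_j(a_j^n-1)$ diverges, so $\fix_T(n)$ is infinite for every $n$ and the quantity $\log\fix_T(n)/\log\theta_n$ is not even defined. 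Note also that infinitely many factors are genuinely needed, since any finite product has finite entropy and hence $\log\fix_T(n)=\bigo(n)$, while $\log\theta_n/n\to\infty$; so the divergence problem cannot be dodged by stopping early.

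The missing idea is a supply of connected building blocks whose low-period counts are trivial. The paper takes $T_k$ dual to $x\mapsto 2x$ on $\mathbb Z[\frac1p:p\in S_k]$ with
\[
S_k=\{p\in\mathbb P\mid p\divides 2^m-1\mbox{ for some }m<k\}\cup\{2\},
\]
so that by Proposition~\ref{fixedpointformula} every prime factor of $2^m-1$ for $m<k$ is killed $p$-adically, giving $\fix_{T_k}(n)=1$ for all $n<k$, while the estimate~\eqref{equation:basicestimate} still gives $\frac1n\log\fix_{T_k}(n)\to\log2$. With these blocks the inductive bookkeeping you describe actually closes: at stage $n$ one adjoins $n_k$ copies of $(X_k,T_k)$ to hit $\theta_n$ to within a multiplicative error of at most $\fix_{T_{n-1}}(n)\le 2^n$, all later factors contribute nothing to periods already treated, each $\fix_T(n)$ is a finite product, and the error gives $1\le\log\fix_T(n)/\log\theta_n\le 1+n\log2/\log\theta_n\to1$, which is exactly where the hypothesis $n/\log\theta_n\to0$ is used. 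Without some such mechanism for suppressing small-period points in the later factors, your step (2) cannot be carried out.
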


With more
effort, taking advantage of known estimates for the size
of the primitive part of linear recurrence sequence, it is
likely that this result can be improved, but it is unlikely
that any such approach will lead to an (unexpected)
positive answer to the following.

\begin{problem}
Does Theorem~\ref{theorem:allgrowthrates} still hold
if the compact groups are required to be connected?
\end{problem}

It is expected that this reduces directly to Lehmer's problem,
and is as a result equally intractable. A refined version (in
light of the expected negative answer) is the following;
background and partial results in this direction may be
found in some work of the third author~\cite{MR1619569},~\cite{MR1458718},
exposing in particular some of the Diophantine
problems that arise.

\begin{problem}
Characterize the set
\[
\left\{
\limsup_{n\to\infty}\frac{1}{n}\log\fix_T(n)\mid
(G,T)\in\mathcal{G}\mbox{ and }G\mbox{ is connected}
\right\}
\]
of real numbers.
\end{problem}

As we have seen in Proposition~\ref{fixedpointformula},
orbit-growth questions on the space~$\mathcal{G}_1$
essentially amount to questions about rational numbers
and sets of primes.
In this section we will give an overview
of what is known in this setting specifically
for the map dual to~$x\mapsto 2x$ on subrings
of~$\mathbb Q$. Thus the systems are parameterised by
subsets of~$\mathbb P$, and Figure~\ref{figure}
gives a guide to the results assembled here on
the Hasse diagram of subsets of~$\mathbb P$, or equivalently
of subrings of~$\mathbb Q$.

\begin{figure}[htbp]\begin{center}
\psfrag{rat}{$\zeta_T$ rational}\psfrag{irrat}{$\zeta_T$ has natural}
\psfrag{irrat2}{boundary~$\vert z\vert={\rm e}^{-h}$}\psfrag{trivial}{one orbit}
\psfrag{dirichlet}{D-rational orbit}
\psfrag{dirichlet2}{Dirichlet series}
\psfrag{1}{continuum 1}\psfrag{2}{continuum 2}\psfrag{3}{continuum 3}
\psfrag{Z}{$\mathbb Z$}\psfrag{Z2}{$\mathbb Z[\frac12]$}\psfrag{Z3}{$\mathbb Z[\frac13]$}
\psfrag{Z5}{$\mathbb Z[\frac15]$}\psfrag{Z23}{$\mathbb Z[\frac16]$}\psfrag{Z35}{$\mathbb Z[\frac{1}{15}]$}\psfrag{Z25}{$\mathbb Z[\frac{1}{10}]$}
\psfrag{Q}{$\mathbb Q$}\psfrag{Q2}{$\mathbb Q_{(2)}$}\psfrag{Q3}{$\mathbb Q_{(3)}$}
\psfrag{Q5}{$\mathbb Q_{(5)}$}\psfrag{Q23}{$\mathbb Q_{(2,3)}$}
\psfrag{Q25}{$\mathbb Q_{(2,5)}$}\psfrag{Q35}{$\mathbb Q_{(3,5)}$}
\psfrag{k}{$k$}\psfrag{delta}{$\delta$}
\psfrag{k2}{$k$}\psfrag{r}{$r$}\psfrag{kappa}{$\kappa$}
\scalebox{1.0}[1.0]{\includegraphics{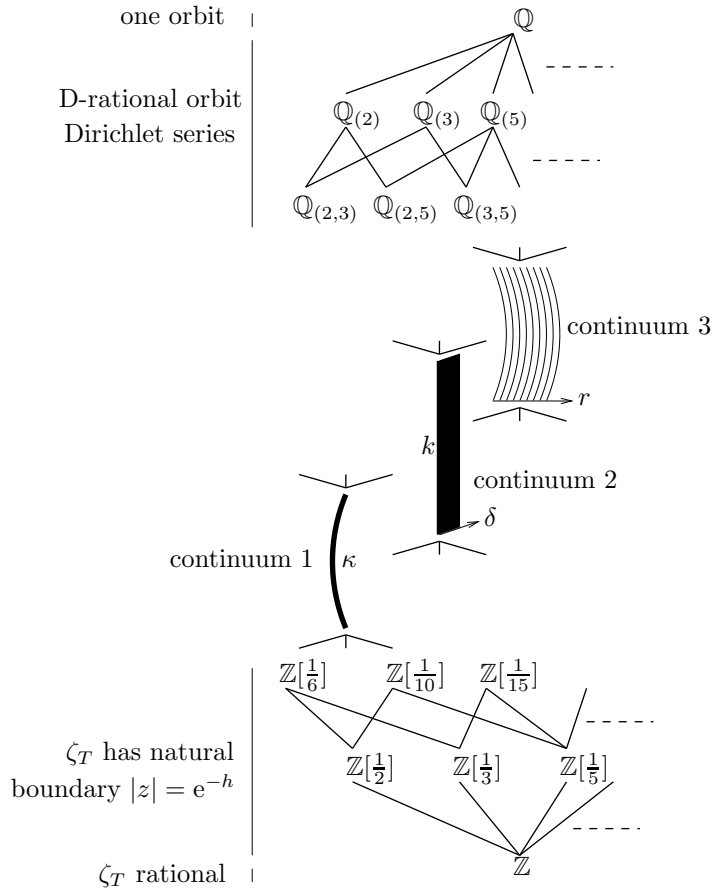}}
\caption{\label{figure}The Hasse diagram of subrings of~$\mathbb Q$.}
\end{center}
\end{figure}

At the bottom of Figure~\ref{figure} the compact group
is the circle, and strictly speaking this has no ergodic
automorphisms. However, as seen in Example~\ref{fixedpointsexamples},
if we permit the endomorphisms~$x\mapsto ax$ (or invert
only those primes appearing in~$a$) then the
zeta functions arising are rational.

The first observation one can make about the lower
part of Figure~\ref{figure} (subrings of~$\mathbb Q$
in which~$k_p=\infty$ for only finitely many primes~$p$)
concerns the exponential growth rate of periodic points.
If~$S\subset\mathbb P$ has~$\vert S\vert<\infty$, then
there are constants~$A,B>0$ with
\begin{equation}\label{equation:basicestimate}
\frac{A}{n^B}\le\prod_{p\in S}\vert a^n-b^n\vert_p\le1
\end{equation}
for all~$n\ge1$
(see~\cite[Th~6.1]{MR1461206} for the details).
A simple consequence of
Proposition~\eqref{fixedpointformula} and the estimate~\eqref{equation:basicestimate}
gives the construction needed for Theorem~\ref{theorem:newaddedconnectedrates}.

\begin{proof}[Proof of Theorem~\ref{theorem:newaddedconnectedrates}]
Define a sequence~$(S_k)$ of sets of primes by
\[
S_k=\{p\in\mathbb P\mid
p\divides 2^n-1\mbox{ for some }n<k\}\cup\{2\}.
\]
Thus~$S_1=\{2\}$,~$S_2=\{2,3\}$,~$S_3=\{2,3,7\}$, and so on,
and let~$T_k:X_k\to X_k$ be the automorphism
dual to~$x\mapsto 2x$ on~$\mathbb Z[\frac{1}{p}:p\in S_k]$.
By Proposition~\ref{fixedpointformula}
and~\eqref{equation:basicestimate}
it follows that~$\fix_{T_k}(n)=1$ for~$n<k$
and~$\lim_{n\to\infty}(1/n)\log\fix_{T_k}(n)=\log2$.
It may be helpful to
have in mind that the sequence of
fixed point counts for these systems begins
as follows:
\begin{align*}
\fix_{T_1}&=(1,3,7,15,31,\dots)\\
\fix_{T_2}&=(1,1,7,5,31,\dots)\\
\fix_{T_3}&=(1,1,1,5,31,\dots)\\
\fix_{T_4}&=(1,1,1,1,31,\dots),
\end{align*}
and so on.
We now inductively construct an infinite-dimensional
solenoid~$X$ together with an automorphism~$T$ as follows,
starting at~$2$ for convenience.

At the first stage, let~$n_2=\lceil\log_3\theta_2\rceil$
and let~$(X^{(2)},T^{(2)})$ be the~$n_2$-fold Cartesian
product of copies of~$(X_1,T_1)$.
By construction,
\begin{equation}\label{threeacttragedy}
1\le\frac{\fix_{T^{(2)}}(2)}{\theta_2}\le3=\fix_{T_1}(2).
\end{equation}
At the next stage, let~$n_3=\lceil\log_{7}\theta_3\rceil-n_2$ (and assume,
as we may, that this is non-negative by amending the early terms of the
sequence~$\theta$), and let~$(X^{(3)},T^{(3)})$ be the
product of~$(X^{(2)},T^{(2)})$
and the~$n_3$-fold
Cartesian product of copies of~$(X_2,T_2)$.
By construction, we retain~\eqref{threeacttragedy}
but also have
\begin{equation*}
1\le\frac{\fix_{T^{(3)}}(3)}{\theta_3}\le7=\fix_{T_2}(3).
\end{equation*}
We continue in this way,
multiplying at the~$n$th stage by a number of
copies of~$(X_k,T_k)$ to approximate as well
as possible the number of points of period~$n$ sought,
eventually producing an
infinite product of one-dimensional solenoids~$T$
with an automorphism~$T$ so that
\[
1\le\frac{\fix_{T}(n)}{\theta_n}\le\fix_{T_{n-1}}(n)\le2^n.
\]
Thus
\[
1\le\frac{\log\fix_{T}(n)}{\log\theta_n}\le1+\frac{n\log2}{\log\theta_n},
\]
giving the result by the growth-rate assumption on~$\theta$.
\end{proof}

\begin{example}\label{example:stangoe}
The map~$T$ dual to~$x\mapsto2x$ on~$\mathbb Z[\frac16]$ has
\[
\zeta_{T}(z)=\exp\sum_{n\ge1}\frac{\vert 2^n-1\vert\vert 2^n-1\vert_3}{n}z^n.
\]
The radius of convergence of~$\zeta_T$ is~$\frac{1}{2}$
by~\eqref{equation:basicestimate},
and a simple argument from~\cite[Lem.~4.1]{MR2180241} shows that
the series defining~$\zeta_T(z)$ has a zero at all points
of the form~$\frac{1}{2}{\rm e}^{2\pi{\rm i}j/3^r}$ for~$j\in\mathbb Z$
and~$r\ge1$, so~$\vert z\vert=\frac12$ is a natural boundary.
\end{example}

The appearance of a natural boundary for the zeta function
at~$\frac12=\exp(-h(T))$ prevents
an analysis of~$\pi_T$
and~$\mertens_T$ using analytic methods.
On the other hand, the dense set of
singular points
on the circle~$\vert z\vert=\frac12$
form a discrete set in~$\mathbb C_3$ (the completion
of the algebraic closure of~$\mathbb Q_3$), raising
the following question.

\begin{problem}\label{moonshine1}
For the map in Example~\ref{example:stangoe}
develop a notion of a dynamical
zeta function~$\zeta_T:\mathbb C_3\rightarrow\mathbb C_3$
in such a way that it has a meromorphic extension
beyond its radius of convergence, and
exploit this to give analytic or Tauberian proofs
of statements like
\[
\mertens_T(X)=\textstyle\frac58\log N+C_T+\bigo(1/N).
\]
\end{problem}

There are several difficulties involved here,
starting with the fact that the formal power
series~$\exp$ on~$\mathbb Q_3$ has radius
of convergence~$3^{-1/2}<1$; we refer
to Koblitz~\cite{MR754003} for the background needed
to address this problem.

\begin{problem}
For a system in~$\mathcal{G}_{1}$ for which the set of primes
of infinite height is finite, show that if the zeta function is
irrational, then it has natural boundary at~${\rm e}^{-h}$.
\end{problem}

More generally, there is some reason to suspect a stronger
result (see~\cite{bmw} for more on the methods involved
here, and some partial results). In the disconnected case it is
known that the zeta function is typically
non-algebraic~\cite{MR1702897}.

\begin{problem}[Bell, Miles and Ward~\cite{bmw}]
Is there a P{\'o}lya--Carlson dichotomy for
zeta functions of automorphisms of compact
groups? That is, is it true that such a zeta function is either
rational or has a natural boundary at its circle of
convergence?
\end{problem}

Results of~\cite{MR2339472} give precise information about the
systems at the lower end of Figure~\ref{figure}, and in the
special case of one-solenoids they take the following form.

\begin{theorem}[Everest, Miles, Stevens and Ward~\cite{MR2339472}]\label{theorem:natural}
If~$T$ is an automorphism in~$\mathcal{G}_1$
with a finite set of primes of infinite
height, then there are constants~$k_T$
in~$\mathbb Q$ and~$C_T>0$
with
\[
\mertens_T(n)=k_T\log N+C_T+\bigo(1/N).
\]
\end{theorem}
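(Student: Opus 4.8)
The plan is to strip $\mertens_T$ down to an elementary harmonic sum twisted by $p$-adic absolute values at a finite set of primes, and then to evaluate that sum by breaking it into arithmetic progressions. \textbf{Normalisation.} Since $\zeta_{T_r}=\zeta_{T_{1/r}}$ and $h(T_r)=h(T_{1/r})$, and since $\mertens_T$ is determined by $\zeta_T$ together with $h(T)$, we may assume $T=T_r$ with $r=a/b$ in lowest terms and $a>\vert b\vert\ge1$ (the degenerate case $\vert r\vert=1$ being excluded), so that $h(T)=\log a$ by Proposition~\ref{lemma:abramovformula}. Write $\mathcal S=\mathcal S(\widehat G)$; this is finite by hypothesis, and since $T_r$ is an automorphism every prime dividing $ab$ lies in $\mathcal S$. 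Partition $\mathcal S$ into the primes dividing $b$, the primes dividing $a$, and the remaining finite set $\mathcal Q$, at each of which $r$ is a unit. For $p\mid b$ one has $\vert r\vert_p>1$, so $\vert r^n-1\vert_p=\vert b\vert_p^{-n}$; for $p\mid a$ one has $\vert r\vert_p<1$, so $\vert r^n-1\vert_p=1$. Feeding $\prod_{p\mid b}\vert b\vert_p^{-n}=\vert b\vert^{n}$ (a case of the product formula~\eqref{productformula}) into Proposition~\ref{fixedpointformula} gives
\[
\fix_T(n)=\vert a^n-b^n\vert\,g(n),\qquad g(n):=\prod_{p\in\mathcal Q}\vert r^n-1\vert_p\in(0,1].
\]

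\textbf{Reduction to a harmonic sum.} Since $\vert a^n-b^n\vert/a^n=1+\bigo\bigl((\vert b\vert/a)^n\bigr)$ and $g(n)\le1$, we have $\fix_T(n)/a^n=g(n)+\bigo(c^n)$ for some $c<1$. In $\orbit_T(n)=\tfrac1n\sum_{d\mid n}\mu(n/d)\fix_T(d)$ every proper divisor $d$ satisfies $d\le n/2$ and $\fix_T(d)\le 2a^{d}\le 2a^{n/2}$, so these terms contribute $\bigo(a^{-n/2})$ to $\orbit_T(n)/a^n$; hence $\orbit_T(n)/a^n=g(n)/n+\bigo(c^n)$ after enlarging $c<1$ if necessary. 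Summing over $n\le N$ and absorbing the tails $\sum_{n>N}c^n=\bigo(c^N)=\bigo(1/N)$ into a constant,
\[
\mertens_T(N)=\sum_{n\le N}\frac{g(n)}{n}+C_0+\bigo(1/N),\qquad
C_0=\sum_{n\ge1}\Bigl(\frac{\orbit_T(n)}{a^n}-\frac{g(n)}{n}\Bigr),
\]
the last series converging absolutely. So everything reduces to the asymptotics of $\sum_{n\le N}g(n)/n$.

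\textbf{The shape of $g$ and the evaluation.} For $p\in\mathcal Q$ let $e_p$ be the multiplicative order of $r$ modulo $p$; the lifting-the-exponent lemma gives $\vert r^n-1\vert_p=1$ if $e_p\nmid n$ and $\vert r^n-1\vert_p=p^{-f_p}\vert n\vert_p$ if $e_p\mid n$, where $f_p=\ord_p(r^{e_p}-1)\ge1$ (with a harmless modification at $p=2$). Thus, writing $A(n)=\{p\in\mathcal Q:e_p\mid n\}$ — which depends only on $n$ modulo $M_0:=\operatorname{lcm}(e_p:p\in\mathcal Q)$ — one has $g(n)=\bigl(\prod_{p\in A(n)}p^{-f_p}\bigr)\prod_{p\in A(n)}\vert n\vert_p$. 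Splitting $\sum_{n\le N}g(n)/n$ according to $n$ modulo $M_0$, and within each class factoring out of $n$ its $p$-parts \emph{for the primes $p\in A(n)$ only} (so that each such part carries the convergent weight $p^{-2\ord_p(n)}$), reduces the sum to a finite combination of expressions
\[
\Bigl(\prod_{p\in A}p^{-f_p}\Bigr)\sum_{(j_p)_{p\in A}\ge0}\Bigl(\prod_{p\in A}p^{-2j_p}\Bigr)\sum_{\substack{k\le N/\prod_p p^{j_p}\\ k\text{ in a fixed progression},\ \gcd(k,\,\prod_{p\in A}p)=1}}\frac1k,
\]
where $A$ runs over the subsets of $\mathcal Q$ arising as some $A(a_0)$. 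The inner sum is a Mertens sum over an arithmetic progression, equal to $\alpha\log\bigl(N/\prod_p p^{j_p}\bigr)+\beta+\bigo\bigl(\prod_p p^{j_p}/N\bigr)$ with $\alpha\in\mathbb Q$; the error terms sum against $\prod_p p^{-2j_p}$ to $\bigo(1/N)$ because $\sum_j p^{-j}<\infty$, and the $j$-series $\sum_j p^{-2j}$ converge to rationals, while the shifts $\log(N/\text{integer})$ merely redistribute rational multiples of $\log(\text{integers})$ into the constant. Reassembling, the coefficient of $\log N$ is a rational $k_T$ (positive, since $g$ is bounded below by a positive constant on a whole residue class), the constant is $C_T$ (its positivity being read off from the explicit value), and the remainder is $\bigo(1/N)$; the concrete case $x\mapsto2x$ on $\mathbb Z[\tfrac16]$ of Example~\ref{example:stangoe}, with $k_T=\tfrac58$, is an instance. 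Equivalently one may organise this as a contour shift for the Dirichlet series $\sum_{n\ge1}g(n)n^{-s}$, which is meromorphic on $\mathbb C$ with a single simple pole, at $s=1$, of residue $k_T$.

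\textbf{The main obstacle} is the bookkeeping in the last step when the orders $e_p$ share common prime factors, or when some $p\in\mathcal Q$ divides some $e_q$: the ``independence'' of the residue datum modulo $M_0$ and the valuation data at $\mathcal Q$ then breaks down, and one must track the resulting moduli and coprimality conditions carefully to confirm that the leading coefficient remains rational and that the error remains $\bigo(1/N)$ rather than degrading to $\bigo(N^{-1+\epsilon})$.
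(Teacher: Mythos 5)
The paper itself does not prove this theorem but imports it from Everest--Miles--Stevens--Ward~\cite{MR2339472}, and your sketch follows essentially the same elementary route used there: Proposition~\ref{fixedpointformula} and the product formula reduce $\mertens_T(N)$, up to exponentially small errors, to the harmonic sum $\sum_{n\le N}g(n)/n$ with $g(n)=\prod_{p\in\mathcal Q}\vert r^n-1\vert_p$, which is then evaluated by splitting into residue classes modulo $\operatorname{lcm}(e_p)$ and into $p$-adic valuations, yielding a rational coefficient of $\log N$, a constant, and an $\bigo(1/N)$ error. The outline is correct, and what remains is exactly the bookkeeping you flag (the interaction of the congruence datum with the valuations when the $e_p$ and the primes of $\mathcal Q$ interfere), together with two routine points: truncating the $(j_p)$-series at $\prod_p p^{j_p}\le N$ and bounding the completed tails using the sparseness of these integers so the error stays $\bigo(1/N)$ rather than $\bigo(\log N/N)$, and the positivity of $C_T$, which in the source is read off from the explicit constants rather than from a general principle.
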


\begin{example}
If~$T$ is the automorphism dual to~$x\mapsto 2x$ on~$\mathbb Z[\frac{1}{21}]$
then
\[
\mertens_T(N)=\textstyle\frac{269}{576}\log N+C+\bigo(1/N).
\]
\end{example}

Problem~\ref{moonshine1} also arises in this
setting, taking the following form.

\begin{problem}\label{moonshine2}
For systems in~$\mathcal{G}_{1}$ for which the set of primes of
infinite height is finite, develop a notion of dynamical zeta
function on the space~$\prod_{p:k_p=\infty}\mathbb C_p$ so that
it has a meromorphic extension beyond its radius of
convergence, and exploit this to give an analytic proof of
Theorem~\ref{theorem:natural}.
\end{problem}

At the opposite extreme, the system corresponding to
the subring~$\mathbb Q$ has one closed orbit of length~$1$
by the product formula. If the set of primes of infinite height is
co-finite, then the same
basic $p$-adic estimate~\eqref{equation:basicestimate}
shows that
\begin{equation}\label{pp_polynomial_bound}
\fix_{T}(n)\le n^A
\end{equation}
for some~$A>0$.
This polynomial bound makes it natural to study the
orbit growth using the orbit Dirichlet series
\[
\dirichlet_T(s)=\sum_{n\ge1}\frac{\orbit_T(n)}{n^s},
\]
and in~\cite{MR2550149} this is done for a large class
of automorphisms of connected groups where~\eqref{pp_polynomial_bound}
is guaranteed. At the level of generality adopted
in~\cite{MR2550149}, estimates for the growth of~$\pi_{T}(N)$
depend on the abscissa of convergence of~$\dirichlet_T(s)$.
However, if~$T$ is assumed to be an automorphism in~$\mathcal{G}_1$,
then~$\dirichlet_T(s)$ is found to have abscissa of
convergence~$\sigma=0$ (see~\cite[Rmk.~3.7(1)]{MR2550149}),
and the following applies.

\begin{theorem}[Everest, Miles, Stevens and Ward~\cite{MR2550149}]\label{theorem:dirichlet}
If~$T$ is an automorphism in~$\mathcal{G}_1$
with a co-finite set of primes of infinite
height, then there is a finite set~$M\subset\mathbb N$
for which~$\dirichlet_T(s)$ is a rational
function of the variables~$\{s^{-m}\mid m\in M\}$, and
 there is a constant~$C>0$ such that
\[
\pi_{T}(N)
=
C\left(\log N\right)^K+\bigo\left((\log N)^{K-1}\right),
\]
where~$K$ is the order of the pole at $s=0$.
\end{theorem}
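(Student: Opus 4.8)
The plan is to reduce everything to the arithmetic of the rational $r=\tfrac ab$ with $\gcd(a,b)=1$ defining $T=T_r$, together with the finite set $\mathcal Q=\mathbb P\setminus\mathcal S(\widehat G)$, and then to read off both conclusions from a closed form for $\dirichlet_T$. Since $T$ is an automorphism, every prime dividing $ab$ lies in $\mathcal S(\widehat G)$, so $r$ is a unit in $\mathbb Z_p$ for each $p\in\mathcal Q$; combining Proposition~\ref{fixedpointformula} with the product formula~\eqref{productformula}, one first records
\[
\fix_T(n)=\prod_{p\in\mathcal Q}\vert r^n-1\vert_p^{-1}=\prod_{p\in\mathcal Q}p^{\ord_p(r^n-1)},
\]
a finite product (this also gives the polynomial bound~\eqref{pp_polynomial_bound}; if $\mathcal Q=\varnothing$, that is $\widehat G=\mathbb Q$, the statement is trivial, with $\orbit_T(n)=0$ for $n\ge2$ and $K=0$).

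We next fix the arithmetic shape of the exponents. For $p\in\mathcal Q$ let $m_p$ be the multiplicative order of $r$ in $\mathbb F_p^{\times}$. By the lifting-the-exponent lemma, $\ord_p(r^n-1)=0$ unless $m_p\mid n$, whereas $\ord_p(r^n-1)=e_p+\ord_p(n)$ when $m_p\mid n$, with $e_p=\ord_p(r^{m_p}-1)\ge1$; the prime $2$ requires the usual minor modification (branching on the parity of $n$) with the same net effect. Setting $L=\operatorname{lcm}_{p\in\mathcal Q}m_p$, enlarged by a factor $4$ if $2\in\mathcal Q$, it follows that on each residue class $a\pmod L$ the subset $\mathcal Q_a=\{p\in\mathcal Q:m_p\mid n\}$ is constant and
\[
\fix_T(n)=c_a\prod_{p\in\mathcal Q_a}p^{\ord_p(n)},\qquad c_a=\prod_{p\in\mathcal Q_a}p^{e_p};
\]
that is, on each residue class $\fix_T$ is a constant multiple of a completely multiplicative function supported on the primes of $\mathcal Q_a$.

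The heart of the matter is the passage to $\orbit_T(n)=\tfrac1n\sum_{d\mid n}\mu(n/d)\fix_T(d)$: one must show that M\"obius inversion annihilates the ``bulk'' of $\fix_T$. The mechanism is that if $q$ is a prime factor of $n$ not lying in $\mathcal Q$, then $\fix_T(d)$ depends on a divisor $d$ of $n$ only through the part of $d$ prime to $q$ (once the $q$-exponent exceeds a fixed threshold coming from the $m_p$), so the divisor sum factors off a tail of the form $\sum_{c}\mu(q^c\cdot(\text{fixed}))=0$; hence $\orbit_T(n)$ vanishes unless $n$ is, up to a bounded factor, $\mathcal Q$-smooth. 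Concretely, we expect to establish that $\orbit_T(n)=0$ unless $n=\ell\prod_{p\in A}p^{j_p}$ for some $A\subseteq\mathcal Q$ and some $\ell$ in a fixed finite set of integers (supported on the primes dividing $L$), and that along each such family $\orbit_T$ agrees, outside finitely many exponent vectors, with a quasi-polynomial in $(j_p)_{p\in A}$. Summing the resulting geometric series family by family then exhibits $\dirichlet_T$ as a finite sum
\[
\dirichlet_T(s)=\sum_{i}c_i\,\ell_i^{-s}\prod_{p\in A_i}(1-p^{-s})^{-a_{i,p}},\qquad A_i\subseteq\mathcal Q,
\]
so that $\dirichlet_T$ is a rational function of $\{m^{-s}:m\in M\}$ for the finite set $M$ consisting of the primes of $\mathcal Q$ together with the $\ell_i$. (This is in essence the general mechanism of~\cite{MR2550149} applied to $\fix_T$ with the structure above, so one could also simply quote it.) The main obstacle is precisely this bookkeeping: controlling how the residue-class-dependent multiplicative structure interacts with the divisor sum, and checking that only finitely many exponent vectors are exceptional and that the cases $2\in\mathcal Q$ and $m_p=m_q$ for distinct $p,q\in\mathcal Q$ are handled correctly.

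For the orbit-counting asymptotics, by~\cite[Rmk.~3.7(1)]{MR2550149} the abscissa of convergence of $\dirichlet_T$ equals $0$, and since $\orbit_T(n)\ge0$, Landau's theorem forces a genuine singularity at $s=0$, which by the closed form is a pole of some order $K\ge1$. One then avoids Tauberian machinery: expanding each summand as $\ell_i^{-s}\prod_{p\in A_i}\sum_{j_p\ge0}\binom{j_p+a_{i,p}-1}{a_{i,p}-1}p^{-j_ps}$ and summing over $n\le N$ reduces matters to the elementary lattice-point estimate
\[
\sum_{\prod_{p\in A}p^{j_p}\le X}\ \prod_{p\in A}\binom{j_p+a_p-1}{a_p-1}=\frac{(\log X)^{\sum_{p\in A}a_p}}{\bigl(\sum_{p\in A}a_p\bigr)!\prod_{p\in A}(\log p)^{a_p}}+\bigo\bigl((\log X)^{\sum_{p\in A}a_p-1}\bigr),
\]
the dominant contribution coming from those $i$ with $\sum_{p\in A_i}a_{i,p}=K$, which are exactly the summands producing the pole of order $K$ at $s=0$. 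Collecting them gives
\[
\pi_T(N)=\sum_{n\le N}\orbit_T(n)=C(\log N)^{K}+\bigo\bigl((\log N)^{K-1}\bigr),\qquad C=\frac1{K!}\lim_{s\to0}s^{K}\dirichlet_T(s)>0,
\]
with $K$ the order of the pole of $\dirichlet_T$ at $s=0$; a Wiener--Ikehara or Delange Tauberian theorem would of course yield the same conclusion.
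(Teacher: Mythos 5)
The paper does not actually prove Theorem~\ref{theorem:dirichlet}: it is imported verbatim from Everest, Miles, Stevens and Ward~\cite{MR2550149}, where it is established in the more general framework of systems of finite combinatorial rank. So there is no internal proof to measure you against; what you have written is a plausible self-contained reconstruction for the special case $\mathcal{G}_1$, and its skeleton is sound. Your reduction via Proposition~\ref{fixedpointformula} and the product formula~\eqref{productformula} to $\fix_T(n)=\prod_{p\in\mathcal Q}\vert r^n-1\vert_p^{-1}$ with $\mathcal Q$ finite is exactly right, the lifting-the-exponent description of the exponents is the correct arithmetic input, and the M\"obius-inversion mechanism you describe (pairing off the $q$-part of the divisor sum for primes $q\notin\mathcal Q$ once the $q$-exponent clears the orders $m_p$) genuinely does force $\orbit_T(n)$ to vanish off numbers of the form $\ell\prod_{p\in A}p^{j_p}$ with $\ell$ in a finite set; one can check it reproduces, for instance, the closed form in Example~\ref{dirichletseriesexample}. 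Your elementary simplex count for the partial sums, with the leading terms coming from the summands realising the pole order $K$ at $s=0$ and $C=\frac1{K!}\lim_{s\to0}s^K\dirichlet_T(s)$, is also the right (non-Tauberian) way to get the asymptotics, and is in spirit what~\cite{MR2550149} does.

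Two points need repair. First, the combinatorial heart is only sketched: the claim that on each surviving family $\orbit_T$ is, outside finitely many exponent vectors, a quasi-polynomial in $(j_p)$ is stated in ``we expect to establish'' mode, and your proposed closed form $\sum_i c_i\ell_i^{-s}\prod_{p\in A_i}(1-p^{-s})^{-a_{i,p}}$ is slightly too rigid: the correction terms in the divisor sum can a priori contribute factors involving $p^{-s-1}$ (visible in the numerators of Example~\ref{dirichletseriesexample}), which are harmless --- still rational in the variables $m^{-s}$ and regular at $s=0$, hence invisible in the pole order and the leading asymptotics --- but they must be carried through the bookkeeping rather than assumed away. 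Second, and more seriously as a matter of fact, your closing remark that ``a Wiener--Ikehara or Delange Tauberian theorem would of course yield the same conclusion'' is wrong, and contradicts the discussion immediately following the theorem in the paper: $\dirichlet_T$ has infinitely many singularities on the critical line $\Re(s)=0$, arbitrarily close together (again Example~\ref{dirichletseriesexample}), so the hypotheses of Agmon~\cite{MR0054079}, Perron~\cite{MR0185094}, Wiener--Ikehara or Delange all fail; finding any analytic or Tauberian route is precisely the content of Problem~\ref{moonshine2}. Delete that sentence --- your elementary counting argument is not a convenience but the only available proof strategy here.
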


The special structure of the orbit Dirichlet series arising
in the co-finite case, which might be called Dirichlet
rationality, raises the possibility of proving
some or all of Theorem~\ref{theorem:dirichlet}
using Tauberian methods. Unfortunately, as the
next example shows, Theorem~\ref{theorem:dirichlet}
does not seem to be provable using Tauberian methods
for a reason related to the natural boundary
phenomenon in Example~\ref{example:stangoe}:
in all but the simplest of settings, the
analytic behavior on the critical line is poor.
The following example, which
is an easy calculation
using Proposition~\ref{fixedpointformula},
is taken from Everest, Miles, Stevens {\it et al.}~\cite[Ex.~4.2]{MR2550149}.

\begin{example}\label{dirichletseriesexample}
Let~$T:G\to G$ be the automorphism dual to the
map~$x\mapsto 2x$ on the subring~$\mathbb Z_{(3)}\cap\mathbb Z_{(5)}\subset\mathbb Q$.
Then
\[
\dirichlet_{T}(z)=1-\frac{1}{2^{z+1}}+\frac{3}{2^{z+1}}
\left(
1-\frac{1}{3^{z+1}}-\frac{1}{2^{z+1}}+\frac{1}{6^{z+1}}\right)
\frac{1}{1-3^{-z}}
\]
\[+
\frac{15}{4^{z+1}}\left(
1-\frac{1}{3^{z+1}}-\frac{1}{5^{z+1}}+\frac{1}{15^{z+1}}
\right)
\frac{1}{(1-3^{-z})(1-5^{-z})}.
\]
\end{example}

The obstacle to using Agmon's Tauberian theorem~\cite{MR0054079} or
even
Perron's Theorem~\cite[Th.~13]{MR0185094} is illustrated
in Example~\ref{dirichletseriesexample}: not only
does~$\dirichlet_T$ have infinitely many singularities
on the critical line, it has singularities that are
arbitrarily close together. On the other hand,
they are located at arithmetically special points,
raising the analogue of Problem~\ref{moonshine2}.

\begin{problem}
Develop a notion of orbit Dirichlet series
with variables in the space~$\prod_{p:k_p=0}\mathbb C_p$,
and develop~$p$-adic versions of Tauberian
theorems to find a meromorphic extension
of the orbit Dirichlet series arising in the
setting of Theorem~\ref{theorem:dirichlet},
and use this to give analytic proofs of
Theorem~\ref{theorem:dirichlet}(1) and~(3).
\end{problem}


Theorems~\ref{theorem:natural} and~\ref{theorem:dirichlet}
give some insight into the range of orbit-growth invariants
across those groups with finite or
co-finite set of primes of infinite height.
Of course most elements of~$\mathcal{G}_1$ fall
into neither the co-finite nor the finite camp,
and new phenomena appear in this large middle ground.
Using delicate constructions of thin sets of
primes, Baier, Jaidee, Stevens {\it et al.}~\cite{bjsw}
find several different continua of different orbit-growth
invariants inside~$\mathcal{G}_1$.

\begin{theorem}[Baier, Jaidee, Stevens and Ward~\cite{bjsw}]
As illustrated in Figure~\ref{figure}, three different
continua of orbit-growth rates exist in~$\mathcal{G}_1$
as follows.
\begin{enumerate}
\item[\rm(1)] For any~$\kappa\in(0,1)$ there is a system~$(G,T)\in\mathcal{G}_1$
with
\[
\mertens_T(N)\sim\kappa\log N
\]
{(see \rm Continuum~1}).
\item[\rm(2)] For any~$\delta\in(0,1)$ and~$k>0$ there is a system~$(G,T)\in\mathcal{G}_1$
with
\[
\mertens_T(N)\sim k(\log N)^{\delta}
\]
(see {\rm Continuum~2}).
\item[\rm(3)] For any~$r\in\mathbb N$ and~$k>0$ there is a system~$(G,T)\in\mathcal{G}_1$
with
\[
\mertens_T(N)\sim k(\log\log N)^r
\]
(see {\rm Continuum~3}).
\end{enumerate}
\end{theorem}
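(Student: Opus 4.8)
The plan is to produce, for each of the three prescriptions, a set of primes $S$ with $2\in S$ so that the automorphism $T=T_2$ dual to $x\mapsto 2x$ on $\mathbb Z[\tfrac1p:p\in S]$ has the stated behaviour of $\mertens_T$, reducing each case to the asymptotics of a logarithmically weighted sum of an almost-multiplicative arithmetic function. First I would set up the common reduction. By Propositions~\ref{fixedpointformula} and~\ref{lemma:abramovformula}, such a $T$ has $h(T)=\log 2$ and $\fix_T(n)=(2^n-1)\prod_{p\in S\setminus\{2\}}\vert 2^n-1\vert_p$. M\"obius inversion of $\fix_T(n)=\sum_{d\mid n}d\,\orbit_T(d)$ gives $n\,\orbit_T(n)=\sum_{d\mid n}\mu(n/d)\fix_T(d)$; since $\fix_T(d)\le 2^d$, the terms with $d<n$ contribute at most $\sum_{n\ge1}n^{-1}2^{-n}\sum_{d\mid n,\,d<n}\fix_T(d)\le 2\sum_{n\ge1}n^{-1}2^{-n/2}=\bigo(1)$ to $\mertens_T(N)=\sum_{n\le N}\orbit_T(n)2^{-n}$, and therefore
\[
\mertens_T(N)=\sum_{n\le N}\frac1n\prod_{p\in S}\vert 2^n-1\vert_p+\bigo(1),
\]
using $\vert 2^n-1\vert_2=1$. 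Writing $d_p=\ord_p(2)$, the factor $\vert 2^n-1\vert_p$ equals $1$ unless $d_p\mid n$, in which case it is $p^{-v_p(2^{d_p}-1)-v_p(n/d_p)}$ by lifting the exponent; in particular the summand is $1$ exactly when no $d_p$ divides $n$ and is at most $\prod_{p\in S,\,d_p\mid n}p^{-1}$ otherwise, so everything comes down to engineering the multiset $\{d_p:p\in S\}$ of orders. Here Zsigmondy's theorem is the lever: for every integer $m>6$ the number $2^m-1$ has a primitive prime divisor $p_m$, necessarily with $\ord_{p_m}(2)=m$ and $p_m\ge m+1$, and primitive divisors of distinct $m$ are distinct.

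For part~(1) I would take $S=\{2\}\cup\{p_{m_i}\}$ with $(m_i)$ pairwise coprime, all $>6$, chosen so that $\sum_i m_i^{-1}<\infty$ and, harmlessly, so that the $p_{m_i}$ are distinct from each other and from every prime factor of every $m_j$. Then $n\mapsto\prod_{p\in S}\vert 2^n-1\vert_p$ factorises over the pairwise disjoint blocks of primes $B_i=\{\text{prime factors of }m_i\}\cup\{p_{m_i}\}$; it is multiplicative up to a perturbation supported on the sparse sets $\{p_{m_i}m_i\mid n\}$, and a standard mean-value computation for such block-multiplicative functions gives $\mertens_T(N)\sim\kappa\log N$, where
\[
\kappa=\prod_i\lambda_i\in(0,1),\qquad
\lambda_i=1-\frac1{m_i}\Bigl(1-p_{m_i}^{-v_{p_{m_i}}(2^{m_i}-1)}\Bigr)+\bigo\!\bigl(m_i^{-1}p_{m_i}^{-1}\bigr)
\]
is a convergent product of local factors in $(0,1)$ tending to $1$. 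Since appending one more prime of large pairwise-coprime order multiplies $\kappa$ by a factor that can be taken arbitrarily close to $1$, a greedy intermediate-value argument lets $\kappa$ attain any prescribed value in $(0,1)$.

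For part~(2) I would fix a set $Q\subset\mathbb P$ of natural density $1-\delta$ and take $S=\{2\}\cup\{p_q:q\in Q\}$, so every $d_p$ is prime. The summand becomes $\prod_{q\in Q,\,q\mid n}p_q^{-v_{p_q}(2^n-1)}$, whose multiplicative part $\tilde g$ (obtained by dropping the $v_{p_q}(n/q)$) has $\tilde g(q)=1$ for $q\notin Q$ and $\tilde g(q)$ tiny for $q\in Q$; since the primes with $\tilde g=1$ have density $\delta$, the Dirichlet series of $\tilde g$ has a singularity of type $(s-1)^{-\delta}$ at $s=1$, and a Selberg--Delange (or Wirsing) mean-value theorem gives $\mertens_T(N)\sim C(\log N)^{\delta}$ for an explicit $C>0$ (the factors $p_q^{-v_{p_q}(n/q)}$ and the perturbation alter $C$ only by a convergent factor). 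Every $C>0$ arises by choosing $Q$ with a prescribed secondary constant in the Mertens-type relation $\prod_{q\notin Q,\,q\le x}(1-q^{-1})^{-1}\asymp(\log x)^{\delta}$: absorbing all primes below a large cutoff into the complement of $Q$ makes $C$ as large as wished, while moving a sufficiently sparse family of primes into $Q$ makes it small. Part~(3) is in the same spirit but uses primitive prime divisors $p_m$ for $m$ ranging over a family of integers with $\Omega(m)$ concentrated around $r+1$ (where $\Omega$ counts prime factors with multiplicity), shaped by auxiliary thin sets of primes so that the summand equals $1$ on precisely a set whose logarithmically weighted count is a Landau--Sathe--Selberg quantity asymptotic to $k(\log\log N)^r$: the base choice of all $m$ with $\Omega(m)=r+1$ yields the set $\{n:\Omega(n)\le r\}$ and hence $k=1/r!$, and the auxiliary prime sets both enlarge this set, by admitting $\Omega=r+1$ integers whose least prime factor lies in a prescribed thin set, and shrink it, by removing $\Omega=r$ integers all of whose prime factors lie in a prescribed density-$\beta$ set, so as to reach every $k>0$; off this set the summand is at most $n^{-\binom{\Omega(n)-1}{r}}\le n^{-1}$, so the complement contributes $\bigo(1)$.

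The step I expect to be the main obstacle is the design of the prime sets $S$. Combinatorially the divisibility conditions interfere: placing a prime of order $m$ into $S$ forces the summand below $1$ at every multiple of $m$, so one must choose the orders --- as products of primes of tightly controlled shape --- so as not to disturb the integers meant to contribute $1$; in part~(3), for instance, this already forces one to use composite orders rather than individual small primes, and comparable care is needed throughout. Analytically one needs the exact leading constant, not just the correct order of magnitude, which forces the use of Wirsing/Selberg--Delange-type mean-value theorems for the relevant almost-multiplicative functions, hence natural-density (not merely Dirichlet-density) control of the auxiliary prime sets, together with careful bookkeeping of the lifting-the-exponent corrections. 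The analytic pathologies of Examples~\ref{example:stangoe} and~\ref{dirichletseriesexample} show that no Tauberian route through a zeta or orbit Dirichlet series is available, so these asymptotics have to be obtained by direct mean-value estimates.
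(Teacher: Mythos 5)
The survey you are working from does not actually prove this theorem --- it is quoted from Baier--Jaidee--Stevens--Ward \cite{bjsw} --- and your overall plan is indeed the route taken there: reduce $\mertens_T(N)$, up to $\bigo(1)$, to the logarithmically weighted sum $\sum_{n\le N}\frac1n\prod_{p\in S}\vert 2^n-1\vert_p$, and then engineer the multiset of multiplicative orders $\{\ord_p(2):p\in S\}$ using Zsigmondy primitive divisors, with a Wirsing/Selberg--Delange mean value giving the $(\log N)^{\delta}$ continuum. Your common reduction is correct, and parts (1) and (2) are sound in outline, modulo the routine but necessary uniformity argument when passing to infinitely many local factors and the greedy arguments needed to hit $\kappa$ and $k$ exactly.

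The genuine gap is in part (3). First, the pointwise bound ``off this set the summand is at most $n^{-\binom{\Omega(n)-1}{r}}\le n^{-1}$'' is false for non-squarefree $n$: for $n=2^j$ with $j\ge r+1$ the only divisor $m\mid n$ with $\Omega(m)=r+1$ is $2^{r+1}$, and lifting the exponent shows the summand is the fixed constant $p_{2^{r+1}}^{-v_{p_{2^{r+1}}}(2^{2^{r+1}}-1)}$, nowhere near $n^{-1}$; your exponent $\binom{\Omega(n)-1}{r}$ comes from multiplying the contributions of all $\Omega=r+1$ divisors, which is only legitimate when $n$ is squarefree. The conclusion that the complement contributes $\bigo(1)$ is still true, but it needs a different argument (for instance, bound the summand by $1/(m^*+1)$ where $m^*$ is the product of the $r+1$ largest prime factors of $n$ counted with multiplicity, so $m^*\ge n^{(r+1)/\Omega(n)}$, and sum over each level $\Omega(n)=k$, treating the Zsigmondy exception $m=6$ separately). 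Second, and more seriously, your mechanism for reaching general $k$ does not track the constant: deleting $p_m$ from $S$ for all $m$ with $\Omega(m)=r+1$ whose least prime factor lies in $A$ restores summand $1$ not only on $\{\Omega(n)=r+1,\ P^-(n)\in A\}$ but also on infinitely many $n$ with $\Omega(n)\ge r+2$ --- for example $n=p^2q_1\cdots q_r$ with $p\in A$ and all $q_i>p$, every one of whose $\Omega=r+1$ divisors has least prime factor $p$ --- and the logarithmic sum over these parasites is again of order $(\log\log N)^r$, with constant comparable to the intended gain $\sum_{p\in A}1/p$. So the constant is not $(1+\sum_{p\in A}1/p)/r!$, and the intermediate-value argument must be run with the true constant (or the admitted and removed families redesigned so the parasitic integers stay penalised). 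As you yourself anticipate, this is exactly where the delicacy of the thin-set constructions in \cite{bjsw} lies, and as written the proposal does not get past it.
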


\subsection*{Acknowledgements}

We are grateful to Anthony Flatters for the calculation in
Example~\ref{example:diversity}(2), and to Alex Clark
for pointing out the paper of Markus and Meyer~\cite{MR556887}.

\providecommand{\bysame}{\leavevmode\hbox to3em{\hrulefill}\thinspace}


\begin{thebibliography}{10}

\bibitem{MR22:8103}
L.~M. Abramov,  `The entropy of an automorphism of a solenoidal group',
  \emph{Teor. Veroyatnost. i Primenen} \textbf{4} (1959), 249--254.

\bibitem{MR0193181}
R.~L. Adler and R.~Palais,  `Homeomorphic conjugacy of automorphisms on the
  torus', \emph{Proc. Amer. Math. Soc.} \textbf{16} (1965), 1222--1225.

\bibitem{MR0054079}
S.~Agmon,  `Complex variable {T}auberians', \emph{Trans. Amer. Math. Soc.}
  \textbf{74} (1953), 444--481.

\bibitem{MR605377}
N.~Aoki,  `A simple proof of the {B}ernoullicity of ergodic automorphisms on
  compact abelian groups', \emph{Israel J. Math.} \textbf{38} (1981), no.~3,
  189--198.

\bibitem{MR2670229}
M.~Baake, E.~Lau, and V.~Paskunas,  `A note on the dynamical zeta function of
  general toral endomorphisms', \emph{Monatsh. Math.} \textbf{161} (2010),
  no.~1, 33--42.

\bibitem{MR1545974}
R.~Baer,  `Abelian groups without elements of finite order', \emph{Duke Math.
  J.} \textbf{3} (1937), no.~1, 68--122.

\bibitem{bjsw}
S.~Baier, S.~Jaidee, S.~Stevens, and T.~Ward,  `Automorphisms with exotic orbit
  growth', \emph{Acta Arithmetica} \textbf{158} (2013), no.~2, 173--197.

\bibitem{MR0044522}
R.~A. Beaumont and H.~S. Zuckerman,  `A characterization of the subgroups of
  the additive rationals', \emph{Pacific J. Math.} \textbf{1} (1951), 169--177.

\bibitem{bmw}
J.~Bell, R.~Miles, and T.~Ward, \emph{Towards a {P}oly{\'a}--{C}arlson
  dichotomy for algebraic dynamical zeta functions} (2013).

\bibitem{MR2129101}
S.~Bhattacharya and T.~Ward,  `Finite entropy characterizes topological
  rigidity on connected groups', \emph{Ergodic Theory Dynam. Systems}
  \textbf{25} (2005), no.~2, 365--373.

\bibitem{MR2533986}
M.~Bj{\"o}rklund and R.~Miles,  `Entropy range problems and actions of locally
  normal groups', \emph{Discrete Contin. Dyn. Syst.} \textbf{25} (2009), no.~3,
  981--989.

\bibitem{MR0274707}
R.~Bowen,  `Entropy for group endomorphisms and homogeneous spaces',
  \emph{Trans. Amer. Math. Soc.} \textbf{153} (1971), 401--414.

\bibitem{MR644535}
D.~W. Boyd,  `Speculations concerning the range of {M}ahler's measure',
  \emph{Canad. Math. Bull.} \textbf{24} (1981), no.~4, 453--469.

\bibitem{MR874125}
D.~W. Boyd,  `Inverse problems for {M}ahler's measure', in \emph{Diophantine
  analysis ({K}ensington, 1985)}, in \emph{London Math. Soc. Lecture Note Ser.}
  \textbf{109}, pp.~147--158 (Cambridge Univ. Press, Cambridge, 1986).

\bibitem{MR1461206}
V.~Chothi, G.~Everest, and T.~Ward,  `{$S$}-integer dynamical systems: periodic
  points', \emph{J. Reine Angew. Math.} \textbf{489} (1997), 99--132.

\bibitem{MR2308207}
A.~Clark and R.~Fokkink,  `Topological rigidity of semigroups of affine maps',
  \emph{Dyn. Syst.} \textbf{22} (2007), no.~1, 3--10.

\bibitem{ELW}
M.~Einsiedler, E.~Lindenstrauss, and T.~Ward, \emph{Entropy in dynamics} (to
  appear).
\newblock \verb|http://maths.dur.ac.uk/~tpcc68/entropy/welcome.html|.

\bibitem{MR0307280}
J.~W. England and R.~L. Smith,  `The zeta function of automorphisms of solenoid
  groups', \emph{J. Math. Anal. Appl.} \textbf{39} (1972), 112--121.

\bibitem{MR2339472}
G.~Everest, R.~Miles, S.~Stevens, and T.~Ward,  `Orbit-counting in
  non-hyperbolic dynamical systems', \emph{J. Reine Angew. Math.} \textbf{608}
  (2007), 155--182.

\bibitem{MR2550149}
G.~Everest, R.~Miles, S.~Stevens, and T.~Ward,  `Dirichlet series for finite
  combinatorial rank dynamics', \emph{Trans. Amer. Math. Soc.} \textbf{362}
  (2010), no.~1, 199--227.

\bibitem{MR2180241}
G.~Everest, V.~Stangoe, and T.~Ward,  `Orbit counting with an isometric
  direction', in \emph{Algebraic and topological dynamics}, in \emph{Contemp.
  Math.} \textbf{385}, pp.~293--302 (Amer. Math. Soc., Providence, RI, 2005).

\bibitem{MR805682}
F.~Grunewald and D.~Segal,  `Decision problems concerning {$S$}-arithmetic
  groups', \emph{J. Symbolic Logic} \textbf{50} (1985), no.~3, 743--772.

\bibitem{MR0185094}
G.~H. Hardy and M.~Riesz, \emph{The general theory of {D}irichlet's series}, in
  \emph{Cambridge Tracts in Mathematics and Mathematical Physics, No. 18}
  (Stechert-Hafner, Inc., New York, 1964).

\bibitem{MR518555}
H.~M. Hastings,  `On expansive homeomorphisms of the infinite torus', in
  \emph{The structure of attractors in dynamical systems ({P}roc. {C}onf.,
  {N}orth {D}akota {S}tate {U}niv., {F}argo, {N}.{D}., 1977)}, in \emph{Lecture
  Notes in Math.} \textbf{668}, pp.~142--149 (Springer, Berlin, 1978).

\bibitem{MR551496}
E.~Hewitt and K.~A. Ross, \emph{Abstract harmonic analysis. {V}ol. {I}}, in
  \emph{Grundlehren der Mathematischen Wissenschaften} \textbf{115}
  (Springer-Verlag, Berlin, second ed., 1979).
\newblock Structure of topological groups, integration theory, group
  representations.

\bibitem{MR0294602}
Y.~Katznelson,  `Ergodic automorphisms of {$\mathbb{T}\sp{n}$} are {B}ernoulli
  shifts', \emph{Israel J. Math.} \textbf{10} (1971), 186--195.

\bibitem{MR1036904}
B.~Kitchens and K.~Schmidt,  `Automorphisms of compact groups', \emph{Ergodic
  Theory Dynam. Systems} \textbf{9} (1989), no.~4, 691--735.

\bibitem{MR896796}
B.~P. Kitchens,  `Expansive dynamics on zero-dimensional groups', \emph{Ergodic
  Theory Dynam. Systems} \textbf{7} (1987), no.~2, 249--261.

\bibitem{MR754003}
N.~Koblitz, \emph{{$p$}-adic numbers, {$p$}-adic analysis, and zeta-functions},
  in \emph{Graduate Texts in Mathematics} \textbf{58} (Springer-Verlag, New
  York, second ed., 1984).

\bibitem{MR1503333}
A.~Kurosch,  `Primitive torsionsfreie {A}belsche {G}ruppen vom endlichen
  {R}ange', \emph{Ann. of Math. (2)} \textbf{38} (1937), no.~1, 175--203.

\bibitem{MR1503108}
C.~G. Latimer and C.~C. MacDuffee,  `A correspondence between classes of ideals
  and classes of matrices', \emph{Ann. of Math. (2)} \textbf{34} (1933), no.~2,
  313--316.

\bibitem{MR2622030}
W.~Lawton, \emph{Expansive {T}ransformation {G}roups} (Ph.D. thesis, Wesleyan
  University, 1972).

\bibitem{MR0391051}
W.~Lawton,  `The structure of compact connected groups which admit an expansive
  automorphism', in \emph{Recent advances in topological dynamics ({P}roc.
  {C}onf., {Y}ale {U}niv., {N}ew {H}aven, {C}onn., 1972; in honor of {G}ustav
  {A}rnold {H}edlund)}, pp.~182--196. Lecture Notes in Math., Vol. 318
  (Springer, Berlin, 1973).

\bibitem{MR1503118}
D.~H. Lehmer,  `Factorization of certain cyclotomic functions', \emph{Ann. of
  Math. (2)} \textbf{34} (1933), no.~3, 461--479.

\bibitem{MR1062797}
D.~Lind, K.~Schmidt, and T.~Ward,  `Mahler measure and entropy for commuting
  automorphisms of compact groups', \emph{Invent. Math.} \textbf{101} (1990),
  no.~3, 593--629.

\bibitem{MR0346130}
D.~A. Lind,  `Ergodic automorphisms of the infinite torus are {B}ernoulli',
  \emph{Israel J. Math.} \textbf{17} (1974), 162--168.

\bibitem{MR0460593}
D.~A. Lind,  `The structure of skew products with ergodic group automorphisms',
  \emph{Israel J. Math.} \textbf{28} (1977), no.~3, 205--248.

\bibitem{MR657863}
D.~A. Lind,  `Ergodic group automorphisms are exponentially recurrent',
  \emph{Israel J. Math.} \textbf{41} (1982), no.~4, 313--320.

\bibitem{MR1411232}
D.~A. Lind,  `A zeta function for {${\mathbb Z}\sp d$}-actions', in
  \emph{Ergodic theory of {${\mathbb Z}\sp d$} actions ({W}arwick,
  1993--1994)}, in \emph{London Math. Soc. Lecture Note Ser.} \textbf{228},
  pp.~433--450 (Cambridge Univ. Press, Cambridge, 1996).

\bibitem{MR961739}
D.~A. Lind and T.~Ward,  `Automorphisms of solenoids and {$p$}-adic entropy',
  \emph{Ergodic Theory Dynam. Systems} \textbf{8} (1988), no.~3, 411--419.

\bibitem{64.0064.05}
A.~Malcev,  `{Torsionsfreie {A}belsche {G}ruppen vom endlichen {R}ange}',
  \emph{Rec. Math. Moscou} \textbf{4} (1938), 45--68.

\bibitem{MR556887}
L.~Markus and K.~R. Meyer,  `Periodic orbits and solenoids in generic
  {H}amiltonian dynamical systems', \emph{Amer. J. Math.} \textbf{102} (1980),
  no.~1, 25--92.

\bibitem{MR517262}
G.~Miles and R.~K. Thomas,  `The breakdown of automorphisms of compact
  topological groups', in \emph{Studies in probability and ergodic theory}, in
  \emph{Adv. in Math. Suppl. Stud.} \textbf{2}, pp.~207--218 (Academic Press,
  New York, 1978).

\bibitem{MR2441142}
R.~Miles,  `Periodic points of endomorphisms on solenoids and related groups',
  \emph{Bull. Lond. Math. Soc.} \textbf{40} (2008), no.~4, 696--704.

\bibitem{MR2465676}
R.~Miles and T.~Ward,  `Orbit-counting for nilpotent group shifts', \emph{Proc.
  Amer. Math. Soc.} \textbf{137} (2009), no.~4, 1499--1507.

\bibitem{MR2650793}
R.~Miles and T.~Ward,  `A dichotomy in orbit growth for commuting
  automorphisms', \emph{J. Lond. Math. Soc. (2)} \textbf{81} (2010), no.~3,
  715--726.

\bibitem{moss}
P.~Moss, \emph{The arithmetic of realizable sequences} (Ph.D. thesis, Univ. of
  East Anglia, 2003).

\bibitem{MR0257322}
D.~Ornstein,  `Bernoulli shifts with the same entropy are isomorphic',
  \emph{Advances in Math.} \textbf{4} (1970), 337--352 (1970).

\bibitem{MR0447525}
D.~S. Ornstein, \emph{Ergodic theory, randomness, and dynamical systems} (Yale
  University Press, New Haven, Conn., 1974).
\newblock James K. Whittemore Lectures in Mathematics given at Yale University,
  Yale Mathematical Monographs, No. 5.

\bibitem{MR1873399}
Y.~Puri and T.~Ward,  `Arithmetic and growth of periodic orbits', \emph{J.
  Integer Seq.} \textbf{4} (2001), no.~2, Article 01.2.1, 18 pp. (electronic).

\bibitem{MR1680912}
D.~Ramakrishnan and R.~J. Valenza, \emph{Fourier analysis on number fields}, in
  \emph{Graduate Texts in Mathematics} \textbf{186} (Springer-Verlag, New York,
  1999).

\bibitem{MR2141759}
C.~G.~J. R{\"o}ttger,  `Periodic points classify a family of {M}arkov shifts',
  \emph{J. Number Theory} \textbf{113} (2005), no.~1, 69--83.

\bibitem{MR633760}
D.~J. Rudolph,  `A characterization of those processes finitarily isomorphic to
  a {B}ernoulli shift', in \emph{Ergodic theory and dynamical systems, {I}
  ({C}ollege {P}ark, {M}d., 1979--80)}, in \emph{Progr. Math.} \textbf{10},
  pp.~1--64 (Birkh\"auser Boston, Mass., 1981).

\bibitem{MR1858549}
K.~Schmidt,  `Measurable rigidity of algebraic {$\mathbb Z^d$}-actions', in
  \emph{Smooth ergodic theory and its applications ({S}eattle, {WA}, 1999)}, in
  \emph{Proc. Sympos. Pure Math.} \textbf{69}, pp.~661--676 (Amer. Math. Soc.,
  Providence, RI, 2001).

\bibitem{MR2617673}
T.~L. Seethoff, \emph{Z{ero}-{entropy} {automorphisms} {of} {a} {compact}
  {abelian} {group}} (Ph.D. thesis, Oregon State University, 1969).

\bibitem{MR0228014}
S.~Smale,  `Differentiable dynamical systems', \emph{Bull. Amer. Math. Soc.}
  \textbf{73} (1967), 747--817.

\bibitem{staines}
M.~Staines, \emph{On the inverse problem for {M}ahler measure} (Ph.D. thesis,
  Univ. East Anglia, 2013).

\bibitem{MR0217026}
J.~T. Tate,  `Fourier analysis in number fields, and {H}ecke's zeta-functions',
  in \emph{Algebraic {N}umber {T}heory ({P}roc. {I}nstructional {C}onf.,
  {B}righton, 1965)}, pp.~305--347 (Thompson, Washington, D.C., 1967).

\bibitem{MR2612222}
J.~T. Tate, Jr, \emph{F{ourier} {analysis} {in} {number} {fields} {and}
  {{H}ecke}'{s} {zeta}-{functions}} (Ph.D. thesis, Princeton University, 1950).

\bibitem{MR1937205}
S.~Thomas,  `The classification problem for torsion-free abelian groups of
  finite rank', \emph{J. Amer. Math. Soc.} \textbf{16} (2003), no.~1, 233--258.

\bibitem{MR2275590}
S.~Thomas,  `Borel superrigidity and the classification problem for the
  torsion-free abelian groups of finite rank', in \emph{International
  {C}ongress of {M}athematicians. {V}ol. {II}}, pp.~93--116 (Eur. Math. Soc.,
  Z\"urich, 2006).

\bibitem{ward}
T.~Ward, \emph{The entropy of automorphisms of solenoidal groups} (Master's
  thesis, University of Warwick, 1986).

\bibitem{MR1307966}
T.~Ward,  `Automorphisms of {$\mathbb Z^d$}-subshifts of finite type',
  \emph{Indag. Math. (N.S.)} \textbf{5} (1994), no.~4, 495--504.

\bibitem{MR1458718}
T.~Ward,  `An uncountable family of group automorphisms, and a typical member',
  \emph{Bull. London Math. Soc.} \textbf{29} (1997), no.~5, 577--584.

\bibitem{MR1619569}
T.~Ward,  `Almost all {$S$}-integer dynamical systems have many periodic
  points', \emph{Ergodic Theory Dynam. Systems} \textbf{18} (1998), no.~2,
  471--486.

\bibitem{MR1631042}
T.~Ward,  `A family of {M}arkov shifts (almost) classified by periodic points',
  \emph{J. Number Theory} \textbf{71} (1998), no.~1, 1--11.

\bibitem{MR1702897}
T.~Ward,  `Dynamical zeta functions for typical extensions of full shifts',
  \emph{Finite Fields Appl.} \textbf{5} (1999), no.~3, 232--239.

\bibitem{MR2085157}
T.~Ward,  `Group automorphisms with few and with many periodic points',
  \emph{Proc. Amer. Math. Soc.} \textbf{133} (2005), no.~1, 91--96.

\bibitem{MR0234930}
A.~Weil, \emph{Basic number theory}, in \emph{Die Grundlehren der
  mathematischen Wissenschaften, Band 144} (Springer-Verlag New York, Inc., New
  York, 1967).

\bibitem{MR2422026}
A.~J. Windsor,  `Smoothness is not an obstruction to realizability',
  \emph{Ergodic Theory Dynam. Systems} \textbf{28} (2008), no.~3, 1037--1041.

\bibitem{MR0214726}
S.~A. Yuzvinski{\u\i},  `Calculation of the entropy of a group-endomorphism',
  \emph{Sibirsk. Mat. \u Z.} \textbf{8} (1967), 230--239.

\end{thebibliography}
\end{document}